\newif\ifrs
\ifrs \usepackage{mathrsfs} \fi  
\newif\ifcol
\newtheorem{theorem*}{Theorem}[section]
\newtheorem{note*}[theorem*]{Note}
\newtheorem{lemma*}[theorem*]{Lemma}
\newtheorem{definition*}[theorem*]{Definition}
\newtheorem{proposition*}[theorem*]{Proposition}
\newtheorem{corollary*}[theorem*]{Corollary}
\newtheorem{remark*}[theorem*]{Remark}
\newtheorem{example*}[theorem*]{Example}
\numberwithin{equation}{section}
\newif\ifcol
\newcommand{\colorr}{\color[rgb]{0.8,0,0}}
\newcommand{\colorn}{\color[rgb]{1,1,1}}
\newcommand{\colorr}{\color{black}}
\newcommand{\colorn}{\color{black}}
\newtheorem{remark}{Remark}
\def\bd{\begin{description}}
\def\ed{\end{description}}
\def\D2{\bbD_{2,\infty-}}
\def\D{{\bf D}}
\def\F{{\bf F}}
\def\K{{\bf K}}
\def\calb{{\cal B}}
\def\cale{{\cal E}}
\def\calf{{\cal F}}
\def\calg{{\cal G}}
\def\call{{\cal L}}
\def\cals{{\cal S}}
\def\half{\frac{1}{2}}
\def\be{\begin{equation}}
\def\ee{\end{equation}}
\def\bea{\begin{eqnarray}}
\def\eea{\end{eqnarray}}
\def\beas{\begin{eqnarray*}}
\def\eeas{\end{eqnarray*}}
\def\bi{\begin{itemize}}
\def\ei{\end{itemize}}
\def\im{\item}
\def\bd{\begin{description}}
\def\ed{\end{description}}
\def\l{\left}
\def\r{\right}
\newcommand{\bbD}{{\mathbb D}}
\newcommand{\bbR}{{\mathbb R}}
\newcommand{\reels}{\mathbb{R}}
\newcommand{\naturels}{\mathbb{N}}
\newcommand{\relatifs}{\mathbb{Z}}
\newcommand{\esp}{\mathbb{E}}
\newcommand{\proba}{\mathbb{P}}
\newcommand{\inv}[1]{\frac{1}{#1}}
\begin{document}

\title{Statistical Inference for Ergodic Point Processes and Application to Limit Order Book}
%

\author{Simon Clinet}
\author{Nakahiro Yoshida}
\affil{Graduate School of Mathematical Sciences, University of Tokyo
\footnote{Graduate School of Mathematical Sciences, University of Tokyo: 3-8-1 Komaba, Meguro-ku, Tokyo 153-8914, Japan. e-mail: simon@ms.u-tokyo.ac.jp, nakahiro@ms.u-tokyo.ac.jp}
        }
\affil{CREST, Japan Science and Technology Agency
        }       

\maketitle
\ \\
%
\begin{abstract} 
We construct a general procedure for the Quasi Likelihood Analysis applied to a multivariate point process on the real half line in an ergodic framework. More precisely, we assume that the stochastic intensity of the underlying model belongs to a family of processes indexed by a finite dimensional parameter. When a particular family of laws of large numbers applies to those processes, we establish the consistency, the asymptotic normality and the convergence of moments of both the Quasi Maximum Likelihood estimator and the Quasi Bayesian estimator. In addition, we illustrate our main results by showing how they can be applied to various Limit Order Book models existing in the literature. In particular, we address the fundamental cases of Markovian models and exponential Hawkes process-based models. 
\end{abstract}  
\ \\
\ \\
{\textbf{Keywords :}} 
Multivariate point process, ergodicity, Hawkes process, inferential statistics, Quasi Likelihood Analysis, Limit Order Book. 
\ \\

\section{Introduction}

Most financial transactions take place nowadays in electronic markets. In the so-called continuous-time double auctions system, traders can freely send buying or selling orders at any price level. As the matching process of those asynchronous orders is rather complex, every submission is centralized in an Electronic Limit Order Book (also denoted by LOB), waiting to be executed according to its price and time priority. An LOB is thus a multidimensional queuing system, that gathers the total volume of non-executed orders for every price level. Essentially, market agents interact with this dynamical system via three elementary rules. They may submit a buying (resp. selling) \textit{limit order} that will increase the size of one queue on the bid (resp. ask) side of the LOB. They also may send a buying (resp. selling) \textit{market order} that will immediately consume the corresponding liquidity at the best available price. Finally they can submit \textit{cancellation orders} to remove one of their latent limit order in the LOB. Driven by these simple mechanisms, the characteristics of the Order Book, such as its mid price, its shape, or the number of orders submitted in a given time window are subject to random fluctuations as time passes. \\  
\medskip

As the macroscopic price movements of an asset are determined by the evolution of its Order Book through time, understanding the stochastic structure of this object is a fundamental issue. It is also a way to describe in a high-dimensional context the microstructure phenomena related to the stock price movements. Indeed, phenomena such as the Epps effect, traditionally represented by an unknown noise process in many studies that confine themselves to one-dimensional price models, are captured when the whole Limit Order Book is taken into account. While the availability of high-frequency financial data has made in-depth analysis of LOB dynamics accessible, it is now possible to use statistical tools in order to highlight either empirical facts about their shapes and their evolution \cite{MikeFarmerEmpirical2008,BouchaudFarnerHowMarkets2008}, or to design and select models that are able to reproduce some of those stylized facts. For the latter, modelling the stochastic behavior of Limit Order Books has been the subject of an intense research over the last decade, the simplest approaches being zero-intelligence models, in which interarrival times of orders are exponentially distributed. The seminal work about this basic representation \cite{SmithFarmerGillemotKrishnamurthy2003} has been followed by many extensions, see e.g. \cite{AbergelLOB2013,ContLarrardDynamics2012,ContStoikovTalrejaStochastic2010,ContLarrardPrice2013,MuniTokeQueuing2013}. Despite their mathematical tractability, such models are too simple to be entirely satisfactory, essentially because of the absence of any structure of dependence in the order submission process. Lately more complex dynamics have been proposed, including Markovian models \cite{RosenbaumQueue2014}, Hawkes process driven bid-ask prices \cite{BacryMuzyHoffmannHawkes2013,ZhengRoueffAbergel2014}, or even Hawkes processes-based LOB models \cite{AbergelHawkes2015,muniToke2010,BacryJaisson2015}, the latter being reputed to efficiently capture the time clustering property of orders, and the cross-dependences in the order flow. \\      
\medskip

The fact that an LOB is mechanically driven by the orders submitted over time has encouraged many authors to see Order Books through the stochastic structure of their interarrival times $(\Delta T_0, \Delta T_1, ...)$ between two successive events. Accordingly, it seems appropriate to describe a Limit Order Book as a high-dimensional point process, that is a process $N$ whose components $N^\alpha_{t} - N^\alpha_{t^{'}} $ count the number of orders of same type, and at the same price level, that have been successively submitted in a time interval $[t,t^{'}]$. In a parametric context, estimating the true parameter $\theta^{*}$ of the model based on the observations is therefore a very general and crucial issue that may take place in (at least) two distinct asymptotics. As, for liquid stocks, a tremendously large number of events happen during short periods of times, the heavy traffic limit (very large number of events on a fixed time window) offers a favorable framework for the construction of consistent estimators. In \cite{YoshidaOgiharaQLA2015}, a sequence of multivariate point processes is thus assumed to be observable on a time interval $[T_0, T_1]$. Under suitable assumptions on the sequence of stochastic intensities itself, it is shown that even in this non-ergodic context it is possible to conduct the so-called Quasi Likelihood Analysis procedure (QLA). In particular, both the Quasi Maximum Likelihood estimator (QMLE) and the Quasi Bayesian estimator (QBE) are consistent, asymptotically mixed normal, and their moments converge.\\
\medskip

In this work, instead, we focus on the long run characteristics of the LOB seen as a point process. While the time parameter $T$ tends to infinity, assuming that the LOB satisfies a suitable ergodicity assumption, we make use of this property to derive the asymptotic properties of the QMLE and the QBE. This problematic is by no means new since the consistency and the asymptotic normality of the Maximum Likelihood estimator (MLE) for ergodic stationary point processes were established a few decades ago in \cite{OgataAsymptotic1978} and \cite{PuriTuanMLE1986}. \cite{tuan1981estimation} has also suggested a different approach based on parameter estimation for the spectral density of the process. In the non-stationary regime, the inhomogeneous Poisson process case has been deeply investigated in \cite{kutoyant1998poisson}, and conditions for the convergence of moments of the MLE and the Bayesian estimator (BE) of a general point process have been stated in \cite{kutoyants1984parameter} (see Theorems 4.5.5 and 4.5.6). Furthermore, Maximum Likelihood estimations have been empirically conducted for the abovementioned models, but the fact that such procedure is consistent is sometimes unclear. As a matter of fact, the term ergodicity itself can be subject to various definitions according to the underlying structure of the process (stationarity, Markovian property, ...). However, all those structures share an essential property, namely a family of laws of large numbers (LLN). Indeed, denoting by $\lambda(t,\theta)$ the stochastic intensity of the point process, we have in many cases

\bea
	\inv{T} \int_0^T{f(\lambda(s,\theta))ds} \to^\proba \pi(f,\theta),
	\label{llnIntro}
\eea
those convergences being the crucial assumptions to determine the asymptotic properties of both the QMLE and the QBE, regardless of the nature of the operator $\pi$. We therefore consider (\ref{llnIntro}) as a general definition of ergodicity in the present paper. We sometimes need to consider slightly more general processes than the simple vector $\lambda(t,\theta)$ in (\ref{llnIntro}) for technical considerations, although this is not necessary in many cases. Needless to say that this almost model-free representation covers thus a wide range of examples, and our main result is the derivation of the full Quasi Likelihood Analysis for point processes in this unified context. Under suitable assumptions that are given in this paper, both the QMLE and the QBE satisfy the following asymptotic properties : \\

\bea
\label{eqConsistency}
\theta_T &\to^\proba& \theta^{*}, \\ \label{eqNormality}
\sqrt T(\theta_T - \theta^{*}) &\to^d& \Gamma^{-\frac{1}{2}}\xi,\\ \label{eqmoments}
\esp[f(\sqrt T(\theta_T - \theta^{*}))] &\to& \esp[f(\Gamma^{-\frac{1}{2}} \xi)] \text{,  } \xi \sim  \mathcal{N},
\eea
where $\Gamma$ is the Fisher information, $\mathcal{N}$ is the standard normal distribution, and $f$ is any continuous function of polynomial growth. Here $\to^\proba$ and $\to^d$ respectively stand for the convergence in probability and the convergence in distribution. Let us give a few comments on the latter result. The convergence (\ref{eqmoments}) is essentially a consequence of the local asymptotic normality (LAN) of our statistical model at point $\theta^{*}$, along with some deviation inequality on the Quasi Likelihood random field. It is also well-known that if those properties hold true uniformly in $\theta^{*}$, then our estimators are asymptotically efficient in the sense of the H{\'a}jek-Le Cam bound. In other words, they attain the minimum of an asymptotic minimax bound for the risk associated to any loss function of polynomial growth. We refer the reader to \cite{hajek1972local} and \cite{ibragimov2013statistical}, Sections II.11-12 and III.1-2 for precise statements of these results. Finally, let us mention that the convergence of moments is very often needed in order to identify the bias correction term for the construction of information criteria. A presentation of information criterion-based model selection can be found in the introduction of \cite{uchida2001information}.  \\
\medskip

From a practical point of view, such general framework provides solid results on two very popular estimators that are commonly used in empirical works. Despite the fact that their properties were partially known in various cases, we believe that this unified view will both consolidate those results by systematically ensuring (\ref{eqConsistency})-(\ref{eqmoments}) and help the practitioner to easily validate the theoretic properties of her estimators when she is confronted with point process regression. Indeed, the central condition for our result is the LLN (\ref{llnIntro}), which can often be naturally derived from the underlying structure of the model she is willing to use. On the other hand, the question of the choice of the model, that also arises in practice is more complex. Consequently, further considerations, such as model selection, mispecification, or inference for more general processes are beyond the scope of this paper but are quickly mentioned in the conclusion. We finally insist on the fact that such general QLA procedure is applicable to any context involving an ergodic point process. Accordingly, the range of applications does not reduce to finance, but extends to diverse fields such as seismology and optics (see the introduction of \cite{kutoyant1998poisson}), or neurobiology (see Examples 2 and 3 in \cite{BremaudStability1996}) to name a few.  \\

\medskip

The paper is organized as follows. We introduce a general ergodic multivariate point process model in Section 2. Section 3 is devoted to show, under a classical approach first, the consistency and the asymptotic normality of the QMLE under suitable regularity conditions. These results are then generalized to the convergence of moments of both the QMLE and the QBE under stronger assumptions. We give some applications and examples that are picked from the literature on Limit Order Books and that satisfy those general conditions in Section 4, and we conclude this paper in Section 5. Finally, we gather some annex proofs and lemmas in Section 6.  \\

\pagebreak
\section{Multivariate point process}

We first introduce the general framework for a parametrized multivariate point process. Let $\calb=(\Omega,\calf,\F,\proba)$, $\F=(\calf_t)_{t\in\bbR_+}$ be a stochastic basis, and assume that we are given a multidimensional point process $N_t = (N_t^\alpha)_{\alpha \in \mathbf{I}}$, $\mathbf{I} = \{1,...,d\}$ that is adapted to $\F$. For example, each $N_t^\alpha$ can be thought as the number of limit (resp. cancellation, market) orders at some given price level $p_\alpha$ that have been submitted in the time interval $[0,t]$. The general definition of a Limit Order Book and the multivariate point process associated to it will be given in Section 4, since no Limit Order Book knowledge is needed to understand this statistical part. The filtration $\F = (\calf_t)_{t \reels_+}$ is generated by the collection of observable processes involved in the structure of $N$. In the most basic case $\F$ is generated solely by $N$ itself and is thus the so-called canonical filtration of the point process, as is the case for the self-exciting Hawkes process as defined in Section 4.2. On the other hand, Examples 3.3, 3.4 and 3.6 along with the LOB models described in Examples 4.2 and 4.3 include additional observed explanatory processes. \\
\medskip

 For the sake of simplicity $N$ is assumed to be defined on $\reels_+$, $N=(N_t)_{t \in \reels_+}$, $N_0=0$, and its components $N^\alpha$ have no common jumps. $N$ can also be seen as an integer measure if we write for any Borel subset $A$, $N(A)= \int_A{dN_s}$. In particular $N_t = N([0,t])$. Understanding the dynamic of a point process on $\reels_+$ can be done, as usual, by studying its underlying stochastic intensity. We recall that if we write $\Lambda_t$ the $\calf_t$-compensator of $N_t$, we call $\lambda$ the (predictable) $\calf_t$-stochastic intensity of $N$, that is the unique (predictable) process such that

\beas
\Lambda_t = \int_0^t{\lambda(s)ds}
\eeas
if such representation exists. Intuitively speaking, $\lambda(t)dt$ is, conditionally to $\calf_t$, the average (multivariate) number of jumps $dN_t$, or in an informal way, $\esp \l[dN_t | \calf_{t} \r] =  \lambda(t)dt$.\\

\medskip

Since $\lambda$ determines the law of the process $N$, the choice of the form of $\lambda$ is an important question. In a parametric context, it is common to allow the stochastic intensity itself to depend on a parameter $\theta$ as follows : for some finite dimensional relatively compact open space $\Theta \subset \reels^n$, $n \geq 1$, we consider the family of adapted (and thus observable) processes $\Lambda_t(\theta) = \int_0^t{\lambda(s,\theta)ds}$. We assume that there exists an unknown \textit{true value} $\theta^{*} \in \Theta$ such that $\lambda(t,\theta^{*})$ is the actual $\calf_t$-intensity of $N_t$. The process $\tilde{N}_t=N_t - \Lambda_t(\theta^{*})$ is thus a local martingale. \\

\medskip

Let us introduce a few terms and notations. All along the paper, if $x$ designates a real number, a vector or a matrix, $|x| = \sum_i |x|_i$. For a vector $x \in \reels^n$, $x^{\otimes 2}$ stands for the product of $x$ and its transpose : $x^{\otimes 2}=x.x^T \in \reels^{n \times n}$.  If $X$ is a random variable, $\|X\|_p = \esp \l[ |X|^p \r]^{\frac{1}{p}}$. If $a_T$ and $b_T$ are sequences of random variables, $a_T = o_\proba(b_T)$ stands for $\frac{a_T}{b_T}1_{\{b_T \neq 0\}} \to^\proba 0$. We also write $a_T = O_\proba(b_T)$ to say that $\frac{a_T}{b_T}1_{\{b_T \neq 0\}}$ is stochastically bounded. For a process $X$, $\calf_t^X = \sigma\{X_s, 0 \leq s \leq t\} $ designates the canonical filtration of $X$. Finally, for a Borel space $(E,\mathbf{B}(E))$, $C_b(E,\reels)$ is the set of continuous, bounded functions from the space $E$ to $\reels$, equipped with their underlying topologies.\\

\medskip

We will work with the following two fundamental conditions on the family $\l(\lambda(t,\theta)\r)_{t \in \reels_+,\theta \in \Theta}$ :

\bd
\im[[A1\!\!]]
The mapping $\lambda : \Omega \times \reels_+ \times \Theta \to \reels_+$ is $\calf \otimes \mathbf{B}(\reels_+) \otimes \mathbf{B}(\Theta)$-measurable. Moreover, almost surely,
  \bd
		\im[{\bf (i)}] for any $\theta \in \Theta$, $s \to \lambda(s,\theta)$ is left continuous.
		\im[{\bf (ii)}] for any $s \in \reels_+$, $\theta \to \lambda(s,\theta)$ is in $C^3(\Theta)$, and admits a continuous extension to $\overline{\Theta}$. 
	\ed

\im[[A2\!\!]]
The intensity processes and their first derivatives satisfy 
	\bd
		\im[(i)] for any $p > 1$, $\sup_{t \in \reels_+}  \sum_{i=0}^3{\left\| \sup_{\theta \in \Theta}\l|\partial_\theta^i \lambda(t,\theta)\r|\right\|_p} <+\infty.$
		\im[(ii)] for any $p > 1$, for any $\alpha \in \mathbf{I}$, $\sup_{t \in \reels_+} \left\| \sup_{\theta \in \Theta}\l|\lambda^\alpha(t,\theta)^{-1}\r| 1_{\{\lambda^\alpha(t,\theta) \neq 0\}}\right\|_p <+\infty.$
		\im[(iii)] For any $\theta \in \Theta$, for any $\alpha \in \mathbf{I}$, $\lambda^\alpha(t,\theta) = 0 $ if and only if  $\lambda^\alpha(t,\theta^{*}) =0$.
	\ed
\ed

\begin{remark}\rm 
 The processes $\sup_{\theta \in \Theta}\partial_\theta^i \lambda(.,\theta)$ are $\calf \otimes \mathbf{B}(\reels_+)$-measurable mappings. Indeed, the continuity in $\theta$ of $\lambda$ ensures that the $\sup$ can be taken over a countable dense subset $\{\theta_i | i \in \naturels\} \subset \Theta$. Note also that allowing intensities to vanish in [A2] is necessary in an LOB framework since in practice cancellation orders can occur at some level $\alpha \in \mathbf{I}$ only if the corresponding limit is not empty.
\end{remark}

 We introduce the Quasi-Log Likelihood process as

\bea
l_T(\theta) = \sum_{\alpha \in \mathbf{I}}{\int_0^T{\text{log}(\lambda^\alpha(s,\theta))dN_s^\alpha}-\sum_{\alpha \in \mathbf{I}}\int_0^T{\lambda^\alpha(s,\theta)ds}}.
\label{eqll}
\eea

Note that although $\lambda^\alpha(s,\theta)$ may vanish, (\ref{eqll}) is well defined as the non-negative process $\int_0^T{1_{\{\lambda^\alpha(s,\theta) = 0\}}dN_s^\alpha}$ verifies 

\bea \esp \left[ \int_0^T{1_{\{\lambda^\alpha(s,\theta) = 0\}} dN_s^\alpha}\right] =\esp \left[ \int_0^T{1_{\{\lambda^\alpha(s,\theta^{*}) = 0\}} \lambda^\alpha(s,\theta^{*})ds}\right] = 0.
\label{eqNoJumps}
\eea

Writing $\Delta N_s^\alpha $ for $ N_s^\alpha - N_{s-}^\alpha$, we notice that $\proba$-a.s., thanks to (\ref{eqNoJumps}), for any $s \in \reels_+$, $1_{\{\lambda^\alpha(s,\theta) = 0\}} \Delta N_s^\alpha = 0$. The previous equality entails the finiteness of $\text{log}(\lambda^\alpha(s,\theta))$ whenever a jump occurs, and therefore the integral in $dN_s^\alpha$ of the Quasi-Log Likelihood process is well-defined. In most situations, this process is indeed the real Log Likelihood process up to a constant term, as is the case when $\calf_t = \calf_t^N$ is the canonical filtration of $N$ (see Theorem 5.45, Chapter III in \cite{JacodLimit2003}).\\ 

\medskip

Finally we say that a statistic $\hat{\theta}_T$ is an asymptotic Quasi-Maximum Likelihood estimator if it asymptotically maximizes the rescaled Quasi-Log Likelihood process in the following sense :

\bea
 \frac{l_T(\hat{\theta}_T)}{T} \geq \sup_{\theta \in \Theta}{\frac{l_T(\theta)}{T}}- o_{\proba}\l(\inv{\sqrt T}\r).
\eea

We also call QMLE or exact QMLE the estimator that maximizes the rescaled Quasi-Log Likelihood. In section 3 we give a general ergodicity assumption, and prove the consistency and the asymptotic normality of any asymptotic QMLE. We finally derive the convergence of moments of both the exact QMLE and the QBE under a stronger version of the ergodicity assumption. A summary of the asymptotic properties of the estimators and the related assumptions can be found in Table \ref{tableSummary}.

\section{Quasi Likelihood Analysis}

\subsection{The ergodicity assumption}

We make an extensive use of the law of large numbers in our proofs. In particular the Fisher information matrix and the asymptotic Quasi-Log Likelihood can be expressed as such time average limits. In the sequel, we refer to the following general definition when we consider ergodic processes :
\begin{definition*}
Let $(E,\mathbf{B}(E))$ be a Borel space, and $X : \Omega \times \reels_+ \to E$ a stochastic process. We say that $X$ is ergodic if there exists a mapping $\pi : C_b(E,\reels) \to \reels $ such that for any $\psi \in C_b(E,\reels)$ 

\beas
\inv{T} \int_0^T{\psi(X_s)ds} \to^\proba \pi(\psi).  
\eeas
\label{defErgo}
\end{definition*}
 The main assumption that is at the core of our results is the following ergodicity condition on the processes $(\lambda^\alpha(.,\theta^{*}),\lambda^\alpha(.,\theta),\partial_\theta \lambda^\alpha(.,\theta))$ taking values in $ E = \reels_+ \times \reels_+ \times \reels^n $ : 

\bd
\im[[A3\!\!]] 
 For any $\alpha \in \mathbf{I}$, $\theta \in \Theta$, the triplet $(\lambda^\alpha(.,\theta^{*}),\lambda^\alpha(.,\theta),\partial_\theta \lambda^\alpha(.,\theta))$ is ergodic in the sense of Definition \ref{defErgo}. In other words, there exists a mapping $\pi_\alpha : C_b(E,\reels) \times \Theta \to \reels$ such that for any  $(\psi,\theta) \in C_b(E,\reels) \times \Theta $, the following convergence holds :
	$$\frac{1}{T}\int_0^T{\psi(\lambda^\alpha(s,\theta^{*}),\lambda^\alpha(s,\theta),\partial_\theta \lambda^\alpha(s,\theta))ds} \to^\proba \pi_\alpha(\psi,\theta). $$

\ed

Every component of the process $(\lambda(.,\theta^{*}),\lambda(.,\theta),\partial_\theta \lambda(.,\theta))$ is therefore assumed to be ergodic. Before we turn to the statement of our main results, let us raise a few examples to illustrate this property in various situations. 

\begin{example*}
\label{ex1}
Assume that $N$ is a univariate inhomogeneous Poisson process, that is a point process whose intensity is a deterministic function. If $t \to \lambda(t,\theta)$ is periodic of period $\tau \in \reels_+ - \{0\}$, then \textnormal{[A3]} holds with $\pi(f,\theta) = \inv{\tau}\int_0^\tau{f(\lambda(s,\theta^{*}),\lambda(s,\theta),\partial_\theta \lambda(s,\theta))ds}$. \textnormal{[A3]} also holds if for $i \in \{0,1\}$, $\partial_\theta^i \lambda(t,\theta) \to \partial_\theta^i\lambda_\infty(\theta)$ as $t \to \infty$ with $\pi(f,\theta) = f(\lambda_\infty(\theta^{*}),\lambda_\infty(\theta),\partial_\theta \lambda_\infty(\theta))$.
\end{example*} 

\begin{example*}
Assume that $N$ counts the jumps of an observable Pure Jump-Type Markovian process $Y$ on a discrete state space (see \cite{KallenbergFoundation2002}, chapter 12). It is a straightforward consequence of the definition of $Y$ and $N$ that the $\calf_t^Y$-stochastic intensity has the representation $\lambda(t,\theta) = g(Y_{t-},\theta)$ for some function $g$. Let us assume further that $Y$ is ergodic (see \cite{MeynTweedieprocessii1993} for a presentation of ergodicity for Markovian processes). If $Y$ takes values in $\cal Y$ and has invariant probability $\pi^Y$, \textnormal{[A3]} holds with 
$\pi(f,\theta) = \int_{\cal Y}{f(g(y,\theta^{*}),g(y,\theta),\partial_\theta g(y,\theta))\pi^Y(dy)}$. 
\label{purejumpex}
\end{example*}

\begin{example*}
\label{ex2}
Let us assume that $N$ is a univariate Cox process directed by some observable left-continuous ergodic Markovian process $Y$ (a rigourous definition of a Cox process can be found in \cite{KallenbergFoundation2002}, chapter 12). Accordingly, assume that the $\calf_t^{(N,Y)}$-intensity has the form $\lambda(t,\theta) = g(Y_{t},\theta)$ for some continuous function $g$. The same conclusion as in Example \ref{purejumpex} holds.
\end{example*}

\begin{example*}
\label{ex3}
Consider a univariate stationary point process $N$, that is a point process such that for any $r \in \naturels$ and any bounded Borel subsets $A_1,...,A_r$, the distribution of $(N(A_1+t),...,N(A_r+t))$ does not depend on $t$. The $\calf_t^N$-intensity of $N$ is thus automatically a stationary process. Let us assume that $N$ is stationary ergodic, meaning that its invariant $\sigma$-field is trivial. It is straightforward to see that \textnormal{[A3]} holds with $\pi(f,\theta) = \esp \l[f(\lambda(0,\theta^{*}),\lambda(0,\theta),\partial_\theta \lambda(0,\theta)) \r]$, by virtue of Birkhoff's ergodic Theorem.   
\end{example*}

\begin{example*}
\label{ex4}
Take the same representation as in Example \ref{ex2}, but instead of the Markovian property, assume that the process $(Y_t)_t$ has continuous paths and enjoys the regenerative property (see \cite{AsmussenAppliedProbability2003} chapter VI). Call $\tau_1,\tau_2,...$ the (random) regenerative times of $Y$ and assume that $\mu = \esp[\tau_1] < \infty$. Then the ergodicity condition \textnormal{[A3]} is satisfied with $\pi(f,\theta) = \inv{\mu} \esp\int_0^{\tau_1}{f(g(Y_s,\theta^{*}),g(Y_s,\theta),\partial_\theta g(Y_s,\theta))ds}$.
\end{example*}

We will present examples that are directly related to LOB modelling in Section 4. Before we turn to the classical approach, let us state a few useful results about the mapping $\pi$ and the class of functions that satisfy [A3]. Such extension is necessary because variables of interest such as the Fisher information are of the form $\pi(f,\theta)$ for unbounded functions $f$ that have singularities when one of their argument vanishes. Using [A2], it is indeed possible to extend the range of functions for which [A3] holds, and show that each $\pi_\alpha(\psi,\theta)$ can be seen as the integral of $\psi$ with respect to an actual probability measure $\pi_\alpha^\theta$ on $E$.

\begin{definition*}
Recall that $E = \reels_+ \times \reels_+ \times \reels^d$. We write $C_\uparrow(E,\reels)$ the set of functions $\psi : (u,v,w) \to \psi(u,v,w)$ from $E$ to $\reels$ that satisfy :
\begin{itemize}
\im $\psi$ is continuous on $\l(\reels_+ - \{0\} \r) \times \l(\reels_+ - \{0\} \r)\times \reels^d$.
\im $\psi$ is of polynomial growth in $(u,v,w,\frac{1_{\{u >0\}}}{u},\frac{1_{\{v >0\}}}{v})$.
\im For any $(u,v,w) \in E$, $\psi(0,v,w) = \psi(u,0,w) = 0$.
\end{itemize}

\end{definition*}

\begin{proposition*}
\label{extensionA3}
 For a measure $\mu$, let $\mathbb{L}^1(\mu)$ be the space of functions that are integrable with respect to $\mu$. Then for any $\theta \in \Theta$ and for any $\alpha \in \mathbf{I}$, the following properties hold.

\bd
\im [(i)]The law of large numbers stated in \textnormal{[A3]} still holds for any $\psi \in C_\uparrow(E,\reels)$. In particular, the mapping $\pi_\alpha(.,\theta)$ can be extended to $C_\uparrow(E,\reels)$. Moreover, for any $\psi \in C_\uparrow(E,\reels)$ the convergence is uniform in $\theta$.
\im [(ii)] There exists a probability measure $\pi_\alpha^\theta$ on $(E, \mathbf{B}(E))$ such that for any $\psi \in C_\uparrow(E,\reels)$, $\pi_\alpha(\psi,\theta) = \int_{E}{\psi(x)\pi_\alpha^\theta(dx)}$. In particular, $C_\uparrow(E,\reels) \subset \mathbb{L}^1(\pi_\alpha^\theta)$. 
\ed
\end{proposition*}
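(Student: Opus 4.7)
My plan is to extend the law of large numbers in [A3] from $C_b(E,\reels)$ to $C_\uparrow(E,\reels)$ by a truncation argument controlled by [A2], and then to deduce (ii) from the Riesz--Markov representation theorem together with tightness. Throughout I write $X_s(\theta) := (\lambda^\alpha(s,\theta^{*}), \lambda^\alpha(s,\theta), \partial_\theta \lambda^\alpha(s,\theta))$ and $F_T(\theta) := T^{-1}\int_0^T \psi(X_s(\theta))\,ds$. First I would construct continuous cutoffs $\chi_M : E \to [0,1]$ supported in $\{1/(2M)\leq u,v\leq 2M,\ |w|\leq 2M\}$ and equal to $1$ on $K_M := \{1/M \leq u,v\leq M,\ |w|\leq M\}$. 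For $\psi \in C_\uparrow$, set $\psi_M := \psi \chi_M$; since $\psi$ is continuous on the interior of $E$ and vanishes at the coordinate axes while $\chi_M$ is compactly supported away from these axes, $\psi_M \in C_b(E,\reels)$ and $\psi_M \to \psi$ pointwise.

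The key estimate is a uniform control of the truncation error. Polynomial growth provides constants $C,q$ with $|\psi(x)|\leq C P_q(x)$ where $P_q(u,v,w) := 1+u^q+v^q+|w|^q+1_{\{u>0\}}u^{-q}+1_{\{v>0\}}v^{-q}$, and combining this with [A2](i)--(ii) yields $\sup_{s,\theta}\esp[P_q(X_s(\theta))^r]<\infty$ for every $r\geq 1$. Cauchy--Schwarz and Markov then give
\begin{equation*}
\sup_{s,\theta}\esp|(\psi-\psi_M)(X_s(\theta))| \leq C_q \sup_{s,\theta}\proba(X_s(\theta)\notin K_M)^{1/2} \xrightarrow[M\to\infty]{} 0,
\end{equation*}
so the remainder $R_{T,M}(\theta) := T^{-1}\int_0^T(\psi-\psi_M)(X_s(\theta))\,ds$ satisfies $\sup_T\sup_\theta \esp|R_{T,M}(\theta)| \to 0$. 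Since $\psi_M \in C_b(E,\reels)$, [A3] delivers $T^{-1}\int_0^T \psi_M(X_s(\theta))\,ds \to^\proba \pi_\alpha(\psi_M,\theta)$, hence $(\pi_\alpha(\psi_M,\theta))_M$ is Cauchy and I define $\pi_\alpha(\psi,\theta)$ as its limit. A standard three-epsilon argument then produces pointwise convergence $F_T(\theta)\to^\proba \pi_\alpha(\psi,\theta)$.

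To upgrade to uniform convergence in $\theta$, I would combine this pointwise convergence with equicontinuity of the random fields $\theta\to F_T(\theta)$. The regularity [A1](ii) and moment bounds [A2](i) imply $|X_s(\theta)-X_s(\theta')| \leq L_s |\theta-\theta'|$ with $\sup_s \|L_s\|_r < \infty$ for every $r$; combined with the uniform continuity of $\psi$ on each compact $K_M$ and the tightness $\sup_{s,\theta}\proba(X_s(\theta)\notin K_M)\to 0$ already established, this yields $\esp \sup_{|\theta-\theta'|<\delta}|F_T(\theta)-F_T(\theta')|\to 0$ as $\delta\to 0$, uniformly in $T$. The same argument applied to $\psi_M$ first shows that $\pi_\alpha(\psi_M,\cdot)$ is continuous, and then uniform convergence in $M$ (via the bound on $\sup_T\sup_\theta \esp|R_{T,M}(\theta)|$) transfers continuity to $\pi_\alpha(\psi,\cdot)$. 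Covering the relatively compact $\Theta$ by finitely many balls $B(\theta_j,\delta)$ and invoking pointwise convergence at the $\theta_j$ together with equicontinuity and continuity of the limit then lifts to uniform convergence in probability, completing (i).

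For (ii), $\psi\to\pi_\alpha(\psi,\theta)$ is a positive linear functional on $C_b(E,\reels)$ with $\pi_\alpha(1,\theta)=1$, and the bound $\pi_\alpha(1-\chi_M,\theta) \leq \sup_{s,\theta}\esp(1-\chi_M)(X_s(\theta))\to 0$ from the moment estimate yields tightness. Applying the Riesz--Markov theorem to the restriction to $C_0(E,\reels)$ and extending by tightness produces a Radon probability measure $\pi_\alpha^\theta$ on $(E,\mathbf{B}(E))$ such that $\pi_\alpha(\psi,\theta)=\int \psi\,d\pi_\alpha^\theta$ for all $\psi \in C_b$. Fatou's lemma applied to $\chi_M P_q$ gives $\int P_q\,d\pi_\alpha^\theta \leq \sup_{s,\theta}\esp P_q(X_s(\theta)) < \infty$, so $C_\uparrow \subset \mathbb{L}^1(\pi_\alpha^\theta)$, and the representation for $\psi\in C_\uparrow$ follows by dominated convergence from $\psi_M \to \psi$. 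The main obstacle will be the uniformity in $\theta$ in (i): pointwise convergence falls out straightforwardly from truncation and [A3], but upgrading to uniform convergence requires the equicontinuity argument above, which must juggle the interior continuity of $\psi$, the boundary vanishing, and the uniform tightness furnished by [A2].
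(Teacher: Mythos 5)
Your proof is correct and follows essentially the same route as the paper's: truncate to reduce to $C_b(E,\reels)$, control the truncation error uniformly in $s$ and $\theta$ via the moment bounds of [A2], prove $\mathbb{L}^1$-equicontinuity in $\theta$ to upgrade pointwise to uniform convergence, and obtain the representing measure from tightness of the family $(X_s^\alpha(\theta))$. Two implementation details differ, both harmless: the paper truncates by capping the value ($\psi\wedge\delta$) and treats the singularity at the axes separately by a monotone approximation $\psi_\eta\uparrow\psi$ built from lower semi-continuity, whereas your single multiplicative cutoff $\chi_M$ removes the growth and the singularity in one step; and the paper constructs $\pi_\alpha^\theta$ as a weak subsequential limit of the occupation measures $\pi_{T,\alpha}^\theta(A)=T^{-1}\int_0^T\proba[X_s^\alpha(\theta)\in A]\,ds$, which identifies the measure concretely, while you invoke Riesz--Markov on $C_0$ plus tightness. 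One displayed estimate needs a small repair: since the framework explicitly allows $\lambda^\alpha(s,\theta)=0$ with positive probability, $\proba(X_s(\theta)\notin K_M)$ need not tend to $0$ as $M\to\infty$; the Cauchy--Schwarz step must be carried out on the event $\{u>0,\,v>0\}$ --- which is where $\psi-\psi_M$ is actually supported, because $\psi$ vanishes on the axes by definition of $C_\uparrow(E,\reels)$ --- after which Markov's inequality applied to $u^{-1}1_{\{u>0\}}$ and $v^{-1}1_{\{v>0\}}$ together with [A2](ii) does give the claimed uniform decay.
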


We are now ready to investigate the asymptotic properties of our estimators.


\subsection{A classical approach for the QMLE}

From now on we adopt a martingale approach to derive the consistency and the asymptotic normality of any asymptotic QMLE $\hat{\theta}_T$. The main results are stated in Theorem \ref{thmConsistency} and Theorem \ref{thmAsymptoticNormality}. Consider 
\beas
\mathbb{Y}_T(\theta) = \frac{1}{T}(l_T(\theta)-l_T(\theta^{*}))
\eeas
and the asymptotic rescaled Quasi-Log Likelihood 
\bea
\mathbb{Y}(\theta)  = \sum_{\alpha \in \mathbf{I}} \int_{(u,v,w) \in E}{1_{\{u > 0,　 v > 0 \}}\l\{\text{log} \l(\frac{v}{u}\r)u - (v-u)\r\}\pi_\alpha^\theta(du,dv,dw)}.
\eea
As we shall see in this section, $\mathbb{Y}(\theta)$ is the limit of $\mathbb{Y}_T(\theta)$ uniformly in $\theta \in \Theta$. The non-degeneracy of $\mathbb{Y}$ is as usual a crucial point to derive the consistency of our estimator. One possible formulation is the following :

\bd
\im[[A4\!\!]] For any $\theta \in \Theta - \{\theta^{*}\}$, $\mathbb{Y}(\theta) \neq 0$.
\ed


\begin{theorem*}
	Under \textnormal{[A1]-[A4]}, any asymptotic QMLE $\hat{\theta}_T$ is consistent.
		\beas
		\hat{\theta}_T \to^\proba \theta^{*}.
		\eeas
	\label{thmConsistency}
\end{theorem*}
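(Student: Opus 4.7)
The plan is to follow the classical argmax approach for M-estimators in three steps: (i) decompose $\mathbb{Y}_T(\theta) = M_T(\theta) + A_T(\theta)$ by splitting $dN_s^\alpha = d\tilde{N}_s^\alpha + \lambda^\alpha(s,\theta^{*})ds$, where $\tilde{N}^\alpha$ denotes the compensated local martingale; (ii) establish $M_T(\theta) \to 0$ and $A_T(\theta) \to \mathbb{Y}(\theta)$ uniformly in $\theta \in \Theta$ in probability; (iii) exploit the strict maximum of $\mathbb{Y}$ at $\theta^{*}$ granted by [A4] together with the near-optimality of $\hat{\theta}_T$ to conclude by contradiction.

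For the drift part one finds
\[
A_T(\theta) \;=\; \frac{1}{T}\sum_{\alpha \in \mathbf{I}} \int_0^T \tilde{\psi}_\alpha\bigl(\lambda^\alpha(s,\theta^{*}),\lambda^\alpha(s,\theta)\bigr)\, ds,
\]
with $\tilde{\psi}_\alpha(u,v) := 1_{\{u>0,\,v>0\}}\bigl(u\log(v/u) - (v-u)\bigr)$. Assumption [A2](iii) ensures that $\lambda^\alpha(\cdot,\theta)$ and $\lambda^\alpha(\cdot,\theta^{*})$ vanish on exactly the same set, so the indicator correctly removes the boundary and the integrand is well defined. Viewed as a function on $E$ not depending on $w$, $\tilde{\psi}_\alpha$ lies in $C_\uparrow(E,\reels)$: it is continuous on $(\reels_+-\{0\})^2$, vanishes on the coordinate axes, and the slow growth of the logarithm makes it polynomial in $(u,v,1/u,1/v)$. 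Proposition \ref{extensionA3}(i) then yields $A_T(\theta) \to \mathbb{Y}(\theta)$ uniformly in $\theta \in \Theta$. The elementary inequality $u\log(v/u) \le v-u$ for $u,v > 0$, with equality iff $u=v$, gives $\tilde{\psi}_\alpha \le 0$, hence $\mathbb{Y}(\theta) \le 0 = \mathbb{Y}(\theta^{*})$ for every $\theta$.

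For the martingale part $M_T(\theta) = \tfrac{1}{T}\sum_\alpha \int_0^T \log\bigl(\lambda^\alpha(s,\theta)/\lambda^\alpha(s,\theta^{*})\bigr)\, d\tilde{N}_s^\alpha$, the Burkholder-Davis-Gundy inequality combined with the moment bounds in [A2](i)-(ii) and the logarithmic growth of the integrand yields $\|M_T(\theta)\|_p = O(T^{-1/2})$ at each fixed $\theta$. Differentiating in $\theta$ produces an analogous martingale whose $L^p$ norm is controlled via the bounds on $\partial_\theta \lambda^\alpha$, so a Sobolev-type embedding on the relatively compact set $\Theta \subset \reels^n$ (equivalently, Kolmogorov's tightness criterion on $C(\overline{\Theta})$) upgrades the pointwise bound to $\sup_{\theta \in \Theta}|M_T(\theta)| \to^\proba 0$.

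Combining both pieces gives $\mathbb{Y}_T \to \mathbb{Y}$ uniformly in probability, and $\mathbb{Y}$ inherits continuity on $\overline{\Theta}$ from [A1] and Proposition \ref{extensionA3}. By [A4] together with $\mathbb{Y} \le 0 = \mathbb{Y}(\theta^{*})$ the maximum is strict, so compactness of $\overline{\Theta}$ provides $\delta_\epsilon := -\sup\{\mathbb{Y}(\theta):\theta \in \overline{\Theta},\,|\theta-\theta^{*}|\ge \epsilon\} > 0$ for every $\epsilon > 0$. The defining inequality for $\hat{\theta}_T$ yields $\mathbb{Y}_T(\hat{\theta}_T) \ge \mathbb{Y}_T(\theta^{*}) - o_\proba(T^{-1/2}) = o_\proba(1)$, whereas on $\{|\hat{\theta}_T - \theta^{*}|\ge\epsilon\}$ uniform convergence gives $\mathbb{Y}_T(\hat{\theta}_T) \le -\delta_\epsilon + o_\proba(1)$; the resulting contradiction on an event of probability tending to one delivers $\hat{\theta}_T \to^\proba \theta^{*}$. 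The most delicate step is the uniform-in-$\theta$ convergence of $M_T$: one must combine the integrability of [A2] with control of the modulus of continuity in $\theta$ through the derivative bounds, rather than relying on the pointwise law of large numbers alone.
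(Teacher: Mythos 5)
Your proposal is correct and follows essentially the same route as the paper: the same martingale/drift decomposition of $\mathbb{Y}_T$, control of the martingale term via Burkholder--Davis--Gundy plus a Sobolev embedding to get uniformity in $\theta$, identification of the drift limit through Proposition \ref{extensionA3} applied to the function $(u,v)\mapsto 1_{\{u>0,v>0\}}(u\log(v/u)-(v-u))\in C_\uparrow(E,\reels)$, and the standard argmax argument using $\mathbb{Y}\le 0=\mathbb{Y}(\theta^*)$ together with [A4]. Your write-up of the final compactness/contradiction step is in fact slightly more explicit than the paper's, which simply asserts that the uniform convergence and the strict maximum imply consistency.
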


We first deal with the uniform convergence of $\mathbb{Y}_T$ to $\mathbb{Y}$.

\begin{lemma*}
Under \textnormal{[A1]-[A3]},
 
$$ \sup_{\theta \in \Theta}{|\mathbb{Y}_T(\theta)-\mathbb{Y}(\theta)|} \to^\proba 0.$$

\end{lemma*}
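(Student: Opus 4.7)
The plan is to decompose $T\mathbb{Y}_T(\theta) = l_T(\theta) - l_T(\theta^{*})$ into a local martingale plus a drift by substituting $dN^\alpha_s = d\tilde{N}^\alpha_s + \lambda^\alpha(s,\theta^{*})\,ds$ into the $dN$-integral appearing in $l_T$. Thanks to \textnormal{[A2]}(iii) the sets $\{\lambda^\alpha(\cdot,\theta^{*})>0\}$ and $\{\lambda^\alpha(\cdot,\theta)>0\}$ coincide, so the logarithmic singularity is guarded by a common indicator and one obtains
\begin{align*}
\mathbb{Y}_T(\theta) =\;& \frac{1}{T}\sum_{\alpha \in \mathbf{I}} \int_0^T \log\!\left(\frac{\lambda^\alpha(s,\theta)}{\lambda^\alpha(s,\theta^{*})}\right) 1_{\{\lambda^\alpha(s,\theta^{*})>0\}}\, d\tilde{N}^\alpha_s \\
& + \frac{1}{T}\sum_{\alpha \in \mathbf{I}} \int_0^T \psi\bigl(\lambda^\alpha(s,\theta^{*}),\lambda^\alpha(s,\theta),\partial_\theta\lambda^\alpha(s,\theta)\bigr)\,ds,
\end{align*}
where $\psi(u,v,w) = 1_{\{u>0,v>0\}}\bigl(u\log(v/u) - (v-u)\bigr)$. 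I denote these two pieces $M_T(\theta)$ and $R_T(\theta)$; observe that $\mathbb{Y}(\theta) = \sum_\alpha \pi_\alpha(\psi,\theta)$ by construction.

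For the drift term $R_T(\theta)$, the work is to verify that $\psi \in C_\uparrow(E,\reels)$: it is continuous on $(\reels_+\setminus\{0\})^2 \times \reels^n$, vanishes on $\{u=0\}\cup\{v=0\}$ by construction, and using the elementary bound $|\log x| \le C_\varepsilon(x^\varepsilon + x^{-\varepsilon})$ for arbitrarily small $\varepsilon > 0$ both $u\log(v/u)$ and $v-u$ are of polynomial growth in $(u,v,1/u,1/v)$. Proposition \ref{extensionA3}(i) then directly yields $\sup_{\theta\in\Theta}|R_T(\theta) - \mathbb{Y}(\theta)| \to^\proba 0$, which is the easier half.

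The remaining step is $\sup_{\theta\in\Theta}|M_T(\theta)| \to^\proba 0$. I would apply the standard Sobolev embedding trick on the relatively compact $\Theta \subset \reels^n$: for $p$ an even integer with $p > n$,
\begin{equation*}
\esp\Bigl[\sup_{\theta\in\Theta}|M_T(\theta)|^p\Bigr] \;\le\; C\int_{\Theta}\bigl(\esp|M_T(\theta)|^p + \esp|\partial_\theta M_T(\theta)|^p\bigr)\,d\theta.
\end{equation*}
Both $M_T(\theta)$ and
\begin{equation*}
\partial_\theta M_T(\theta) = \frac{1}{T}\sum_\alpha \int_0^T \frac{\partial_\theta \lambda^\alpha(s,\theta)}{\lambda^\alpha(s,\theta)} 1_{\{\lambda^\alpha(s,\theta^{*})>0\}}\, d\tilde{N}^\alpha_s
\end{equation*}
are local martingales. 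Applying Burkholder--Davis--Gundy together with Jensen's inequality to pull the $p/2$-th power inside the predictable quadratic variation, and invoking the uniform-in-$s$ moment bounds of \textnormal{[A2]}(i)--(ii) on the integrands $\log^2(\lambda^\alpha(s,\theta)/\lambda^\alpha(s,\theta^{*}))\lambda^\alpha(s,\theta^{*})$ and $|\partial_\theta\lambda^\alpha/\lambda^\alpha|^2 \lambda^\alpha(s,\theta^{*})$, one obtains $\esp|M_T(\theta)|^p + \esp|\partial_\theta M_T(\theta)|^p = O(T^{-p/2})$ uniformly in $\theta \in \Theta$. Hence $\sup_\theta|M_T(\theta)| \to 0$ in $L^p$, and in particular in probability, yielding the lemma.

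The main technical obstacle is the uniform control of the logarithmic singularities: both the drift analysis (checking $\psi \in C_\uparrow$) and the martingale analysis (bounding the $p$-th moments of the quadratic-variation integrands) rely on converting $|\log x|$ into $x^\varepsilon + x^{-\varepsilon}$, after which everything reduces to the polynomial framework already provided by \textnormal{[A2]}(i)--(ii) combined with H\"older's inequality.
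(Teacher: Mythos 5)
Your proposal is correct and follows essentially the same route as the paper: the same martingale/drift decomposition of $\mathbb{Y}_T$, Sobolev's inequality plus Burkholder--Davis--Gundy and Jensen for the martingale part, and Proposition \ref{extensionA3} applied to $\psi(u,v,w)=1_{\{u>0,v>0\}}(u\log(v/u)-(v-u))$ for the drift part. Your explicit verification that $\psi\in C_\uparrow(E,\reels)$ and the $|\log x|\le C_\varepsilon(x^\varepsilon+x^{-\varepsilon})$ device are just slightly more detailed versions of what the paper leaves implicit (the only omitted point being the justification of interchanging $\partial_\theta$ with the stochastic integral, which the paper delegates to Lemma \ref{deriv}).
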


\begin{proof}

	Define as previously the following local martingale :
		$$\tilde{N}_t^\alpha = N_t^\alpha - \int_0^t{\lambda^\alpha(s,\theta^{*})ds}.$$
	
		We rewrite $\mathbb{Y}_T(\theta)$ as 
		\begin{eqnarray*}
		\mathbb{Y}_T(\theta) &=& \frac{1}{T}\sum_{\alpha \in \mathbf{I}}{\int_0^T{\text{log}\frac{\lambda^\alpha(s,\theta)}{\lambda^\alpha(s,\theta^{*})}1_{\{\lambda^\alpha(s,\theta^{*}) \neq 0\}}d\tilde{N}_s^\alpha}}\\
		&-&\frac{1}{T}\sum_{\alpha \in \mathbf{I}}{\int_0^T{\left[\lambda^\alpha(s,\theta)-\lambda^\alpha(s,\theta^{*})-\text{log}\frac{\lambda^ \alpha(s,\theta)}{\lambda^\alpha(s,\theta^{*})}\lambda^\alpha(s,\theta^{*})\right]1_{\{\lambda^\alpha(s,\theta^{*}) \neq 0\}}ds}},\\
		\end{eqnarray*}
		and for $\alpha \in \mathbf{I}$ we put 
		
		  $$M_T^{\alpha}(\theta) = \int_0^T{\text{log}\frac{\lambda^\alpha(s,\theta)}{\lambda^\alpha(s,\theta^{*})}1_{\{\lambda^\alpha(s,\theta^{*}) \neq 0\}}d\tilde{N}_s^\alpha}$$
			and 
			
			$$V_T^{\alpha}(\theta) = -\frac{1}{T}\int_0^T{\left[\lambda^\alpha(s,\theta)-\lambda^\alpha(s,\theta^{*})-\text{log}\frac{\lambda^\alpha(s,\theta)}{\lambda^\alpha(s,\theta^{*})}\lambda^\alpha(s,\theta^{*})\right]1_{\{\lambda^\alpha(s,\theta^{*}) \neq 0\}}ds}.$$
			
			
			
			Let us first show that the martingale term $M_T^\alpha(\theta)$ tends to zero uniformly in $\theta$. Thanks to [A2], $M_T^\alpha(\theta)$ is a well-defined $\mathbb{L}^p$ integrable martingale. We apply Sobolev's inequality (see e.g. \cite{KallenbergFoundation2002}, Theorem 4.2, Part I, case A, with $j=0$, $m=1$, and any $p>n$). We thus take some integer $p > n$ and some constant $K(\Theta,p)$ such that 
				
				\beas
					\esp \l|\sup_{\theta \in \Theta} \frac{M_T^\alpha(\theta)}{T} \r|^p &\leq& \frac{K(\Theta,p)}{T^p} \l(\int_{\Theta}{d\theta \esp\l|M_T^\alpha(\theta)\r|^p}+\int_{\Theta}{d\theta \esp\l|\partial_\theta M_T^\alpha(\theta)\r|^p} \r). 
				\eeas
				
				Now, applying successively Davis-Burkholder-Gundy's inequality, Jensen's inequality and assumption [A2] it is straighforward to see that for some constant $C >0$
				
				\beas
				\esp[(M_T^\alpha(\theta))^p] &\leq& C \esp \l(\int_0^T{\l(\text{log}\frac{\lambda^\alpha(s,\theta)}{\lambda^\alpha(s,\theta^{*})}\r)^2\lambda^\alpha(s,\theta^{*})1_{\{\lambda^\alpha(s,\theta^{*}) \neq 0\}}ds} \r)^{\frac{p}{2}} \\
				&\leq& C T^{\frac{p}{2}-1} \esp \int_0^T{\l(\text{log}\frac{\lambda^\alpha(s,\theta)}{\lambda^\alpha(s,\theta^{*})}\r)^p\l(\lambda^\alpha(s,\theta^{*}) \r)^{\frac{p}{2}}1_{\{\lambda^\alpha(s,\theta^{*}) \neq 0\}}ds} \\
				&=& O\l(T^{\frac{p}{2}}\r), 
				\eeas

				and 
				\beas 
				\esp[ (\partial_\theta M_T^\alpha(\theta))^p] &\leq& C\esp \l(\int_0^T{\l(\frac{\partial_\theta\lambda^\alpha(s,\theta)}{\lambda^\alpha(s,\theta)}\r)^2\lambda^\alpha(s,\theta^{*})1_{\{\lambda^\alpha(s,\theta^{*}) \neq 0\}}ds}\r)^{\frac{p}{2}}\\
				&\leq&C T^{\frac{p}{2}-1}\esp \int_0^T{\l(\frac{\partial_\theta\lambda^\alpha(s,\theta)}{\lambda^\alpha(s,\theta)}\r)^p\l(\lambda^\alpha(s,\theta^{*})\r)^{\frac{p}{2}}1_{\{\lambda^\alpha(s,\theta^{*}) \neq 0\}}ds}\\
				  &=& O\l(T^{\frac{p}{2}}\r),
				\eeas
				where the permutation of the symbol $\partial_\theta$ and $\int_0^T$ is permitted by Lemma \ref{deriv}. Hence $\esp \l|\sup_{\theta \in \Theta} \frac{M_T^\alpha}{T} \r|^p \to 0$. Finally, thanks to Proposition \ref{extensionA3},  $V_T(\theta) = \sum_{\alpha \in \mathbf{I}}V_T^\alpha(\theta)$ converges in probability to $\mathbb{Y}(\theta)$ uniformly in $\theta$. Thus 
				
				\beas
				 \sup_{\theta \in \Theta}{|\mathbb{Y}_T(\theta)-\mathbb{Y}(\theta)|} \to^\proba 0.
				\eeas

\end{proof}

\begin{remark}\rm
The Sobolev's inequality depends on regularity properties of the domain $\Theta$, usually expressed as geometric conditions. Such conditions can be found in \cite{adams2003sobolev}. We will assume the following sufficient condition that is used in \cite{YoshidaOgiharaQLA2015} as well : $\inf_{\theta \in \Theta}\text{Leb}(B(\theta,\epsilon) \cap \Theta) \geq a_0(\epsilon^n \wedge 1)$ for any $\epsilon > 0$, where $a_0$ is some positive constant,  $B(\theta,\epsilon)$ is the open ball centered on $\theta$ with diameter $\epsilon$, and Leb is the Lebesgue measure.  
\end{remark}

Finally the consistency easily follows :

\begin{proof}[Proof of Theorem \ref{thmConsistency}]
	from the expression 
	\beas
\mathbb{Y}(\theta)  = \sum_{\alpha \in \mathbf{I}} \int_{(u,v,w) \in E}{1_{\{u > 0,　 v > 0 \}}\l\{\text{log} \l(\frac{v}{u}\r)u - (v-u)\r\}\pi_\alpha^\theta(du,dv,dw)},
	\eeas
	we immediately deduce that $\mathbb{Y} \leq 0$ and $\mathbb{Y}(\theta^{*}) = 0$. By [A4], $\theta^{*}$ is thus a global maximum of $\mathbb{Y}$, and by the previous lemma this ensures the consistency of any asymptotic QMLE. 
\end{proof}

We now turn to the asymptotic normality of $\hat{\theta}_T$. Using again [A2] and a variant of Lemma \ref{deriv}, we define for any $\theta \in \Theta$, 

\beas 
\partial_\theta l_T(\theta) &=& \sum_{\alpha \in \mathbf{I}}{\int_0^T{\lambda^\alpha(s,\theta)^{-1}\partial_\theta \lambda^\alpha(s,\theta)1_{\{\lambda^\alpha(s,\theta^{*}) \neq 0\}}dN_s^\alpha}}-\sum_{\alpha \in \mathbf{I}}\int_0^T{\partial_\theta\lambda^\alpha(s,\theta)ds},
\eeas 
which, evaluated at point $\theta^*$, has the form
\beas
\partial_\theta l_T(\theta^{*}) &=& \sum_{\alpha \in \mathbf{I}}{\int_0^T{\lambda^\alpha(s,\theta^{*})^{-1}\partial_\theta \lambda^\alpha(s,\theta^{*})1_{\{\lambda^\alpha(s,\theta^{*}) \neq 0\}}d\tilde{N}_s^\alpha}}, 
\eeas
and 
\beas
\partial_{\theta}^2 l_T(\theta) &=&\sum_{\alpha \in \mathbf{I}}{\int_0^T{\partial_\theta\left(\lambda^\alpha(s,\theta)^{-1}\partial_\theta \lambda^\alpha(s,\theta)\right)1_{\{\lambda^\alpha(s,\theta^{*}) \neq 0\}}d\tilde{N}_t^\alpha}}\\
&-&\sum_{\alpha \in \mathbf{I}}{\int_0^T{(\partial_\theta \lambda^\alpha)^{\otimes 2}(s,\theta)\lambda^\alpha(s,\theta)^{-2}\lambda^\alpha(s,\theta^{*})1_{\{\lambda^\alpha(s,\theta^{*}) \neq 0\}}ds}}\\
&+&\sum_{\alpha \in \mathbf{I}}{\int_0^T{\partial_{\theta}^2 \lambda^\alpha(s,\theta)\lambda^\alpha(s,\theta)^{-1}(\lambda^\alpha(s,\theta)-\lambda^\alpha(s,\theta^{*}))}1_{\{\lambda^\alpha(s,\theta^{*}) \neq 0\}}ds}.\\
\eeas

Finally consider the Fisher information matrix 
\bea
\Gamma = \sum_{\alpha \in \mathbf{I}} \int_{(u,v,w) \in E}{w^{\otimes 2}\frac{1_{\{u >0\}}}{u} \pi_\alpha^{\theta^*}(du,dv,dw)} \in \reels^{n \times n}.
\eea

If $\Gamma$ is not singular, the asymptotic normality holds :

\begin{theorem*}
	Let $\hat{\theta}_T$ be an asymptotic QMLE and assume that $\Gamma$ is positive definite. We have 
	\beas
	\sqrt{T}(\hat{\theta}_T-\theta^{*}) \to^{d} \Gamma^{-\frac{1}{2}}\xi,
	\eeas
	where $\xi$ follows a standard normal distribution.
	\label{thmAsymptoticNormality}
\end{theorem*}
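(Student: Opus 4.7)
The plan is to run the classical $Z$-estimator argument: linearize the score around $\theta^{*}$, combine a martingale central limit theorem for the score with a local uniform law of large numbers for the observed information, and finish by Slutsky's lemma. Consistency (Theorem \ref{thmConsistency}) places $\hat\theta_T$ eventually in the interior of $\Theta$, so the expansion point is admissible, and the extended ergodic tool of Proposition \ref{extensionA3} is precisely what handles the singular integrands that appear at $\theta^{*}$.

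First I would prove the CLT $\Delta_T := T^{-1/2}\partial_\theta l_T(\theta^{*}) \to^d \mathcal{N}(0,\Gamma)$ via the multivariate martingale CLT (Jacod-Shiryaev). The rescaled predictable quadratic variation of $\Delta_T$ equals
\beas
\frac{1}{T}\sum_{\alpha \in \mathbf{I}} \int_0^T \frac{(\partial_\theta\lambda^\alpha(s,\theta^{*}))^{\otimes 2}}{\lambda^\alpha(s,\theta^{*})} 1_{\{\lambda^\alpha(s,\theta^{*})\neq 0\}}ds,
\eeas
which by Proposition \ref{extensionA3} applied componentwise to $\psi(u,v,w) = w^{\otimes 2} 1_{\{u>0\}}/u$, each entry of which lies in $C_\uparrow(E,\reels)$ thanks to [A2](ii), converges in probability to $\Gamma$. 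The conditional Lindeberg condition follows because a single jump of $\Delta_T$ is bounded by $T^{-1/2}\sup_{s,\alpha}|\partial_\theta\lambda^\alpha/\lambda^\alpha|$, which is $o_\proba(1)$ by [A2].

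Next I would establish a local uniform LLN for the observed information. Using the explicit formula for $\partial_\theta^2 l_T$ displayed above, I split $-T^{-1}\partial_\theta^2 l_T(\theta)$ into a stochastic integral with respect to $\tilde N$ and a Lebesgue part. The former vanishes in $\mathbb{L}^p$ uniformly in $\theta$ by the same Sobolev-embedding argument used in the proof of the uniform convergence lemma above; the latter converges in probability, uniformly in $\theta$ on a neighborhood of $\theta^{*}$, to a continuous matrix-valued function $\Gamma(\cdot)$ by Proposition \ref{extensionA3} applied to the relevant $C_\uparrow$ integrands, with $\Gamma(\theta^{*}) = \Gamma$ (the second-derivative contribution vanishes at $\theta = \theta^{*}$ since the factor $\lambda^\alpha(s,\theta) - \lambda^\alpha(s,\theta^{*})$ is zero there). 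Consequently $-T^{-1}\partial_\theta^2 l_T(\bar\theta_T) \to^\proba \Gamma$ for any consistent sequence $\bar\theta_T$.

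To conclude for the \emph{exact} QMLE, $\partial_\theta l_T(\hat\theta_T) = 0$ on an event of probability tending to one; the Taylor expansion $0 = \partial_\theta l_T(\theta^{*}) + \partial_\theta^2 l_T(\bar\theta_T)(\hat\theta_T - \theta^{*})$ with $\bar\theta_T$ on the segment $[\theta^{*},\hat\theta_T]$, combined with the two steps above and Slutsky, gives $\sqrt T(\hat\theta_T - \theta^{*}) \to^d \Gamma^{-1}\mathcal{N}(0,\Gamma)$, which has the law of $\Gamma^{-1/2}\xi$. For a general asymptotic QMLE the score need not vanish at $\hat\theta_T$; I would instead work with the localized log-likelihood ratio $\mathbb{Z}_T^{*}(u) := l_T(\theta^{*}+u/\sqrt T) - l_T(\theta^{*})$, which by the same two steps admits the quadratic expansion $\mathbb{Z}_T^{*}(u) = u^\top \Delta_T - \frac{1}{2} u^\top \Gamma u + o_\proba(1)$ on compact sets, and then invoke an argmax continuous mapping argument. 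The main obstacle I anticipate is exactly this last step: converting the $o_\proba(\sqrt T)$ slack tolerated by the definition of an asymptotic QMLE into tightness of $\hat u_T := \sqrt T(\hat\theta_T - \theta^{*})$. The needed input is a quadratic lower bound for $-\mathbb{Z}_T^{*}$ away from its argmax, valid on arbitrarily large compacts with high probability, which follows from the uniform LLN for the observed information together with the positive definiteness of $\Gamma$; this is precisely why the scale $o_\proba(1/\sqrt T)$ in the paper's definition of an asymptotic QMLE is the critical one.
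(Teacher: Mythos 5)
Your proposal reproduces the paper's proof in its essentials: the same two pillars (a martingale CLT for the score, which the paper proves in functional form in Lemma \ref{FTCL} via convergence of the predictable quadratic variation by ergodicity plus a Lindeberg condition, and a law of large numbers for the observed information uniform over balls shrinking to $\theta^{*}$, Lemma \ref{Fisher}, obtained by killing the $d\tilde N$ part with Sobolev's inequality and applying Proposition \ref{extensionA3} to the Lebesgue part), followed by the same Taylor--Slutsky step. Your remark that the $\partial_\theta^2\lambda$ contribution to the information vanishes at $\theta^{*}$ is exactly how the paper disposes of that term (it is $O_\proba(\mathrm{diam}[V_T])$ on the shrinking ball). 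One small technical caution: do not bound a single jump by $T^{-1/2}\sup_{s\le T}|\partial_\theta\lambda/\lambda|$, since [A2] controls moments only pointwise in $s$; the paper's route via $\esp\sum_{s}|\Delta S^T_s|^3=T^{-3/2}\esp\int_0^{uT}|\lambda^{-1}\partial_\theta\lambda|^3\lambda\,ds=O(T^{-1/2})$ is the clean one and is available to you.

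The genuine issue is the step you yourself flag as the main obstacle, namely passing from the exact QMLE to an asymptotic QMLE. Your assertion that tightness of $\sqrt T(\hat\theta_T-\theta^{*})$ follows from the uniform LLN for the observed information together with the positive definiteness of $\Gamma$ does not close at the scale allowed by the definition. Writing $\bar\theta_T$ for the exact maximizer, the quadratic lower bound coming from Lemma \ref{Fisher} and $\Gamma>0$ reads $l_T(\bar\theta_T)-l_T(\hat\theta_T)\ge c\,T|\hat\theta_T-\bar\theta_T|^2(1+o_\proba(1))$, while the definition of an asymptotic QMLE only bounds the left-hand side by $o_\proba(\sqrt T)$; this yields $|\hat\theta_T-\bar\theta_T|=o_\proba(T^{-1/4})$, hence $\sqrt T|\hat\theta_T-\bar\theta_T|=o_\proba(T^{1/4})$ and $\partial_\theta l_T(\hat\theta_T)=O_\proba(T)\,o_\proba(T^{-1/4})=o_\proba(T^{3/4})$, neither of which suffices. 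The slack that makes either your argmax continuous mapping route or the paper's score-comparison route work is $l_T(\hat\theta_T)\ge\sup_{\theta\in\Theta} l_T(\theta)-o_\proba(1)$, i.e. $o_\proba(T^{-1})$ after division by $T$, not $o_\proba(T^{-1/2})$. To be fair, the paper's own proof asserts $\partial_\theta l_T(\hat\theta_T)=\partial_\theta^2 l_T(\zeta_T^1)(\hat\theta_T-\bar\theta_T)=o_\proba(\sqrt T)$ without further comment and so contains the same gap; for the exact QMLE, which is the estimator actually used from Section 3.3 onward, the score vanishes and your argument is complete.
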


We have divided the proof of this result into the next two lemmas.

\begin{lemma*}

Under \textnormal{[A1]-[A3]}, if $V_T$ is a ball centered on $\theta^{*}$ shrinking to $\{\theta^{*}\}$, then 
$$ \sup_{\theta \in V_T} |T^{-1} \partial_{\theta}^2 l_T(\theta) + \Gamma| \to^\proba 0.$$
\label{Fisher}
\end{lemma*}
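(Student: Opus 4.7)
My plan is to write
\beas
T^{-1}\partial_\theta^2 l_T(\theta) + \Gamma &=& \bigl(T^{-1}\partial_\theta^2 l_T(\theta) - T^{-1}\partial_\theta^2 l_T(\theta^{*})\bigr) + \bigl(T^{-1}\partial_\theta^2 l_T(\theta^{*}) + \Gamma\bigr)
\eeas
and to treat the two brackets separately: the second as a pointwise LLN at $\theta^{*}$, the first as an equicontinuity statement on the shrinking ball $V_T$.

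For the pointwise bracket, the third line of the displayed expression for $\partial_\theta^2 l_T$ vanishes identically at $\theta = \theta^{*}$ because the factor $\lambda^\alpha(s,\theta) - \lambda^\alpha(s,\theta^{*})$ is zero there. I would control the first (martingale) line by the same Davis--Burkholder--Gundy computation as in the proof of the previous lemma: thanks to [A2] the $\mathbb{L}^p$ bounds on $\lambda^\alpha$, $\partial_\theta\lambda^\alpha$, $\partial_\theta^2\lambda^\alpha$ and $(\lambda^\alpha)^{-1}$ give a second moment of order $O(T)$, hence $T^{-1}$ times it vanishes in probability. The second (Fisher-type) line reads $-T^{-1}\sum_\alpha \int_0^T \tilde\psi\bigl(\lambda^\alpha(s,\theta^{*}),\lambda^\alpha(s,\theta^{*}),\partial_\theta\lambda^\alpha(s,\theta^{*})\bigr)\,ds$ for $\tilde\psi(u,v,w) = u^{-1}w^{\otimes 2}1_{\{u>0,\,v>0\}}$, a function which lies in $C_\uparrow(E,\reels)$ by [A2](i)--(ii). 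Proposition \ref{extensionA3} then delivers convergence in probability to $-\sum_\alpha \pi_\alpha(\tilde\psi,\theta^{*})$. Since at $\theta = \theta^{*}$ the first two arguments of the triplet coincide, the measure $\pi_\alpha^{\theta^{*}}$ is supported on $\{u=v\}$, so $\pi_\alpha(\tilde\psi,\theta^{*})$ equals the $\alpha$-contribution $\int w^{\otimes 2}u^{-1}1_{\{u>0\}}\,d\pi_\alpha^{\theta^{*}}$ to $\Gamma$, giving $T^{-1}\partial_\theta^2 l_T(\theta^{*}) \to^\proba -\Gamma$.

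For the equicontinuity bracket, since $V_T$ is a convex ball containing $\theta^{*}$, the mean value inequality applied componentwise yields
\beas
\sup_{\theta \in V_T}\bigl|T^{-1}\partial_\theta^2 l_T(\theta) - T^{-1}\partial_\theta^2 l_T(\theta^{*})\bigr| &\leq& \mathrm{diam}(V_T)\,\sup_{\theta' \in \Theta}T^{-1}\bigl|\partial_\theta^3 l_T(\theta')\bigr|,
\eeas
so it is enough to prove $\sup_\theta T^{-1}|\partial_\theta^3 l_T(\theta)| = O_\proba(1)$. Differentiating the displayed expression for $\partial_\theta^2 l_T$ once more (justified by a variant of Lemma \ref{deriv}) produces $\partial_\theta^3 l_T$ as a $dN^\alpha$-integral whose integrand involves derivatives of $\lambda^\alpha$ up to order three and inverse powers of $\lambda^\alpha$, plus a Lebesgue integral of $\partial_\theta^3 \lambda^\alpha$. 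I would push the supremum inside the integrals through the pathwise bound $\sup_\theta\bigl|\int h(\cdot,\theta)\,dN^\alpha\bigr| \leq \int \sup_\theta|h(\cdot,\theta)|\,dN^\alpha$, legitimate because $dN^\alpha$ is a counting measure, then compensate $dN^\alpha = d\tilde N^\alpha + \lambda^\alpha(\cdot,\theta^{*})\,ds$ and apply H\"older together with [A2](i)--(iii) to conclude $\esp\sup_\theta T^{-1}|\partial_\theta^3 l_T(\theta)| < \infty$. Since $\mathrm{diam}(V_T) \to 0$, the bracket vanishes in probability and the lemma follows.

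The main obstacle will be this uniform control of $\partial_\theta^3 l_T$ without invoking Sobolev's inequality, because that route would demand $\mathbb{L}^p$ bounds on $\partial_\theta^4 \lambda^\alpha$, one order beyond what [A2] supplies. Pushing $\sup_\theta$ inside the counting-measure integrals before compensating is precisely what circumvents this, exploiting only the $i = 3$ regularity already assumed in [A2](i).
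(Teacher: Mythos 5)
Your proof is correct, but it is organized differently from the paper's. The paper splits $\partial_\theta^2 l_T(\theta)$ into its three explicit pieces and treats each one separately: the martingale piece is shown to vanish uniformly over \emph{all} of $\Theta$ via Sobolev's inequality (whose $\theta$-derivative of the integrand involves $\partial_\theta^3\lambda$, still within [A2]); the two Lebesgue pieces are handled by direct Lipschitz-in-$\theta$ bounds on their integrands, each of order $O_\proba(\mathrm{diam}[V_T])$, plus Proposition \ref{extensionA3} applied to $U_T^\alpha(\theta^{*})$. You instead center everything at $\theta^{*}$ and absorb the entire $\theta$-increment into a single mean-value bound $\mathrm{diam}(V_T)\cdot\sup_\theta T^{-1}|\partial_\theta^3 l_T(\theta)|$, with the stochastic boundedness of the latter obtained by pushing $\sup_\theta$ inside the counting-measure integral and compensating, rather than by Sobolev. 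Your observation that Sobolev applied at the level of $\partial_\theta^3 l_T$ would require fourth-order bounds unavailable under [A2] is exactly right (the paper only invokes such a bound, its estimate (\ref{eqPartial}), under the stronger [B1]--[B2]), and your pathwise workaround is legitimate since $dN^\alpha$ is a positive measure and the suprema are predictable by Remark 2.1. What the paper's route buys is uniform control of the martingale part of $\partial_\theta^2 l_T$ over the whole parameter set, which is stronger than the lemma needs; what yours buys is the elimination of the Sobolev embedding from this lemma entirely, at the cost of only controlling things on the shrinking ball --- which is all that is claimed. The pointwise part of your argument (vanishing of the third line at $\theta^{*}$, DBG for the martingale, $\pi_\alpha^{\theta^{*}}$ supported on $\{u=v\}$ so that $\pi_\alpha(\tilde\psi,\theta^{*})$ reproduces the $\alpha$-block of $\Gamma$) matches the paper's in substance.
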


\begin{proof}
	We first deal with the martingale part as for the consistency. We can easily see that the processes
	$\partial_\theta\left(\lambda^\alpha(s,\theta)^{-1}\partial_\theta \lambda^\alpha(s,\theta)\right)1_{\{\lambda^\alpha(s,\theta^{*}) \neq 0\}}$ and $\partial_\theta^2\left(\lambda^\alpha(s,\theta)^{-1}\partial_\theta \lambda^\alpha(s,\theta)\right)1_{\{\lambda^\alpha(s,\theta^{*}) \neq 0\}}$ are dominated by polynoms in $\partial_\theta^i \lambda^\alpha(t,\theta)$ and $\lambda^\alpha(t,\theta)^{-1}1_{\{\lambda^\alpha(s,\theta^{*}) \neq 0\}}$ for $i \in \{0,1,2,3\}$, and thus by an immediate application of Sobolev's inequality and [A2], we get for any $p >n$
	\beas
		\esp \l| \sup_{\theta \in \Theta}\frac{1}{T}\int_0^T{\partial_\theta\left(\lambda^\alpha(s,\theta)^{-1}\partial_\theta \lambda^\alpha(s,\theta)\right)1_{\{\lambda^\alpha(s,\theta^{*}) \neq 0\}}d\tilde{N}_t^\alpha}\r|^p  &=& O\l(T^{-\frac{p}{2}}\r),\\
	\eeas
	thus $$\inv{T}\sup_{\theta \in \Theta} \sum_{\alpha \in \mathbf{I}}{\int_0^T{\partial_\theta\left(\lambda^\alpha(s,\theta)^{-1}\partial_\theta \lambda^\alpha(s,\theta)\right)1_{\{\lambda^\alpha(s,\theta^{*}) \neq 0\}}d\tilde{N}_t^\alpha}} \to^\proba 0. $$
	
	For $\theta \in V_T$, we have 
	
	\beas
	&&\inv{T}\l|\int_0^T{\partial_{\theta}^2 \lambda^\alpha(s,\theta)\lambda^\alpha(s,\theta)^{-1}(\lambda^\alpha(s,\theta)-\lambda^\alpha(s,\theta^{*}))1_{\{\lambda^\alpha(s,\theta^{*}) \neq 0\}}ds}\r|\\
	&\leq&\frac{|\theta - \theta^{*}|}{T} \int_0^T{ \sup_{\theta \in \Theta}\l|\partial_{\theta}^2 \lambda^\alpha(s,\theta)\lambda^\alpha(s,\theta)^{-1}\r|\sup_{\theta \in \Theta}|\partial_\theta \lambda^\alpha(s,\theta)|1_{\{\lambda^\alpha(s,\theta^{*}) \neq 0\}}ds}\\
	&=& O_\proba(|\theta - \theta^{*}|)\\
	&=& O_\proba(\text{diam}[V_T]),\\
	\eeas
	therefore $$\sup_{\theta \in V_T} \inv{T}\sum_{\alpha \in \mathbf{I}}{\int_0^T{\partial_{\theta}^2 \lambda^\alpha(s,\theta)\lambda^\alpha(s,\theta)^{-1}(\lambda^\alpha(s,\theta)-\lambda^\alpha(s,\theta^{*}))}1_{\{\lambda^\alpha(s,\theta^{*}) \neq 0\}}ds} \to^{\proba} 0.$$

	For the middle term, consider the process 
	
		$$U_T^\alpha(\theta) = \frac{1}{T}\int_0^T{(\partial_\theta \lambda^\alpha)^{\otimes 2}(s,\theta)\lambda^\alpha(s,\theta)^{-2}\lambda^\alpha(s,\theta^{*})1_{\{\lambda^\alpha(s,\theta^{*}) \neq 0\}}ds},$$
		which, evaluated at point $\theta^{*}$ equals
		$$U_T^\alpha(\theta^{*})=\frac{1}{T}\int_0^T{(\partial_\theta \lambda^\alpha)^{\otimes 2}(s,\theta^{*})\lambda^\alpha(s,\theta^{*})^{-1}1_{\{\lambda^\alpha(s,\theta^{*}) \neq 0\}}ds}.$$
		
		For $\theta \in V_T$,
		
		\beas
		|U_T^\alpha(\theta)-U_T^\alpha(\theta^{*})| &\leq& \frac{|\theta- \theta^{*}|}{T}\int_0^T{\l|\partial_\theta\l( \frac{(\partial_\theta \lambda^\alpha)^{\otimes 2}(s,\theta)}{\lambda^\alpha(s,\theta)^{2}}\r) \r|\lambda^\alpha(s,\theta^{*})ds}\\
		&\leq& \frac{|\theta- \theta^{*}|}{T}\int_0^T{2\sup_{\theta \in \Theta}\l( \frac{|\partial_\theta \lambda^\alpha(s,\theta) ||\partial_\theta^2 \lambda^\alpha(s,\theta)|}{ |\lambda^\alpha(s,\theta)|^{2}} + \frac{|\partial_\theta \lambda^\alpha(s,\theta)|^2 |\partial_\theta^2 \lambda^\alpha(s,\theta)|}{|\lambda^\alpha(s,\theta)|^{3}}\r)\lambda^\alpha(s,\theta^{*})ds} \\
		&=& O_\proba(\text{diam}[V_T]).\\
		\eeas
	
	Finally, apply Proposition \ref{extensionA3} to $U_T^\alpha(\theta^{*})$ and write $\Gamma \in \reels^{n \times n}$ the limit of $\sum_{\alpha \in \mathbf{I}}{U_T^\alpha(\theta^{*})}$ to conclude.
\end{proof}

\begin{lemma*}
	We have :
	$$ \l(\frac{1}{\sqrt{T}}\partial_\theta l_{uT}(\theta^{*})\r)_{u \in [0,1]} \to^{d} \Gamma^{\frac{1}{2}}(W_u)_{u\in [0,1]},$$
	where $W$ is a standard Brownian motion (where the convergence happens in the Skorokhod space $\mathbf{D}([0,1])$).
	\label{FTCL}
\end{lemma*}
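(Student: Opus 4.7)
The plan is to apply the martingale functional central limit theorem to the $\reels^n$-valued process
\beas
M_T(u):=T^{-1/2}\,\partial_\theta l_{uT}(\theta^{*}),\qquad u\in[0,1],
\eeas
regarded as a locally square integrable, purely discontinuous martingale in the time-changed filtration $(\calf_{uT})_{u\in[0,1]}$. Indeed, the formula for $\partial_\theta l_T(\theta^{*})$ exhibits each coordinate as a stochastic integral against the compensated point process $d\tilde N^\alpha$, and square integrability of the integrands follows from \textnormal{[A2]}. I would invoke Rebolledo's theorem (Theorem VIII.3.11 in \cite{JacodLimit2003}) and reduce the convergence in $\mathbf{D}([0,1])$ to two checks: (a) convergence in probability of the predictable sharp bracket $\langle M_T\rangle_u$ to $u\Gamma$ for each $u\in[0,1]$; and (b) a Lindeberg-type negligibility condition on the jumps.

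For (a), since the components $N^\alpha$ have no common jumps, the bracket reduces to
\beas
\langle M_T\rangle_u \;=\; \frac{1}{T}\sum_{\alpha\in\mathbf{I}}\int_0^{uT}\frac{(\partial_\theta\lambda^\alpha)^{\otimes 2}(s,\theta^{*})}{\lambda^\alpha(s,\theta^{*})}\,1_{\{\lambda^\alpha(s,\theta^{*})\neq 0\}}\,ds,
\eeas
and each scalar entry of this integrand, seen as a function of the triplet $(\lambda^\alpha(s,\theta^{*}),\lambda^\alpha(s,\theta^{*}),\partial_\theta\lambda^\alpha(s,\theta^{*}))$, lies in the class $C_\uparrow(E,\reels)$ of Proposition \ref{extensionA3}. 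I would then apply that proposition with horizon $uT$ in place of $T$ (the law of large numbers of \textnormal{[A3]} being valid along any diverging sequence of horizons), multiply by $u$, and sum over $\alpha$ to obtain $\langle M_T\rangle_u\to^\proba u\Gamma$ pointwise in $u\in[0,1]$.

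For (b), the jumps $|\Delta M_T|$ are uniformly of size $T^{-1/2}|\partial_\theta\lambda^\alpha/\lambda^\alpha|$, and the predictable compensator of $\sum_{s\le uT}|\Delta M_T(s)|^2\,1_{\{|\Delta M_T(s)|>\epsilon\}}$ is bounded by
\beas
\frac{1}{T}\sum_{\alpha}\int_0^{uT}\left|\frac{\partial_\theta\lambda^\alpha(s,\theta^{*})}{\lambda^\alpha(s,\theta^{*})}\right|^{2}\!\lambda^\alpha(s,\theta^{*})\,1_{\{|\partial_\theta\lambda^\alpha(s,\theta^{*})/\lambda^\alpha(s,\theta^{*})|>\epsilon\sqrt T\}}\,ds.
\eeas
The $\mathbb{L}^p$-bounds in \textnormal{[A2](i)--(ii)} yield uniform integrability of the unindicated integrand, while the event in the indicator has probability tending to $0$ as $\epsilon\sqrt T\to\infty$; a dominated-convergence argument then forces the expectation of this bound to vanish. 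Rebolledo's theorem now delivers $M_T\to^d \Gamma^{1/2}W$ in $\mathbf{D}([0,1])$.

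The main technical obstacle will be the Lindeberg step: one must control, uniformly in $u\in[0,1]$, the tail contribution from jumps exceeding $\epsilon$, which amounts to bounding a truncation of $(\partial_\theta\lambda^\alpha/\lambda^\alpha)^2\lambda^\alpha$ at the level $\epsilon\sqrt T$. This is precisely where the strength of the moment condition \textnormal{[A2]} is indispensable. The bracket computation in (a), by contrast, should be essentially automatic once Proposition \ref{extensionA3} is applied at horizons of the form $uT\to\infty$.
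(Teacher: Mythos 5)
Your proposal is correct and follows essentially the same route as the paper: both treat $T^{-1/2}\partial_\theta l_{uT}(\theta^{*})$ as a local martingale, identify its predictable bracket as an ergodic average converging to $u\Gamma$ via the extended law of large numbers, and verify a Lindeberg condition before invoking the martingale functional CLT from Chapter VIII of \cite{JacodLimit2003}. The only (immaterial) difference is in the Lindeberg step, where the paper bounds the truncated second moment by the third moment divided by the truncation level, obtaining an explicit $O(T^{-1/2})$ rate, whereas you argue by uniform integrability and the vanishing probability of the truncation event; both are valid under \textnormal{[A2]}.
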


\begin{proof}
We consider the process 
\beas
S_u^T = \sum_{\alpha \in \mathbf{I} }{\int_0^{uT}{\frac{1}{\sqrt T}\lambda^\alpha(s,\theta^{*})^{-1}\partial_\theta \lambda^\alpha(s,\theta^{*})1_{\{\lambda^\alpha(s,\theta^{*}) \neq 0\}}d\tilde{N}_s^\alpha}}, 
\eeas
and show a functional central limit theorem when $T \to \infty$.  Indeed, as $\Delta N^{\alpha_1} \Delta N^{\alpha_2} = 0 $ almost surely for $\alpha_1 \neq \alpha_2$, we have 

\beas
\langle S^T, S^T\rangle_u =u\sum_{\alpha \in \mathbf{I} }{\int_0^{uT}{\frac{1}{ uT}\lambda^\alpha(s,\theta^{*})^{-1}\partial_\theta \lambda^\alpha(s,\theta^{*})^2 1_{\{\lambda^\alpha(s,\theta^{*}) \neq 0\}}ds}},
\eeas
and by ergodicity we deduce that 

\beas
\langle S^T, S^T\rangle_u \to^\proba u\Gamma.
\eeas

We now check Lindenberg's condition. For any $a >0$, 

\beas
	\esp \sum_{s \leq u} (\Delta S_s^T)^2 1_{\{ |\Delta S_s^T| > a \}}  &\leq& \esp \inv{a} \sum_{s \leq u} |\Delta S_s^T|^3 \\
	&=& \esp \inv{a} \sum_{\alpha \in \mathbf{I}} \int_0^{uT}{\left|\frac{\lambda^\alpha(s,\theta^{*})^{-1}\partial_\theta \lambda^\alpha(s,\theta^{*})}{\sqrt T}\right|^3 dN_s^\alpha }\\
	&=&\esp \inv{a} \sum_{\alpha \in \mathbf{I}} \frac{1}{T^{\frac{3}{2}}}\int_0^{uT}{\left|\lambda^\alpha(s,\theta^{*})^{-1}\partial_\theta \lambda^\alpha(s,\theta^{*})\right|^3 \lambda^\alpha(s,\theta^{*})ds }\\
	&\leq& \sum_{\alpha \in \mathbf{I}} \frac{1}{T^{\frac{3}{2}}}\int_0^{uT}{\esp \l[\sup_{\theta \in \Theta} \left|\lambda^\alpha(s,\theta^{*})^{-1}\partial_\theta \lambda^\alpha(s,\theta^{*})\right|^3  \lambda^\alpha(s,\theta^{*})\r] ds}\\
	&=& O\l(\frac{1}{\sqrt T}\r) \to 0.\\
\eeas

The conclusion then holds applying 3.24, chapter VIII, in \cite{JacodLimit2003}.
\end{proof} 

We finally establish the asymptotic normality from the previous lemmas :

\begin{proof}[Proof of Theorem \ref{thmAsymptoticNormality}]
	Let $\bar{\theta}_T$ be the value that maximizes $\theta \to l_T(\theta)$.  Thanks to [A2], we put $\zeta_T^1 \in [\bar{\theta}_T, \hat{\theta}_T]$ and $\zeta_T^2 \in [\theta^{*}, \hat{\theta}_T]$ such that 

	\beas
	\partial_\theta l_T(\hat{\theta}_T) = \partial_\theta^2 l_T(\zeta_T^1)(\hat{\theta}_T-\bar{\theta}_T) = o_\proba \l(\sqrt T\r)
	\eeas
	on one hand, and
	\beas
	\partial_\theta l_T(\hat{\theta}_T) = \partial_\theta l_T(\theta^{*}) + \partial_\theta^2 l_T(\zeta_T^2)(\hat{\theta}_T-\theta^{*})
	\eeas
	on the other hand. This yields, after scaling by $\sqrt T$,
	\beas
     -\frac{\partial_\theta l_T(\theta^{*})}{\sqrt T} + o_\proba \l(1\r) = \frac{\partial_\theta^2 l_T(\zeta_T^2)}{T} \sqrt T (\hat{\theta}_T-\theta^{*}).
	\eeas

	We then multiply by $-\Gamma^{-1}$ on both sides and use Lemma \ref{Fisher} and Lemma \ref{FTCL}, and the asymptotic normality follows. 
\end{proof}

\subsection{The general QLA}

We slightly strengthen assumptions about ergodicity and derivability of the intensity process in order to apply the general Quasi Likelihood Analysis. In particular, this yields the convergence of moments of the QMLE as well as the convergence of moments for the QBE. From now on, we will be only interested in the exact QMLE, that is the estimator that maximizes the Quasi Log Likelihood $\theta \to l_T(\theta)$.\\


\bd
\im[[B1\!\!]] 
The mapping $\lambda : \Omega \times \reels_+ \times \Theta \to \reels_+$ is $\calf \otimes \mathbf{B}(\reels_+) \otimes \mathbf{B}(\Theta)$-measurable. Moreover, almost surely,
  \bd
		\im[{\bf (i)}] for any $\theta \in \Theta$, $s \to \lambda(s,\theta)$ is left continuous.
		\im[{\bf (ii)}] for any $s \in \reels_+$, $\theta \to \lambda(s,\theta)$ is in $C^4(\Theta)$, and admits a continuous extension to $\overline{\Theta}$.
	\ed

\im[[B2\!\!]] 
The intensity processes and their first derivatives satisfy 
	\bd
		\im[(i)] for any $p > 1$, $\sup_{t \in \reels_+}  \sum_{i=0}^4{\left\| \sup_{\theta \in \Theta}\partial_\theta^i \lambda(t,\theta)\right\|_p} <+\infty$.
		\im[(ii)] for any $p > 1$, for any $\alpha \in \mathbf{I}$, $\sup_{t \in \reels_+} \left\| \sup_{\theta \in \Theta}\l|\lambda^\alpha(t,\theta)^{-1}\r| 1_{\{\lambda^\alpha(t,\theta) \neq 0\}}\right\|_p <+\infty$.
		\im[(iii)] For any $\theta \in \Theta$, for any $\alpha \in \mathbf{I}$, $\lambda^\alpha(t,\theta) = 0 $ if and only if  $\lambda^\alpha(t,\theta^{*}) =0$.
	\ed
\ed

The ergodicity assumption needs to be strengthened as well. The key point in the following assumption is to postulate the existence of a rate of convergence for the LLN. Let $D_\uparrow(E,\reels)$ be the set of functions $\psi$ that are of class $C^1$ on $\l(\reels_+ - \{0\} \r) \times \l(\reels_+ - \{0\} \r)\times \reels^n$ and such that $\psi, \l|\nabla \psi \r| \in C_\uparrow(E,\reels) $.

\bd
\im[[B3\!\!]] 
For any $\alpha \in \mathbf{I}$, there exists a mapping $\pi_\alpha : D_\uparrow(E,\reels) \times \Theta \to \reels$ and there exists $0 < \gamma < \frac{1}{2}$ such that for any  $(\psi,\theta) \in D_\uparrow(E,\reels) \times \Theta $ and for every $p >1$ the following convergence holds :
	\beas
	\sup_{\theta \in \Theta} T^\gamma\l\|\frac{1}{T}\int_0^T{\psi(\lambda^\alpha(s,\theta^{*}),\lambda^\alpha(s,\theta),\partial_\theta \lambda^\alpha(s,\theta))ds} - \pi_\alpha(\psi,\theta) \r\|_p \to 0. 
	\eeas

\im[[B4\!\!]]
Define $\chi_0$ as: 
$$\chi_0 = \inf_{\theta \in \Theta \backslash\{\theta^{*}\}} -\frac{\mathbb{Y}(\theta)}{|\theta-\theta^{*}|^2}.$$

We assume that 
$$\chi_0 >0.$$ 

\ed

For a given prior density $p$ on the space $\Theta$, we define the Quasi-Bayesian Estimator as follows: 

\beas
\tilde{\theta}_T = \l[ \int_\Theta{\text{exp}(l_T(\theta))p(\theta)d\theta}\r]^{-1}\int_\Theta{\theta \text{exp}(l_T(\theta))p(\theta)d\theta}.
\eeas

We are going to apply results from \cite{YoshidaPolynomial2011} about Quasi Likelihood Analysis based on polynomial type large deviations. This approach follows techniques that have been developed in \cite{ibragimov2013statistical} to prove the asymptotic properties of the MLE and the BE in a general context. In particular, the convergence of moments of those estimators is obtained as a consequence of the convergence in distribution of the Likelihood random field along with a polynomial type large deviation inequality similar to the one stated in Theorem \ref{thmQLA}. In our case, we adopt the notations of \cite{YoshidaPolynomial2011} and represent the Quasi Likelihood process as follows. For any $u \in U_T = \{ u \in \reels^n | \theta + T^{-1/2}u \in \Theta \}$, we write $\theta_u = \theta + T^{-1/2}u$ and define the Quasi Likelihood random field 

\beas
Z_T(u) = \text{exp}\{l_T(\theta_u) - l_T(\theta^{*})\}.
\eeas 

The next theorem gives a large deviation inequality on $Z_T$, which in turn is widely used to establish the convergence of moments of the estimators.    
\begin{theorem*}
	Under \textnormal{[B1]-[B4]}, the two following results hold: 
	
	\bd
		\im[Polynomial type large deviation inequality] 
		for every $L>0$, there exists $C_L$ such that :
	$$\proba \left[ \sup_{u \in U_T, |u| >r} Z_T(u) \geq e^{-r}\right] \leq \frac{C_L}{r^L}. $$
		\im[Convergence of moments] 
		If $\hat{\theta}_T$ is the QMLE and $\tilde{\theta}_T$ the QBE, we have :
	$$\esp \left[ f(\sqrt{T}(\hat{\theta}_T-\theta^{*})) \right] \to \esp[f(\Gamma^{-\frac{1}{2}}\xi)],$$
	$$\esp \left[ f(\sqrt{T}(\tilde{\theta}_T-\theta^{*})) \right] \to \esp[f(\Gamma^{-\frac{1}{2}}\xi)],$$ 
	for any continuous $f$ of polynomial growth, and such that $\xi$ follows a standard normal distribution.
	\ed
	\label{thmQLA}
\end{theorem*}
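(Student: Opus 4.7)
The plan is to invoke the polynomial type large deviation machinery of \cite{YoshidaPolynomial2011} by verifying its abstract conditions in our point process setting, and then to deduce the convergence of moments from the deviation inequality through the standard QLA argument. The central object is the Quasi Likelihood random field $Z_T(u) = \exp\{l_T(\theta^{*} + T^{-1/2}u) - l_T(\theta^{*})\}$. A fourth-order Taylor expansion of $l_T$ at $\theta^{*}$, which is justified by [B1] and the integrability bounds in [B2] (allowing the exchange of $\partial_\theta$ and $\int_0^T$ via Lemma \ref{deriv}), yields the decomposition
\beas
\log Z_T(u) = \Delta_T \cdot u - \frac{1}{2} u^{\otimes 2} \Gamma_T  + r_T(u),
\eeas
where $\Delta_T = T^{-1/2} \partial_\theta l_T(\theta^{*})$, $\Gamma_T = -T^{-1} \partial_\theta^2 l_T(\theta^{*})$, and $r_T(u)$ collects the third-order term involving $T^{-3/2} \partial_\theta^3 l_T$ evaluated at intermediate points.

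First I would establish the required $L^p$-bounds uniformly in $T$. The convergences $\Delta_T \to^d \Gamma^{1/2}\xi$ and $\Gamma_T \to^{\mathbb{P}} \Gamma$ are already provided by Lemma \ref{FTCL} and Lemma \ref{Fisher}, but I would upgrade them to moment bounds of arbitrary order. For the martingale part of the score and of the Hessian, Davis-Burkholder-Gundy combined with [B2] gives $\|\Delta_T\|_p = O(1)$ and $\|\Gamma_T - \Gamma\|_p = o(1)$ for every $p > 1$; for the drift parts the rate comes directly from [B3] applied to $\psi \in D_\uparrow(E,\reels)$ built out of $\lambda^\alpha$, $(\lambda^\alpha)^{-1}$, and their derivatives. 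The extra $C^4$ regularity in [B1] and the bound up to $\partial_\theta^4 \lambda$ in [B2] are precisely what is needed to control $r_T$ (via the third derivative, plus Sobolev embedding to take the $\sup_u$ in a small ball).

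The main obstacle, and the heart of Yoshida's scheme, is the polynomial-type large deviation inequality away from $\theta^{*}$. For $|u| > r$ one cannot rely on the local expansion, so the estimate must be obtained from $\mathbb{Y}_T$ itself: we need, for every $p > 1$,
\beas
\sup_{T \geq 1} T^{\gamma p} \, \esp \Bigl[ \sup_{\theta \in \Theta} |\mathbb{Y}_T(\theta) - \mathbb{Y}(\theta)|^p \Bigr] < \infty.
\eeas
The pointwise rate is supplied by [B3]; to promote it to a uniform rate I would apply Sobolev's inequality as in the proof of the uniform consistency lemma, now taking advantage of the additional derivatives available under [B1]-[B2] so that $\partial_\theta(\mathbb{Y}_T - \mathbb{Y})$ also enjoys the rate $T^{-\gamma}$ in $L^p$. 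Combining this uniform rate with the quadratic lower bound $-\mathbb{Y}(\theta) \geq \chi_0 |\theta - \theta^{*}|^2$ from [B4] furnishes exactly the separation estimate required by Theorem 3 of \cite{YoshidaPolynomial2011}, and the large deviation inequality follows.

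Once the deviation inequality is established, the convergence of moments is a routine consequence of the QLA framework. For the QMLE one writes $\sqrt{T}(\hat{\theta}_T - \theta^{*}) = \operatorname{argmax}_u \log Z_T(u)$ and uses the deviation bound to restrict attention to a bounded region, on which the local quadratic expansion together with $\Delta_T \to^d \Gamma^{1/2}\xi$ and $\Gamma_T \to^{\mathbb{P}} \Gamma$ yields $\sqrt{T}(\hat{\theta}_T - \theta^{*}) \to^d \Gamma^{-1/2}\xi$; the polynomial tail bound provides uniform integrability against any continuous $f$ of polynomial growth. For the QBE one expresses $\sqrt{T}(\tilde{\theta}_T - \theta^{*})$ as an integral of $u$ against the normalized random measure $Z_T(u)p(\theta_u)\,du/\int Z_T p$; the same deviation inequality confines mass to a bounded set, the prior $p$ is continuous at $\theta^{*}$ and positive, and a Laplace-type argument gives the announced limit. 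This is precisely the conclusion of Theorem 8 and Theorem 9 in \cite{YoshidaPolynomial2011}, which can be invoked directly once the conditions above are verified.
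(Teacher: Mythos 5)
Your proposal is correct and follows essentially the same route as the paper: you establish the same $L^p$ bounds (on $\Delta_T$, on $T^{\gamma}|\Gamma_T-\Gamma|$, on the uniform deviation $T^{\gamma}\sup_{\theta}|\mathbb{Y}_T(\theta)-\mathbb{Y}(\theta)|$ via Sobolev's inequality, and on the remainder through $\partial_\theta^3 l_T$), combine them with the identifiability bound $-\mathbb{Y}(\theta)\geq \chi_0|\theta-\theta^{*}|^2$ from [B4] to verify the hypotheses of Theorem 3 in \cite{YoshidaPolynomial2011}, and then deduce the convergence of moments of the QMLE and QBE from the corresponding theorems of that reference exactly as the paper does. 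The only point the paper spells out that you pass over quickly is the $C$-tightness of $\log Z_T$ on compacts (the modulus-of-continuity estimate on $w_T(\delta)$), which is needed for the functional convergence of $Z_T$ feeding into the QMLE moment theorem, but this is implicit in your uniform control of the local expansion.
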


To prove this result, and in order to highlight the LAN property (at point $\theta^{*}$) of the model, we rewrite $Z_T(u)$ as 

\bea
 Z_T(u) = \text{exp}\l\{\Delta_T[u] -\frac{1}{2} \Gamma_T[u,u] + r_T[u]\r\},
\eea
with 

\beas
\Delta_T[u] &=& \frac{u}{\sqrt{T}} \partial_\theta l_T(\theta^{*}) 
\eeas
and 

\beas
\Gamma_T[u,u] = -\partial_{\theta}^2 l_T(\theta^{*})\l[\frac{u^{\otimes 2}}{T}\r].
\eeas

Finally $r_T[u]$ is defined as the following residual in the Taylor formula :
\beas
 l_T(\theta_u)=l_T(\theta^{*})+\Delta_T[u] -\frac{1}{2} \Gamma_T[u,u]  + r_T[u].  
\eeas

The following technical lemma is proved in the appendix :

\begin{lemma*}
\label{lemmaQLA}
Under \textnormal{[B1]-[B3]}, for every $p>1$,
\bea
\sup_{T \in \reels_+} \left \| \Delta_T \right \|_p < \infty,
\label{eqDelta}
\eea

\bea
\sup_{T \in \reels_+} \left\| T^\gamma \sup_{\theta \in \Theta} |\mathbb{Y}_T(\theta)-\mathbb{Y}(\theta)| \right\|_p < \infty, 
\label{eqY}
\eea

\bea
\sup_{T \in \reels_+} \left\| T^\gamma |\Gamma_T - \Gamma| \right\|_p < \infty, 
\label{eqGamma}
\eea

\bea
\sup_{T \in \reels_+} \left\| T^{-1} \sup_{\theta \in \Theta}|\partial_\theta^3l_T(\theta)| \right\|_p < \infty. 
\label{eqPartial}
\eea

\end{lemma*}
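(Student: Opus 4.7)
\textbf{Overall strategy.} My plan is to decompose each of the four quantities into a martingale part (a stochastic integral against $d\tilde N^\alpha$) and a drift part (a Lebesgue integral in $ds$). Martingale parts I will bound via the Burkholder--Davis--Gundy inequality combined with the uniform moment bounds in [B2], producing $L^p$-bounds of order $T^{1/2}$. Drift parts, which after normalisation by $T^{-1}$ are time-averages of the form $T^{-1}\int_0^T \psi(\lambda^\alpha(s,\theta^*),\lambda^\alpha(s,\theta),\partial_\theta\lambda^\alpha(s,\theta))ds$ for appropriate $\psi\in D_\uparrow(E,\reels)$, I will handle via [B3], which delivers $L^p$-convergence at rate $T^{-\gamma}$ uniformly in $\theta$. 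Whenever a sup over $\theta$ is required (for (2) and (4)) I will invoke Sobolev's inequality exactly as in Section 3.2; this is precisely what forces the $C^4$ regularity in [B1] and the fourth-derivative moment bound in [B2].

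\textbf{The pointwise estimates (1) and (3).} Statement (1) reduces to the direct BDG bound on the martingale $\Delta_T = T^{-1/2}\partial_\theta l_T(\theta^*)$, whose quadratic variation is $T^{-1}\sum_\alpha \int_0^T (\lambda^\alpha)^{-2}(\partial_\theta\lambda^\alpha)^{\otimes 2}(s,\theta^*)\lambda^\alpha(s,\theta^*) 1_{\{\lambda^\alpha(s,\theta^*)\neq 0\}}ds$; by [B2] its $L^{p/2}$-norm is $O(1)$. For (3) I expand $-T^{-1}\partial_\theta^2 l_T(\theta^*) = T^{-1}M_T - T^{-1}D_T$, where $D_T = \sum_\alpha \int_0^T (\partial_\theta\lambda^\alpha)^{\otimes 2}/\lambda^\alpha(s,\theta^*)\cdot 1_{\{\lambda^\alpha(s,\theta^*)\neq 0\}}ds$. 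Applying [B3] to $\psi(u,v,w) = w^{\otimes 2}u^{-1}1_{\{u>0\}}\in D_\uparrow(E,\reels)$ yields $\|T^{-1}D_T - \Gamma\|_p = o(T^{-\gamma})$, while BDG gives $\|T^{-1}M_T\|_p = O(T^{-1/2})$; since $\gamma<1/2$, the $T^{-\gamma}$ rate dominates and (3) follows.

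\textbf{The uniform estimates (2) and (4).} For (2) I reuse the decomposition $\mathbb{Y}_T(\theta)-\mathbb{Y}(\theta) = \sum_\alpha \{T^{-1}M_T^\alpha(\theta) + (V_T^\alpha(\theta) - \pi_\alpha(\psi_\alpha,\theta))\}$ employed in the proof of consistency, with $\psi_\alpha(u,v,w)=1_{\{u>0,v>0\}}\{\log(v/u)u-(v-u)\}$; one checks $\psi_\alpha\in D_\uparrow(E,\reels)$ by absorbing the logarithmic singularities into the polynomial growth allowed in $u^{-1}$ and $v^{-1}$. Sobolev applied to $M_T^\alpha$ and $\partial_\theta M_T^\alpha$ (each BDG-controlled via [B2]) gives $\|\sup_\theta T^{-1}|M_T^\alpha(\theta)|\|_p = O(T^{-1/2})$, and [B3] handles the drift term uniformly at rate $T^{-\gamma}$; combining produces (2). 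For (4), $\partial_\theta^3 l_T(\theta)$ is a finite sum of stochastic and ordinary integrals whose integrands are polynomial combinations of $\partial_\theta^i\lambda^\alpha$ ($i\le 3$) and powers of $(\lambda^\alpha)^{-1}1_{\{\lambda^\alpha\neq 0\}}$; the fourth-derivative bound in [B2] is exactly what Sobolev needs to convert the pointwise-in-$\theta$ estimates (of order $O(T^{1/2})$ for the martingale part and $O(T)$ for the drift part) into uniform-in-$\theta$ bounds, from which (4) follows after division by $T$.

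\textbf{Main obstacle.} I expect the main difficulty to be the bookkeeping needed to verify cleanly that each test function above lies in $D_\uparrow(E,\reels)$, most delicately the log-singular $\psi_\alpha$ for (2), and to check that every Sobolev application has the required regularity at hand -- this is what dictates the ``one extra derivative'' strengthening of [B1]--[B2] relative to [A1]--[A2]. The rate calibration is also essential: the BDG rate $T^{-1/2}$ always beats $T^{-\gamma}$ thanks to $\gamma<1/2$, so the final rates in (2) and (3) are genuinely set by [B3], and the restriction $\gamma<1/2$ cannot be relaxed without also revisiting the martingale estimates.
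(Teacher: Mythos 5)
Your proposal is correct and follows essentially the same route as the paper: a martingale/drift decomposition of each quantity, BDG-type moment bounds combined with [B2] for the martingale parts, [B3] at rate $T^{-\gamma}$ for the drift parts (which sets the final rate since $\gamma<1/2$), and Sobolev's inequality to pass to the suprema over $\theta$ in (2) and (4). The one imprecision is that the optional quadratic variation of a stochastic integral against $d\tilde N^\alpha$ is $\int_0^T f_s^2\,dN_s^\alpha$ rather than the predictable bracket $\int_0^T f_s^2\lambda^\alpha(s,\theta^{*})\,ds$ you quote, so reducing the BDG bound to the moment conditions of [B2] requires the iterated estimate of Lemma \ref{lemmaMajoration} (or an equivalent), a detail a sketch can reasonably absorb.
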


Let us deal with the proof of Theorem \ref{thmQLA}.

\begin{proof}[Proof of Theorem \ref{thmQLA}]
We first show the polynomial type large deviation inequality. We apply Theorem 3 in \cite{YoshidaPolynomial2011}. Setting $\beta_1 = \gamma$, $\beta_2=\frac{1}{2} - \gamma$, $\rho =2$, $\rho_2 \in ] 0, 2\gamma [$, $\alpha \in ]0,\frac{\rho_2}{2}[$, and $\rho_1 \in ]0,\min \{1,\frac{\alpha}{1-\alpha},\frac{2\gamma}{1-\alpha} \} [$, Conditions $(A1^{''})$, $(A4^{'})$, $(A6)$ in \cite{YoshidaPolynomial2011} are satisfied thanks to Lemma \ref{lemmaQLA}, as well as $(B1)$, $(B2)$ thanks to the non-degeneracy assumption [B4] above.\\
\medskip

We now make use of Theorem 4 in \cite{YoshidaPolynomial2011} to show the convergence of moments for the QMLE. We first extend the definition of $\mathbb{Z}_T(u)$ to any $u \in \reels^n$ by taking $\mathbb{Z}_T(u) $  continuously decreasing to zero outside $U_T$. We need to show that the finite dimensional distribution of $\mathbb{Z}_T$ are convergent, and then that $\text{log}\mathbb{Z}_T$ is tight in $T$, seen as a family of processes in $u \in K $ for any compact set $K$ of $\reels^n$. Because we have the majoration  

$$ \esp \l[|r_T[u]|\r] \leq T^{-\frac{3}{2}}\esp \l[ \sup_{\theta \in \Theta} |\partial_\theta^3 l_T(\theta) ||u|^3 \r] = O\l( \inv{\sqrt T}\r)\to 0,$$ 
the finite dimensional convergence, and thus the LAN property, is a direct consequence of the classical approach given the expressions of $\Delta_T$ and $\Gamma_T$.\\

\medskip

Take then an arbitrary compact set $K$, and put $w_T(\delta) = \sup_{|u_2-u_1| \leq \delta} |\text{log} \mathbb{Z}_T(u_2)-\text{log} \mathbb{Z}_T(u_1)|$ where the supremum is taken over the set $K$. We need to prove that for every $\epsilon >0$,

\beas
 \lim_{\delta \to 0} \sup_T \proba [w_T(\delta) \geq \epsilon] =0.
 \eeas

Using Markov's inequality we first have for amy $p>n$
\beas
\proba [w_T(\delta) \geq \epsilon]&\leq& \epsilon^{-p}\esp|w_T(\delta)|^p\\
&\leq& \epsilon^{-p}\esp \sup_{|u_2-u_1| \leq \delta} \l|l_T(\theta_{u_2})-l_T(\theta_{u_1}) \r|^p.
\eeas

We have
$$l_T(\theta_{u_2}) - l_T(\theta_{u_1})=\sum_{\alpha \in \mathbf{I}}\int_0^T{\text{log}\frac{\lambda^\alpha(s,\theta_{u_2})}{\lambda^\alpha(s,\theta_{u_1})}d\tilde{N}_s^\alpha}-\int_0^T{\l\{\text{log}\frac{\lambda^\alpha(s,\theta_{u_2})}{\lambda^\alpha(s,\theta_{u_1})}-\frac{\lambda^\alpha(s,\theta_{u_2})-\lambda^\alpha(s,\theta_{u_1})}{\lambda^\alpha(s,\theta^{*})}\r\}\lambda^\alpha(s,\theta^{*})ds},$$
that we bound from above in two steps. Defining $M_T^\alpha(\theta_u) = \int_0^T{\text{log}\lambda^\alpha(s,\theta_u)d\tilde{N}_s^\alpha}$, we first write for some $\alpha \in \mathbf{I}$ and $p > n$,

\beas
\esp \sup_{|u_2-u_1| \leq \delta}  \l|\int_0^T{\text{log}\frac{\lambda^\alpha(s,\theta_{u_2})}{\lambda^\alpha(s,\theta_{u_1})}d\tilde{N}_s^\alpha}\r|^p &=& \esp \sup_{|u_2-u_1| \leq \delta}  \l| M_T^\alpha(\theta_{u_2})-M_T^\alpha(\theta_{u_1})\r|^p\\
&\leq& \delta^p T^{-\frac{p}{2}}\esp \sup_{u \in K} \l| \partial_\theta M_T^\alpha(\theta_u) \r|^p\\
&\leq& \K_1 \delta^p
\eeas
where we have applied Sobolev's inequality and Davis-Burkholder-Gundy's inequality at the last step. One can check that the same holds for the integral with respect to Lebesgue measure:

\beas
\esp \l|\sup_{|u_2-u_1| \leq \delta} \int_0^T{\l\{\text{log}\frac{\lambda^\alpha(s,\theta_{u_2})}{\lambda^\alpha(s,\theta_{u_1})}-\frac{\lambda^\alpha(s,\theta_{u_2})-\lambda^\alpha(s,\theta_{u_1})}{\lambda^\alpha(s,\theta^{*})}\r\}\lambda^\alpha(s,\theta^{*})ds} \r|^p &\leq& \K_2 \delta^{p}.
\eeas

 This shows that 

\beas
\sup_{T \in \reels_+} \proba [w_T(\delta) \geq \epsilon]&\leq&  \frac{\K \delta^p}{\epsilon^p} \to 0,\text{  } \delta \to 0.
\eeas

Finally, thanks to the polynomial type large deviation inequality and Lemma 2 in \cite{YoshidaPolynomial2011}, for some $\delta >0$ we have 
\beas
\sup_{T \in \reels_+}\esp \l[ \l(\int_{u:|u| \leq \delta}{Z_T(u)du}\r)^{-1}\r] < \infty.
\eeas

The convergence of moments of the QBE is then a direct consequence of Theorem 8 in \cite{YoshidaPolynomial2011}.
\end{proof}

\subsection{Mixing criteria for ergodicity}

We conclude this theoretical part by giving two mixing criteria that respectively imply the ergodicity conditions [A3] and [B3]. As we shall see in the sequel, it is sometimes easier to check such condition, as is the case for
the Hawkes processes. In order to match with our definition \ref{defErgo}, we say that a process $(X_t)_{t \in \reels_+}$ taking values in some state space $E$ is $C$-mixing, for some set of functions $C$ from $E$ to $\reels$, if for any $\phi$, $\psi \in C$, the following convergence holds 

$$ \rho_{u}= \sup_{ s \in \reels_+} \text{Cov}[\phi(X_s),\psi(X_{s+u})] \to 0,\text{  } |u| \to +\infty $$

\bd
\im[[M1\!\!]] The following two properties hold :
\bd
\im  [Mixing] For any $\alpha \in \mathbf{I}$, $(\lambda^\alpha(t,\theta^{*}),\lambda^\alpha(t,\theta),\partial_\theta \lambda^\alpha(t,\theta))_{t \in \reels_+}$ is $C_b(E,\reels)$-mixing.  
\im  [Stability] For any $\alpha \in \mathbf{I}$, there exists $\bar{\lambda}^\alpha$ such that for any $\theta \in \Theta$,
  $$(\lambda^\alpha(t,\theta^{*}),\lambda^\alpha(t,\theta),\partial_\theta \lambda^\alpha(t,\theta)) \to^d (\bar{\lambda}^\alpha(\theta^{*}),\bar{\lambda}^\alpha(\theta),\partial_\theta \bar{\lambda}^\alpha(\theta)).$$
\ed

\ed

Note that in the stationary case with $C = \{ 1_A | A \in  \mathbf{B}(\reels) \}$, the above condition is nothing more than a mixing assumption in the classical sense (see e.g. \cite{billingsley1965ergodic}) which is a well-known sufficient condition for the ergodicity of the invariant measure. In the same spirit of [B3], we also reformulate [M1] with a minimal rate of convergence in the mixing and the stability equations. 

\bd
\im[[M2\!\!]] There exists $0 < \gamma <\half$ such that :
\bd
\im [Mixing] For any $\alpha \in \mathbf{I}$, $(\lambda^\alpha(t,\theta^{*}),\lambda^\alpha(t,\theta),\partial_\theta \lambda^\alpha(t,\theta))_{t \in \reels_+}$ is $D_\uparrow(E,\reels)$-mixing uniformly in $\theta \in \Theta$. Moreover the rate $\rho$ verifies :
$$\rho_u = o(u^{-\epsilon})  \text{ for some } \epsilon > \frac{2\gamma}{1-2\gamma}.$$ 
\im [Stability] There exists $\bar{\lambda}^\alpha$ such that for any $\psi \in D_\uparrow(E,\reels) $,
  $$\sup_{\theta \in \Theta} t^\gamma\l|\esp[\psi(\lambda^\alpha(t,\theta^{*}),\lambda^\alpha(t,\theta),\partial_\theta \lambda^\alpha(t,\theta))] - \esp[\psi(\bar{\lambda}^\alpha(\theta^{*}),\bar{\lambda}^\alpha(\theta),\partial_\theta \bar{\lambda}^\alpha(\theta))]\r| \to 0.$$
\ed

\ed

The next lemma links the above mixing criteria to the ergodicity assumptions [A3] and [B3] and specifies the nature of the operator $\pi$ in those cases.

\begin{lemma*}
If \textnormal{[M1]}, \textnormal{[A1]} and \textnormal{[A2]}  (resp. \textnormal{[M2]}, \textnormal{[B1]} and \textnormal{[B2]}) are satisfied, then so is the ergodicity condition \textnormal{[A3]} (resp. \textnormal{[B3]}), and moreover the mapping $\pi_\alpha$ writes:

$$\pi_\alpha(\psi, \theta) = \esp [ \psi(\bar{\lambda}^\alpha(\theta^{*}),\bar{\lambda}^\alpha(\theta),\partial_\theta \bar{\lambda}^\alpha(\theta))].$$
\label{lemmaM1}
\end{lemma*}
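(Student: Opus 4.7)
The identification $\pi_\alpha(\psi,\theta) = \esp[\psi(\bar{\lambda}^\alpha(\theta^*), \bar{\lambda}^\alpha(\theta), \partial_\theta \bar{\lambda}^\alpha(\theta))]$ is immediate once the LLN is proved, since the stability clause of [M1] (or [M2]) forces $\esp[\psi(X_s^\theta)]$ to converge to that expression, which must coincide with $\pi_\alpha(\psi,\theta)$ by Cesaro averaging. Writing $X_s^\theta := (\lambda^\alpha(s,\theta^*), \lambda^\alpha(s,\theta), \partial_\theta \lambda^\alpha(s,\theta))$ and $\bar\mu(\theta) := \esp[\psi(\bar{\lambda}^\alpha(\theta^*), \bar{\lambda}^\alpha(\theta), \partial_\theta \bar{\lambda}^\alpha(\theta))]$, the plan is to split the error into a deterministic bias and a centred fluctuation,
\[
\frac{1}{T}\int_0^T \psi(X_s^\theta)\,ds - \bar\mu(\theta) = \frac{1}{T}\int_0^T\bigl(\esp[\psi(X_s^\theta)]-\bar\mu(\theta)\bigr)\,ds + \frac{1}{T}\int_0^T \bigl(\psi(X_s^\theta)-\esp[\psi(X_s^\theta)]\bigr)\,ds,
\]
handling the bias by quantitative stability and Cesaro, and the fluctuation by covariance decay.

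For [A3] from [M1], I fix $\psi \in C_b(E,\reels)$ and $\theta$. The bias is $o(1)$ by Cesaro and stability. The fluctuation is handled in $L^2$ via $\bigl\|T^{-1}\int_0^T(\psi(X_s^\theta)-\esp[\psi(X_s^\theta)])\,ds\bigr\|_2^2 = 2T^{-2}\int_0^T\!\!\int_0^t \mathrm{Cov}[\psi(X_s^\theta),\psi(X_t^\theta)]\,ds\,dt$, which is $o(1)$ by Cesaro, since the integrand is bounded by $\rho_{|t-s|}\to 0$ and uniformly dominated by $4\|\psi\|_\infty^2$. Convergence in $L^2$ gives convergence in probability, which is [A3].

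For [B3] from [M2] the target is the stronger $\sup_{\theta\in\Theta}T^\gamma\|\cdot\|_p\to 0$ for every $p>1$. Because elements of $D_\uparrow(E,\reels)$ are polynomial in $X_s^\theta$ and in its reciprocal coordinates, [B2] yields $\sup_{s,\theta}\|\psi(X_s^\theta)\|_q<\infty$ for every $q$, and the bias is $o(T^{-\gamma})$ uniformly in $\theta$ by the quantitative stability and Cesaro. For the pointwise $L^p$ rate on the fluctuation I would use a block decomposition of $[0,T]$ into blocks of length $T^{1-2\gamma}$: the cross-block covariances are bounded by $\rho_{T^{1-2\gamma}} = o(T^{-(1-2\gamma)\epsilon})$, which by the assumption $\epsilon>2\gamma/(1-2\gamma)$ decays faster than $T^{-2\gamma}$, so a Rosenthal-type inequality applied to the near-independent block averages, together with the uniform $L^q$ control from [B2], yields an $L^p$ rate strictly faster than $T^{-\gamma}$. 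To pass from pointwise to uniform convergence in $\theta$, I apply the Sobolev inequality already used in the paper to the random field $f_T(\theta) := T^\gamma(T^{-1}\int_0^T\psi(X_s^\theta)\,ds - \bar\mu(\theta))$: for $p>n$, $\esp\sup_\Theta|f_T|^p \le C\int_\Theta(\esp|f_T(\theta)|^p + \esp|\partial_\theta f_T(\theta)|^p)\,d\theta$. Under [B1]--[B2], $\partial_\theta$ commutes with $\int_0^T$ and $\partial_\theta f_T$ has the same structure as $f_T$ with $\nabla\psi \in C_\uparrow$ in place of $\psi$ and an extra factor $\partial_\theta^2\lambda^\alpha$ absorbed by [B2], so the same pointwise $L^p$ rate applies. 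Dominated convergence on the bounded set $\Theta$ then gives $\|\sup_\theta|f_T|\|_p\to 0$, and $\sup_\theta\|f_T(\theta)\|_p\le\|\sup_\theta|f_T|\|_p$ delivers [B3].

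The main obstacle will be the $L^p$ control of the fluctuation for arbitrary $p>1$: the naive $L^2$ covariance bound gives only rate $T^{-\epsilon/2}$, which is insufficient when $\gamma$ is close to $1/2$, and straightforward interpolation against the uniform $L^q$ bound degrades as $p$ grows. The block decomposition is exactly where the algebraic hypothesis $\epsilon>2\gamma/(1-2\gamma)$ comes into play: with block length $T^{1-2\gamma}$, both the cross-block mixing error and the within-block Rosenthal contribution decay faster than $T^{-\gamma}$ in every $L^p$, which closes the argument.
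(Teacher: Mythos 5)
Your first half ([M1] plus [A1]--[A2] implies [A3]) is correct and is essentially the paper's argument: the same bias--variance split, the bias handled by stability plus Ces\`aro, the variance written as a double integral of covariances and controlled by splitting at $|t-s|\ge\sqrt{T}$ (equivalently, by Ces\`aro applied to $\rho_{|t-s|}$). The identification of $\pi_\alpha$ is also as in the paper.

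The second half contains a genuine gap. First, a structural point: [B3] asks for $\sup_{\theta}T^{\gamma}\|\cdot\|_p\to 0$, i.e.\ the supremum over $\theta$ sits \emph{outside} the $L^p$ norm, so the Sobolev detour through $\esp\sup_\theta|f_T|^p$ is not needed; worse, it forces you to differentiate $\psi(X_s^\theta)$ in $\theta$, which drags $\partial_\theta^2\lambda$ into the ergodic triplet, and [M2] asserts no mixing or stability for that quantity. The paper avoids this entirely: [M2] is already stated uniformly in $\theta$, so the pointwise weighted $L^2$ bound it derives (splitting the covariance double integral at $|t-s|=T^{\delta}$, giving $o(T^{2\gamma-\epsilon\delta})+O(T^{2\gamma+\delta-1})$ and hence the requirement $2\gamma/\epsilon<1-2\gamma$) is automatically uniform. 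Second, and more seriously, the block/Rosenthal device for $p>2$ is not supported by the hypotheses. The mixing in [M2] is a \emph{two-point covariance} condition on the class $D_\uparrow$: it controls $\mathrm{Cov}[\phi(X_s),\psi(X_{s+u})]$ and nothing else. A Rosenthal-type inequality for ``near-independent blocks'' requires control of higher-order joint moments, e.g.\ of $\esp[Y_{i_1}\cdots Y_{i_k}]$ minus its factorized version, and a product of block integrals is not of the form $\phi(X_s)$ for a single time $s$, so [M2] gives no handle on it (one would need strong mixing of $\sigma$-fields, or cumulant bounds, neither of which is assumed). Two smaller problems in the same step: adjacent blocks contain time points at distance $o(\ell)$, so their covariance is not bounded by $\rho_{\ell}$ without an interleaved small-block scheme; and with $m=T^{2\gamma}$ blocks the Rosenthal main term $m^{-1/2}$ is \emph{exactly} $T^{-\gamma}$, so after multiplying by $T^{\gamma}$ you only get $O(1)$ unless you additionally show the within-block variance is $o(1)$. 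For what it is worth, the paper itself is terse at this very point: it proves the $L^2$ statement in detail and asserts that the $L^p$ case is a ``straightforward consequence of the boundedness of moments,'' i.e.\ an interpolation against the uniform $L^q$ bounds from [B2] --- the route you correctly observe degrades in rate as $p$ grows. So you have put your finger on a real delicacy, but the fix you propose does not go through under [M2] as stated.
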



\begin{table}[h!]

 \centering
  \begin{tabular}{| l | c | c | c |}
   \hline 
    Estimator & Asymptotic QMLE & Exact QMLE & QBE \\
   \hline
    Definition & $l_T(\theta_T) \geq \sup_{\theta \in \Theta} l_T(\theta) - o_\proba(\sqrt{T})$   &$l_T(\theta_T) = \sup_{\theta \in \Theta} l_T(\theta)$  & $\theta_T = \frac{\int_\Theta{\theta \text{exp}(l_T(\theta))p(\theta)d\theta}}{ \int_\Theta{\text{exp}(l_T(\theta))p(\theta)d\theta}}$ \\
   \hline 
    Regularity Assumptions & [A1], [A2] and [A4]& \multicolumn{2}{c|}{[B1], [B2] and [B4]} \\
   \hline
    Ergodicity Assumption & [A3] or [M1]&\multicolumn{2}{c|}{[B3] or [M2]}\\
    \hline
     \multirow{2}{12mm}{Property} &$\esp[f(\sqrt{T}(\theta_T-\theta^*))]\to \esp[f(\Gamma^{-\half}\xi)]$&\multicolumn{2}{c|}{$\esp[f(\sqrt{T}(\theta_T-\theta^*))]\to \esp[f(\Gamma^{-\half}\xi)]$}\\
     &$\forall f \in C_b(\reels^n,\reels)$   & \multicolumn{2}{c|}{$\forall f \in C_p(\reels^n,\reels)$}\\
   \hline
  \end{tabular}
  \caption{Summary of the asymptotic properties of the three estimators presented in Section 3. In each definition, $\theta_T$ represents the estimator itself. $C_b(\reels^n,\reels)$ (resp. $C_p(\reels^n,\reels)$) is the set of functions from $\reels^n$ to $\reels$ which are continuous and bounded (resp. continuous and of polynomial growth) .  }

\label{tableSummary}
 \end{table}

\section{Applications}

As mentioned in the introduction, a fruitful approach to Limit Order Book process modelling consists in focusing on the point process that counts different events of interest occurring over time. More precisely, recall that in a simplified representation of the continuous-time double auction system any agent interacts with the market through three types of orders :
\begin{itemize}
\im Limit order : Submit a buy (resp. sell) order at a lower (resp. higher) price than the best ask (resp. bid) price. The order is immediately inserted in the corresponding queue (regarding the price level $p$ at which the order has been sent).
\im Market order : Consume the liquidity at the best ask (resp. bid) price for a buy (resp. sell) order.
\im Cancellation order : Remove one limit order that is waiting in the LOB.
\end{itemize}

 In practice every order is characterized by a volume $v$ that represents the amount of shares one is willing to buy or sell, and a price level $p$ that is constrained to belong to a \textit{price grid} $\calg \subset \{k.\Delta p | k \in \naturels \}$. $\Delta p$ is the smallest distance between two price limits and is called the \textit{ticksize}. Let us then call a (finite dimensional) Limit Order Book a process that is constituted of $m \in \naturels$ queues stored in $\mathbb{X}(t) \in \relatifs^{m}$. At time $t \in \reels_+$, $\mathbb{X}_t^\alpha$ contains the total volume of limit orders that have been submitted through time at a given level and that have not been executed yet. As sellers should always submit limit orders at higher prices than buyers, A Limit Order Book can always be split into two distinct parts : The queues associated to selling limit orders, at higher prices, or the \textit{ask side}, and the queues at lower prices, that contain buying limit orders, or the \textit{bid side}. It is also common to call \textit{spread} the empty zone between the best limits, that is the limits laying between the highest bid and the lowest ask. \\
\medskip

 As the number of queues should be potentially infinite, such modelling is not straightforward, and many representations exist in the literature. Most of those representations can be actually classified into two main families : The \textit{absolute} LOB's associate to each price level $p \in \calg$ a label $\alpha \in \naturels$ and for the whole life of the process $\mathbb{X}$, $\mathbb{X}_\alpha$ will contain the volume of latent limit orders that have been submitted at this price level. It is common to count positively the limit orders on the \textit{ask side}, and negatively those on the \textit{bid side}. Such representations only involve a finite price range $\{k_1 \Delta_p,..., k_2 \Delta_p\}$, and are thus satisfactory provided that most events occur in this window. An illustration of this model can be found in \cite{ContStoikovTalrejaStochastic2010}. On the other hand, the \textit{relative} LOB's are generally centered around a \textit{reference price}, that is a price that either plays a role for the shape of the LOB itself, or has a good economical interpretation. As an example, the former is often taken as the best ask price or the best bid price, see \cite{AbergelLOB2013}. The latter can be an underlying unobserved process that is assumed to be the \textit{true price}, that is the value toward which the market price would tend if, say, the information was perfectly shared. In any case, the relative representation associates a label $\alpha$ to each price level relatively to its position compared to the \textit{reference price}. The general framework introduced in \cite{huang2015ergodicity} is based on this representation. Therefore, as such price is typically stochastic, any price movement shifts the labelling to the left or to the right as time passes. Both representations have their pros and cons, that we will not discuss here. In the following we are not concerned with this technical choice since the statistical procedure remains the same. \\

\medskip

 Since any order is characterized by a (random) volume $v$, the jumps of $\mathbb{X}$ need not be of size $1$. To conduct the QLA through a point process perspective, it is quite common to simply decompose $\mathbb{X}$ through the arrival of orders of different types, and ignore in a first approximation the size of those orders. We thus define   

\begin{itemize}
\im An $m$-dimensional point process $L$ that counts the limit orders, compensated by $\Lambda^L$.
\im An $m$-dimensional point process $C$ that counts the cancellation orders, compensated by $\Lambda^C$.
\im A $2$-dimensional point process $M$ that counts the market orders, compensated by $\Lambda^M$.
\end{itemize}

Note that it is possible to take into account the size of the orders by splitting again every point process according to the volume of the orders. A general approach would therefore consist in constructing a point process that counts the orders of type $Y \in \{C,L,M\}$, at price level $\alpha \in \{1,...,m\}$, and of volume $v \in \{1,...,\bar{v}\}$ for any triplet $(Y,\alpha,v)$. Such decomposition is done in a Markovian framework in \cite{huang2015ergodicity}. In practice, $\bar{v}$ is large and it is thus preferable for dimension considerations to construct $K$ zones for the volume $V_1,...,V_K$ and collect the orders whose volume belongs to a given zone. Other criteria are sometimes taken into account when considering the modelling of an Order Book. In \cite{toke2015stationary}, the author shows that aggressiveness of orders is also a very important feature when it comes to describing the stationary distribution of best limits sizes. Basically, an order is said to be aggressive if it is a market order that consumes all the liquidity at one of the best limits, or if it is a limit order placed inside the spread.\\
\medskip

It is also worth noting that the knowledge of $\mathbb{X}$ is sufficient to reconstruct the process $L$ since it counts the jumps that increase the absolute value of the size of one queue. On the other hand, market orders and cancellation orders at best limits cannot be distinguished from the knowledge of $\mathbb{X}$ alone. Practically speaking, when working with real data, it is crucial to assume that the events that diminish the size of one queue are labelled in one way or the other so one can distinguish market and cancellation orders. This is possible if for example one has access to the record of trades on the market. Other practical issues, such as hidden liquidity, or cross trades can also make the data analysis difficult. \\
\medskip

Let us assume from now on that the LOB $\mathbb{X}$ and the point process $N$ that gathers the events of interest are constructible, and fully observable. We focus on the parametrization of the intensities along with the ergodicity of the models existing in the literature.  

\subsection{Markovian models}

Some models assume the existence of an observable underlying process $(Y_t)_t$ adapted to the filtration $\F=(\calf_t)_{t\in\bbR_+}$ such that the parametrized intensity process writes 

\beas
	\lambda(t, \theta) &=& h(Y_t,\theta). 
\eeas

\begin{example*}
Consider first the very simple following model, inspired of \cite{ContLarrardPrice2013}. The LOB $(\mathbb{X}(t))_t$ is represented in a level-I perspective (best ask and best bid limits only). It is assumed that every time one queue gets empty, it is randomly regenerated. This accounts for the volume that is stored in the new best limit, that is the first non-empty limit after the one that just depleted, and that is inaccessible in this two dimensional representation. The authors postulate that all the intensities are constant, and thus $(L_t,M_t,C_t)_t$ is a $6$-dimensional Poisson process :

\beas
\lambda(t) &=& \lambda_0.
\eeas 

The multivariate Poisson process being obviously ergodic, this toy model trivially enjoys all the properties derived in Theorem \ref{thmQLA}.  
\end{example*}

\begin{example*}
In \cite{AbergelLOB2013}, Abergel and Jedidi define a multidimensional LOB whose cancellation intensities are linear functions of the size of the queues of the LOB itself, and other intensities remain constant. In other words, any intensity at a given level $\alpha \in \mathbf{I}$ is of the form 

\beas
\lambda^{L,\alpha}(t) &=& \lambda_0^{L,\alpha},\\
\lambda^{C,\alpha}(t) &=& \lambda_0^{C,\alpha} |\mathbb{X}_\alpha(t-)|, 
\eeas
and the market orders intensities on the best limits write 
\beas
\lambda^{M,\text{Bid}}(t) =  \lambda_0^{M,\text{Bid}},\\
\lambda^{M,\text{Ask}}(t) = \lambda_0^{M,\text{Ask}}.
\eeas

Such model presupposes that market agents are independent, and that every order that has been posted is cancelled after exponential random times. In their paper, Abergel and Jedidi prove that such feedback mechanism along with the condition $\lambda_0^{C,\alpha} >0$ for all $\alpha \in \mathbf{I}$ are sufficient to show that  $(\mathbb{X}_t)_t$ is a $V$-geometric Markovian process (see \cite{MeynTweedieMarkovChain2009,MeynTweedieprocessiii1993} for a deep insight of this notion).  
\label{exampleAbergel1}
\end{example*}

\medskip

\medskip
The above mentioned models and their variants are reputed for their mathematical tractability. For example, analytical expressions of quantities of interest that quantify the shape of the Limit Order Book are derived in \cite{MuniTokeQueuing2013}. Needless to say that being able to write closed formulas is generally admitted to be a significant criterion for practical considerations. On the other hand, it is also equally admitted that such Poisson process-based models perform poorly when it comes to closely reproducing most stylized facts observed on the markets, as it is shown in \cite{RosenbaumQueue2014}. In particular, the lack of dependency between the successive orders, and between the queues is one of the major drawbacks of those basic models. The following example is a quite general Markovian framework that allows such dependencies.

\begin{example*}
In \cite{RosenbaumQueue2014} and \cite{huang2015ergodicity}, the model described in Example \ref{exampleAbergel1} is generalized. $\mathbb{X}$ is defined as a general Pure Jump-Type Markovian process, as in Example \ref{purejumpex}. Recall that the intensity vector is therefore a pure function of the LOB state :
\beas
\lambda(t,\theta) &=& h(\mathbb{X}(t-),\theta).
\eeas

Whatever the form of $h$, the authors establish the $V$-geometric ergodicity of the Markovian process $\mathbb{X}$  in the case where cancellation intensities become predominant whenever the size of one limit gets too large. 
\end{example*}


\subsection{Multivariate Hawkes process}

In this section we are interested in the special case of multivariate Hawkes process with exponential kernel as Hawkes process-based models have been applied to various topics in finance, notably to Limit Order Books. As we shall see in the remaining part of this paper, it turns out that the full QLA is applicable to such class of models. Both computation of the MLE and simulation methods for the exponential Hawkes process can be found in \cite{ozaki1979maximum}.\\
\medskip

Those processes were introduced by Hawkes in 1971, see \cite{HawkesPointSpectra1971} and were extensively used to model earthquakes and their aftershocks. Lately they have been used in finance to model various phenomena such as price variation, market impact, and Limit Order Book mechanisms. \cite{BacryMastromatteo2015} provides a very complete summary of the different applications of Hawkes processes in finance. Let us mention in particular the use of Hawkes processes in \cite{muniToke2010} to model the interaction between liquidity providers and liquidity takers. In this work a level-I LOB is described, in which the point process gathering limit and market orders is modelled as a multivariate Hawkes process. The use of those self-exciting processes highlights the fact that orders seem to trigger each other. It quantifies to what extent self excitations and mutual excitations have an impact on the interarrival time of order submissions, and their relative importance. For example, it is shown in \cite{muniToke2010} that market orders (and thus liquidity takers) strongly influence limit orders (liquidity providers), but limit orders, in turn, have a negligible impact on market orders. Recently, a more theoretical work by Abergel and Jedidi \cite{AbergelHawkes2015} establishes $V$-geometric ergodicity and scaling limits in a Hawkes-based Markovian framework applied to a general LOB. It is basically a generalization of Example \ref{exampleAbergel1}, where the Poisson assumption on limit orders and market orders is relaxed. More specifically, keeping the same notations as above, instead of a mere Poisson process, the vector $(L,M)$ is supposed to form a multivariate Hawkes process while the dynamic of $C$ remains unchanged. Motivated by those examples whose detailed descriptions can be found in the abovementioned papers, we turn to the definition of a general Hawkes process. \\

\medskip
 Let $N_t = (N_t^\alpha)_{\alpha \in \mathbf{I}}$, $\mathbf{I} = \{1,...,d\}$, $N_0 = 0$, be a multidimensional point process and write $\F^N=(\calf_t^N)_{t\in\bbR_+}$, where we recall that $\calf_t^N = \sigma\{N_s | 0 \leq s \leq t \}$ is the canonical filtration of $N$. We say that $N$ is a linear Hawkes process or Hawkes' self-exciting process starting from $0$ if there exist $h : \reels_+ \to \reels_+^{d \times d}$ and $\nu \in (\reels_+^{*})^d$  such that the $\calf_t^N$-intensity $\lambda(t)$ of $N$ writes 

\bea
\lambda^\alpha(t) = \nu_\alpha +\sum_{\beta \in \mathbf{I}}\int_0^{t-}{h_{\alpha\beta}(t-s)dN_s^\beta} \text{, } \alpha \in \mathbf{I}.
\eea
 The baseline intensities $\nu_\alpha$'s represent the rate of spontaneous occurences of events, while the kernels $h_{\alpha\beta}$'s model self-interaction in the system. Indeed, if a shock occurs at time $t_0$ on the covariate $N^\beta$, an aftershock will happen on the covariate $N^\alpha$ around time $t_1$ with high probability if $h_{\alpha\beta}(t_1-t_0)$ is large. When $h_{\alpha\beta} = 0$, the covariate $N^\beta$ has no influence on the chain of events related to $N^\alpha$.  For example, in the model from \cite{AbergelHawkes2015}, and in accordance with the empirical results found in \cite{muniToke2010}, the mutual excitation structure of the process $(L,M)$ is assumed to be of the form ``MM-LL-LM'', i.e market orders excite both market orders and limit orders, whereas limit orders only trigger themselves. \\
\medskip

 Let us define the matrix $\Phi = [\phi_{\alpha\beta}]_{\alpha\beta}$ where
\beas
\phi_{\alpha\beta} = \int_0^{+\infty}{h_{\alpha\beta}(s)ds},
\eeas
and write $\rho(\Phi)$ its spectral radius. We also define the elementary excitations as 

$$\epsilon_{\alpha\beta}(t) =\int_0^{t-}{h_{\alpha\beta}(t-s)dN_s^\beta},$$
that we gather in the matrix $\cale(t) = \l[ \epsilon_{\alpha\beta(t)}\r]_{\alpha\beta}$. Finally, given $A = [a_{\alpha\beta}]_{\alpha\beta}$ and $C =[c_{\alpha\beta}]_{\alpha\beta} \in \reels_+^{d \times d}$, we say that $N$ is an exponential Hawkes process if the kernel functions $h_{\alpha\beta}$ are of the form 

$$h_{\alpha\beta}(s) = c_{\alpha\beta}e^{-a_{\alpha\beta}s}. $$

 Note that the  matrix $\Phi$ has the representation $\Phi = \l[ \frac{c_{\alpha\beta}}{a_{\alpha\beta}}\r]_{\alpha\beta} $ in that case. We now recall a few results about the Hawkes process and its exponential form.

\begin{proposition*}

Let $N$ be a multivariate Hawkes process, and assume that $\rho(\Phi) < 1$. Then :
\bd
\im [(i)] There exists a probability space $(\Omega^{'}, \calf^{'}, \proba^{'})$ and two point processes $N^{'}$ and $\bar{N}$ defined on $\Omega^{'}$ and on the whole real line such that $N^{'}_{|\reels_+}$ is distributed as $N$, and $\bar{N}$ is a stationary point process whose $\calf_t^{\bar{N}}$-intensity $\bar{\lambda}$ verifies :
$$ \bar{\lambda}^\alpha(t) =  \nu_\alpha +\sum_{\beta \in \mathbf{I}}\int_{-\infty}^{t-}{h_{\alpha\beta}(t-s)d\bar{N}_s^\beta}.$$
\im [(ii)] Let $\mathbf{S}$ be the shift operator, meaning that for any $t \in \reels_+$, $\mathbf{S}_tN = (N_{s+t})_{s \in \reels_+}$. Then $N$ is stable in the following sense :

$$\mathbf{S}_t N \to^{\mathbf D} \bar{N}_{| \reels_+},  $$
where $\mathbf{D}$ designates the weak convergence associated to the vague topology on integer valued measures. 

\im [(iii)] If $N$ is an exponential Hawkes process, there exist $q > 0$ and $A > 0$ such that, for any $\alpha \in \mathbf{I}$, for any $t \in \reels_+$,

 $$ \l\| \lambda^\alpha(t) - \bar{\lambda}^\alpha(t)\r\|_1 \leq Ae^{-qt}.$$
\ed

\label{stability}
\end{proposition*}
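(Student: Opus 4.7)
The three parts admit a common starting point: the Hawkes-Oakes Poisson cluster representation, which under the subcriticality condition $\rho(\Phi) < 1$ constructs a Hawkes process as the superposition of a Poisson immigrant process together with independent branching clusters of descendants generated through the kernel $h$. For (i), the plan is to let immigrants of type $\beta$ form a Poisson process of intensity $\nu_\beta$ on the whole real line, and to couple $\bar{N}$ with $N^{'}$ by defining $N^{'}$ via the same realization restricted to immigrants in $\reels_+$ together with the clusters they generate. The condition $\rho(\Phi) < 1$ guarantees that every cluster is almost surely finite with integrable size, so both processes are well defined. By translation invariance of the immigrant Poisson process and of the branching mechanism, $\bar{N}$ is stationary and its stochastic intensity satisfies the required infinite-horizon convolution equation; the distributional identification of $N^{'}$ with $N$ on $\reels_+$ then follows from uniqueness in law of a Hawkes process given its intensity representation.

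For (ii), the coupling of (i) yields $N^{'} \leq \bar{N}$ as integer-valued measures on $\reels_+$, and the excess $\bar{N} - N^{'}$ consists exactly of points of clusters rooted at immigrants with arrival times in $(-\infty,0)$. Since $\rho(\Phi) < 1$, each such cluster has almost surely finite support, so for any bounded interval $[0,T]$ the probability that $(\bar{N} - N^{'})([t, t+T])$ is non-zero tends to $0$ as $t \to \infty$. Combined with stationarity, which gives $\mathbf{S}_t \bar{N} \stackrel{d}{=} \bar{N}_{|\reels_+}$, this yields the vague convergence $\mathbf{S}_t N \to^{\mathbf{D}} \bar{N}_{|\reels_+}$ through approximation by continuous bounded cylinder functionals.

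For (iii), the same coupling gives $\lambda^\alpha(t) \leq \bar{\lambda}^\alpha(t)$ almost surely, so the $L^1$-norm reduces to a difference of expectations,
\begin{equation*}
\|\lambda^\alpha(t) - \bar{\lambda}^\alpha(t)\|_1 = \esp[\bar{\lambda}^\alpha(t)] - \esp[\lambda^\alpha(t)].
\end{equation*}
Setting $u^\alpha(t) = \esp[\lambda^\alpha(t)]$ and $m^\alpha = ((I-\Phi)^{-1}\nu)^\alpha$, the function $v^\alpha(t) := m^\alpha - u^\alpha(t)$ satisfies the linear Volterra equation $v = h \ast v + g$ on $\reels_+$, with forcing $g^\alpha(t) = \sum_{\beta \in \mathbf{I}} m^\beta \int_t^\infty h_{\alpha\beta}(s)\, ds$. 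In the exponential case, $g$ decays as $O(e^{-q_0 t})$ with $q_0 = \min_{\alpha,\beta} a_{\alpha\beta}$. The main obstacle, and the only step that is not routine, is to show that the matrix resolvent $r$ of $h$ itself decays exponentially; this is done by passing to Laplace transforms, noting that for exponential kernels $\hat{h}(z)$ is rational with $\rho(\hat{h}(0)) = \rho(\Phi) < 1$, so by continuity of the spectral radius $(I - \hat{h}(z))^{-1}$ remains analytic in a small strip $\{\Re z > -q\}$ for some $q>0$. Inverting the Laplace transform, $r$ inherits exponential decay, and the identity $v = g + r \ast g$ then yields the claimed bound $\|\lambda^\alpha(t) - \bar{\lambda}^\alpha(t)\|_1 \leq A e^{-qt}$.
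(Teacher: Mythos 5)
Your proposal is correct, but it follows a genuinely different route from the paper's. For (i) and (ii) the paper simply invokes Theorem 1 and Lemma 4 of Br\'emaud and Massouli\'e's stability paper, whose construction is a Poisson-embedding argument valid beyond the linear case; you instead use the Hawkes--Oakes cluster (immigration--branching) representation, which is specific to linear Hawkes processes but self-contained and, crucially, produces a \emph{monotone} coupling $N' \le \bar{N}$. That monotonicity is where your proof of (iii) diverges most sharply from the paper's: the paper keeps the Br\'emaud--Massouli\'e coupling, in which $|N-\bar{N}|$ is a point process with compensator $|\lambda-\bar{\lambda}|$, derives the componentwise inequality $f \le r + h \ast f$ for $f_t^\alpha = \| \lambda^\alpha(t)-\bar{\lambda}^\alpha(t)\|_1$, and then appeals to the multivariate renewal theorem (Asmussen, Ch.~VII, Thm~4.6), which requires $\Phi$ to be irreducible and the existence of $q$ with $\rho(\Phi_q)=1$; your sign information turns the $L^1$ distance into a difference of two means, reducing everything to the deterministic Volterra equation $v = g + h\ast v$, which you solve by Laplace transforms. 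Your route buys independence from the irreducibility of $\Phi$ (the paper's argument tacitly uses $c_{\alpha\beta}>0$ at that step) and dispenses with the renewal machinery; the paper's route buys a bound that does not lean on the linearity/cluster structure of the coupling. Two small points to tighten in your version: for complex $z$ the invertibility of $I-\hat{h}(z)$ on a strip should be justified not by ``continuity of the spectral radius'' alone but by the domination $|\hat{h}_{\alpha\beta}(z)|\le \hat{h}_{\alpha\beta}(\mathrm{Re}\,z)$ combined with the monotonicity of $\rho$ on entrywise-nonnegative matrices, and the partial-fraction inversion can produce terms of the form $t^k e^{-qt}$, which you absorb by slightly decreasing $q$.
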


See \cite{BremaudStability1996} for deeper explanations about this mode of convergence. The first two points of Proposition \ref{stability} are immediate consequences of Theorem $1$ and Lemma $4$ in \cite{BremaudStability1996} along with the assumption on $\rho(\Phi)$. Note that they do not require $N$ to be exponential. In practice, the first point allows us to assume the existence of a stationary version of $N$ on the same probability space, and the second assertion states that $N$ tends in distribution to $\bar{N}$ for a certain topology. Finally, the last point states that in the exponential case, the $\mathbb{L}^1$ deviation of the transient intensity from its stationary version is, not surprisingly, exponentially decreasing.

\medskip

As is well known, The process $\cale$ is Markovian if the kernel are of exponential form. It also enjoys a very strong ergodicity property, that is introduced in the next proposition. Following the definitions in \cite{MeynTweedieprocessiii1993}, we say that a continuous time Markovian Process $X$ is $V$-geometrically ergodic if, denoting by $P$ its transition kernel, there exists some norm-like function $V$ on the state space of $X$, $B < +\infty$, and $r < 1$ such that :
		\bea
		\label{Vergo}
		\l\|P^t(x,.) - P^{\bar{X}}\r\|_V &\leq& B(V(x)+1)r^t,
		\eea
		for any initial state $x$, and any $t \in \reels_+$, where $P^{\bar{X}}$ is the stationary law of $\bar{X}_t$. Here, $\l\|.\r\|_V$ designates the $V$-variation norm (See \cite{MeynTweedieMarkovChain2009}), that is, for any measure $\mu$ on a measured space $(S, \cals)$, 
		\bea
		\|\mu\|_V = \sup_{\psi | \psi \leq V} \l| \int_S \psi(s)\mu(ds) \r|.
		\label{Vnorm}
		\eea

		
		

\begin{proposition*}
\label{propVgeo}
Assume that $\rho(\Phi) < 1$. Then :
\bd
\im [(i)] The elementary excitation process $\cale$ is a Markovian process taking values in $\reels_+^{d \times d}$.
\im [(ii)] $\cale$ is $V$-geometrically ergodic, and moreover $V : \reels_+^{d \times d} \to \reels_+$ can be chosen as $V(\epsilon) = e^{\langle M, \epsilon \rangle}$ for some $M = [m_{\alpha\beta}]_{\alpha\beta} \in \reels_+^{d \times d}$ and with $\langle M, \epsilon \rangle = \sum_{\alpha,\beta \in \mathbf{I}}{m_{\alpha\beta} \epsilon_{\alpha \beta}}$.    
\ed
\end{proposition*}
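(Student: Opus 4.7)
For (i), a direct inspection of the dynamics suffices. Between consecutive jumps of $N$, each coordinate $\epsilon_{\alpha\beta}(t)$ satisfies the linear ODE $\dot\epsilon_{\alpha\beta}(t) = -a_{\alpha\beta}\epsilon_{\alpha\beta}(t)$, while a jump of $N^\beta$ at time $t$ deterministically increments $\epsilon_{\alpha\beta}$ by $c_{\alpha\beta}$ for every $\alpha \in \mathbf{I}$. Since the intensity $\lambda^\beta(t) = \nu_\beta + \sum_\gamma \epsilon_{\beta\gamma}(t)$ depends on the past only through $\mathcal{E}(t)$, the process $\mathcal{E}$ is a piecewise deterministic Markov process on $\reels_+^{d\times d}$, whose extended generator acts on smooth test functions $V$ as
\[
\mathcal{A}V(\epsilon) = -\sum_{\alpha,\beta} a_{\alpha\beta}\epsilon_{\alpha\beta}\partial_{\epsilon_{\alpha\beta}}V(\epsilon) + \sum_\beta \lambda^\beta(\epsilon)\bigl(V(\epsilon + \Delta_\beta) - V(\epsilon)\bigr),
\]
where $\Delta_\beta \in \reels_+^{d \times d}$ has $(\alpha,\beta)$-entry equal to $c_{\alpha\beta}$ and zeros elsewhere.

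For (ii), my plan is to apply the continuous-time Foster--Lyapunov drift criterion (see Theorem~6.1 in \cite{MeynTweedieprocessiii1993}) by exhibiting a Lyapunov function $V$, constants $c > 0$, $b < \infty$, and a petite set $K$ such that $\mathcal{A}V \leq -cV + b\mathbf{1}_K$. For the candidate $V(\epsilon) = e^{\langle M, \epsilon\rangle}$, the generator from (i) gives
\[
\frac{\mathcal{A}V(\epsilon)}{V(\epsilon)} = -\sum_{\alpha,\beta}a_{\alpha\beta}m_{\alpha\beta}\epsilon_{\alpha\beta} + \sum_\beta \Bigl(\nu_\beta + \sum_\gamma\epsilon_{\beta\gamma}\Bigr)\bigl(e^{\sum_\alpha m_{\alpha\beta}c_{\alpha\beta}}-1\bigr),
\]
so it suffices to pick $M \in \reels_+^{d\times d}$ making this expression diverge to $-\infty$ as $|\epsilon|\to \infty$. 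Because $\rho(\Phi) < 1$, the Neumann series $\sum_{k \geq 0} (\Phi^T)^k$ converges, hence $u := (I-\Phi^T)^{-1}\mathbf{1}$ is well-defined, strictly positive, and satisfies $(u^T\Phi)_\beta = u_\beta - 1$ for each $\beta \in \mathbf{I}$. Setting $m_{\alpha\beta} := \eta\, u_\alpha / a_{\alpha\beta}$ with $\eta > 0$ small, the coefficient of each $\epsilon_{ij}$ in $\mathcal{A}V/V$ equals $-\eta u_i + e^{\eta(u_i-1)} - 1 = -\eta + O(\eta^2)$, which is strictly negative uniformly in $i,j$. This yields $\mathcal{A}V/V \leq C_0 - c_0|\epsilon|$ for some $c_0 > 0$, and hence the required drift inequality outside a sufficiently large compact set.

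The main remaining obstacle is to verify the petite set condition, namely that every compact subset of $\reels_+^{d\times d}$ is petite for the Markov semigroup of $\mathcal{E}$. The essential ingredients are the exponential contraction of the deterministic flow toward the origin and the strict positivity of the baseline intensities $\nu_\beta > 0$, which ensures that each coordinate of $N$ can jump from any state. Using these, a standard PDMP argument, closely modelled on the Hawkes-driven LOB analysis of \cite{AbergelHawkes2015}, shows that starting from any compact set the process reaches an arbitrarily small neighborhood of the origin with positive probability in bounded time, and that such a neighborhood supports a minorization condition for an appropriate $t$-skeleton chain. Combined with $\psi$-irreducibility and aperiodicity (also inherited from the PDMP structure), the minorization together with the drift inequality allow Theorem~6.1 of \cite{MeynTweedieprocessiii1993} to conclude, yielding the claimed $V$-geometric ergodicity with $V(\epsilon) = e^{\langle M, \epsilon\rangle}$.
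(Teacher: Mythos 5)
Your part (i) and the drift computation in part (ii) are correct and follow essentially the paper's own route: same Lyapunov candidate $V(\epsilon)=e^{\langle M,\epsilon\rangle}$, same generator, same appeal to Theorem 6.1 of Meyn--Tweedie. Your choice $m_{\alpha\beta}=\eta u_\alpha/a_{\alpha\beta}$ with $u=(I-\Phi^T)^{-1}\mathbf{1}$ is a valid and arguably cleaner alternative to the paper's Perron--Frobenius eigenvector $\kappa$ of $\Phi^T$ (the computation $-\eta u_i+e^{\eta(u_i-1)}-1=-\eta+O(\eta^2)$ checks out, since $u\geq\mathbf{1}$ componentwise and the index set is finite), and it avoids invoking irreducibility of $\Phi$ at this stage.

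The genuine gap is the petite-set condition, which you flag as "the main remaining obstacle" and then dispatch with "a standard PDMP argument." This is precisely the part the paper cannot treat as standard: $\cale$ lives in $\reels_+^{d\times d}$, a $d^2$-dimensional space, but is driven by only $d$ counting processes, so it is not at all automatic that the transition kernel has a nontrivial continuous (or absolutely continuous) component. Your sketch --- reach a small neighborhood of the origin with positive probability, then claim a minorization there --- does not close this: reaching a neighborhood of a point does not produce a minorization unless the kernels started from that neighborhood share a common nontrivial component, and exhibiting that component is exactly the hard step. The paper's Lemma on the T-chain property does this by conditioning on a specific pattern of $d^2$ jumps (the first $d$ on $N^1$, the next $d$ on $N^2$, etc.), changing variables from the jump times to the state, and checking that the Jacobian --- a product of Vandermonde-type determinants --- is nonzero when the jump times are clustered; this also forces a case analysis (the kernel genuinely degenerates when $a_{ik}=a_{jk}$ or $c_{ij}=0$, requiring a reduction of the state space, handled in a separate remark). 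Moreover, note that the origin is not actually reached in finite time by the deterministic flow, and aperiodicity/$\psi$-irreducibility, which you assert are "inherited from the PDMP structure," are in the paper consequences of the T-chain property plus reachability of $0$, not prerequisites obtained for free. Without an argument at the level of detail of that lemma, part (ii) is not established.
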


By virtue of the Markovian property and the expression (\ref{Vnorm}), (\ref{Vergo}) thus writes for any $0 \leq s \leq t$,

\bea
\sup_{\psi | \psi \leq V}\l|\esp \l[ \psi(\cale_t)| \calf_{s}^{\cale}\r] - \esp \l[\psi(\bar{\cale})\r]\r|\leq B(V(\cale_s)+1)r^{t-s} 
\label{eqVgeoEpsilon}
\eea
where $\bar{\cale}$ follows the stationary distribution of the process $\cale$.\\

\medskip

We now consider an exponential Hawkes process as a model parametrized by the triplet $(\nu, C, A) \in \reels_+^d \times \reels_+^{d\times d} \times \reels_+^{d\times d}$. More precisely we consider a relatively compact open parameter state $\Theta \subset  \reels_+^d \times \reels_+^{d\times d} \times \reels_+^{d\times d}$ such that for any triplet $ \theta = (\nu, C, A) \in \Theta$, for any $\alpha, \beta \in \mathbf{I}$,

\beas
0 &<& \underline{\nu} \leq \nu_\alpha \leq \bar{\nu} < +\infty, \\
0 &<& \underline{c} \leq c_{\alpha\beta}  \leq \bar{c} < +\infty,\\
0 &<& \underline{a} \leq a_{\alpha\beta}  \leq \bar{a} < +\infty.\\
\eeas

In particular, we assume for the sake of identifiability that the $c_{\alpha\beta}$ are positive. Let us assume, in the spirit of the first part, that there exists $\theta^{*}=(\nu^{*},C^{*},A^{*}) \in \Theta$, such that $\theta^{*}$ is the real parameter that drives the dynamic of $N$. We are now ready to state the main result of this section.
\begin{theorem*}
The exponential Hawkes model verifies the conditions \textnormal{[B1], [B2], [M2],} and \textnormal{[B4]}. In particular, by virtue of Theorem \ref{thmQLA} and Lemma \ref{lemmaM1}, if $\hat{\theta}_T = (\hat{\nu}_T, \hat{C}_T, \hat{A}_T)$ is the QMLE and $\tilde{\theta}_T = (\tilde{\nu}_T, \tilde{C}_T, \tilde{A}_T)$ the QBE of the model, we have
	$$\esp \left[ f(\sqrt{T}(\hat{\theta}_T-\theta^{*})) \right] \to \esp[f(\Gamma^{-\frac{1}{2}}\xi)],$$
	$$\esp \left[ f(\sqrt{T}(\tilde{\theta}_T-\theta^{*})) \right] \to \esp[f(\Gamma^{-\frac{1}{2}}\xi)],$$ 
	where $\Gamma$ is the Fisher information, $f$ is any continuous function of polynomial growth, and $\xi$ follows a standard normal distribution.
\label{thmHawkesQLA}
\end{theorem*}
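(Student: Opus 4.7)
The plan is to verify each of the four conditions \textnormal{[B1]}, \textnormal{[B2]}, \textnormal{[M2]} and \textnormal{[B4]} for the exponential Hawkes parametrization, after which the conclusion follows immediately from Theorem \ref{thmQLA} combined with Lemma \ref{lemmaM1}. The common engine for the first three conditions is the $V$-geometric ergodicity of the elementary excitation process $\cale$ established in Proposition \ref{propVgeo}, while \textnormal{[B4]} is of a different nature and is the main obstacle.

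Conditions \textnormal{[B1]} and \textnormal{[B2]} are essentially bookkeeping, thanks to the explicit formula
\begin{equation*}
\lambda^\alpha(t,\theta)=\nu_\alpha+\sum_{\beta\in\mathbf{I}}c_{\alpha\beta}\int_0^{t-}e^{-a_{\alpha\beta}(t-s)}dN_s^\beta.
\end{equation*}
Left-continuity in $t$ follows from the $t-$ convention, $C^4$ smoothness in $\theta$ with continuous extension to $\overline{\Theta}$ is manifest, and the uniform floor $\lambda^\alpha(t,\theta)\geq\underline{\nu}>0$ handles \textnormal{[B2](ii)} and makes \textnormal{[B2](iii)} vacuous. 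For the moment bound \textnormal{[B2](i)}, every $\partial_\theta^i\lambda^\alpha$ is a finite sum of terms of the form $\int_0^{t-}(t-s)^k e^{-a_{\alpha\beta}(t-s)}dN_s^\beta$ with $k\leq i\leq 4$, which I dominate uniformly in $\theta$ by a constant times $\int_0^{t-}e^{-\underline{a}(t-s)/2}dN_s^\beta$ using the bound $(t-s)^k e^{-\underline{a}(t-s)/2}\leq C_k$. The latter auxiliary excitation is itself a component of a Hawkes-type process whose $V$-geometric ergodicity with Lyapunov function $V(\epsilon)=\exp\langle M,\epsilon\rangle$ yields finite exponential moments, hence uniform polynomial moments of every order.

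For \textnormal{[M2]}, I enlarge the state space and work with the joint process $X_t=(\cale(t;\theta^*),\cale(t;\theta))$, which is Markovian because the jump intensity of $N$ depends only on $\cale(t;\theta^*)$ while $\cale(t;\theta)$ evolves as a deterministic exponential flow between jumps with prescribed jump amplitudes. The proof of Proposition \ref{propVgeo} carries over to $X$ with a single Lyapunov function chosen uniformly over $\theta\in\Theta$ using the global bounds $\underline{a}\leq a_{\alpha\beta}\leq\bar{a}$. Applying (\ref{eqVgeoEpsilon}) to $X$ yields exponential decay, uniform in $\theta$, of both $\text{Cov}(\phi(X_s),\psi(X_{s+u}))$ and $|\esp[\psi(X_t)]-\esp[\psi(\bar{X})]|$ for bounded $\phi,\psi$. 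Since the triplet appearing in \textnormal{[M2]} is a smooth function of $X_t$, a standard cutoff combined with the uniform polynomial moment bound from the previous step extends these estimates to arbitrary $\psi\in D_\uparrow(E,\reels)$. Exponential rates beat the required $o(u^{-\epsilon})$ and $o(t^{-\gamma})$ bounds for any $\gamma<\frac{1}{2}$.

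The main obstacle is \textnormal{[B4]}. Since $\lambda^\alpha\geq\underline{\nu}>0$ the indicator in the definition of $\mathbb{Y}$ is identically one and the integrand $\log(v/u)u-(v-u)=u(\log(v/u)-v/u+1)$ is strictly concave in $v$ with unique zero at $v=u$, so $\mathbb{Y}(\theta)=0$ forces $\bar{\lambda}^\alpha(t,\theta)=\bar{\lambda}^\alpha(t,\theta^*)$ almost surely under the stationary extension $\bar{N}$ from Proposition \ref{stability}, for every $\alpha$ and $t$. Examining the jump of $\bar{\lambda}^\alpha$ at a jump time of $N^\beta$ gives $c_{\alpha\beta}=c_{\alpha\beta}^*$; between two consecutive jumps both sides are finite sums of real exponentials plus a constant, so they coincide as analytic functions on $\reels$. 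Letting $t\to+\infty$ identifies $\nu_\alpha=\nu_\alpha^*$, and linear independence of the family $\{e^{-a_{\alpha\beta}t},e^{-a_{\alpha\beta}^*t}\}_{\alpha,\beta}$ together with the almost-sure positivity of the coefficients after sufficiently many past jumps forces $a_{\alpha\beta}=a_{\alpha\beta}^*$. This establishes $\mathbb{Y}(\theta)<0$ for every $\theta\neq\theta^*$. The same argument applied to an infinitesimal linear relation $v^T\partial_\theta\bar{\lambda}^\alpha(\cdot,\theta^*)\equiv 0$ shows that the Fisher information $\Gamma$ is positive definite. Combining the local quadratic bound near $\theta^*$ coming from the Taylor expansion of $\mathbb{Y}$, the strict negativity of $\mathbb{Y}$ off $\theta^*$, continuity of $\mathbb{Y}$ and compactness of $\overline{\Theta}$, I obtain $\chi_0>0$, which is \textnormal{[B4]}.
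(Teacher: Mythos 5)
Your verification of \textnormal{[B1]}, \textnormal{[B2]} and \textnormal{[B4]} is essentially sound and, for \textnormal{[B4]}, follows the same identifiability argument as the paper (jump sizes give $C=C^*$, linear independence of the exponentials gives $A=A^*$ and $\nu=\nu^*$, and the analogous infinitesimal argument gives $\Gamma>0$); you even make explicit the compactness/Taylor step that turns these two facts into $\chi_0>0$, which the paper leaves implicit. The genuine problem is your treatment of \textnormal{[M2]}. First, the triplet $(\lambda^\alpha(t,\theta^*),\lambda^\alpha(t,\theta),\partial_\theta\lambda^\alpha(t,\theta))$ is \emph{not} a function of your state $X_t=(\cale(t;\theta^*),\cale(t;\theta))$: the derivatives with respect to $a_{\alpha\beta}$ involve the weighted integrals $\int_0^{t-}(t-s)e^{-a_{\alpha\beta}(t-s)}dN_s^\beta$, so the state would have to be augmented further before the claim ``the triplet is a smooth function of $X_t$'' can be made. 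Second, and more seriously, the assertion that ``the proof of Proposition \ref{propVgeo} carries over to $X$'' conceals the hard part of that proposition: $V$-geometric ergodicity requires not only the drift condition (which does scale up, with a Lyapunov exponent chosen uniformly over $\Theta$) but also that compact sets are petite, which for the original chain $\cale$ is the content of the entire Lemma \ref{Tchain} — a $\psi$-irreducibility/T-chain argument resting on an explicit non-degenerate Jacobian built from $d^2$ jump times. For your enlarged chain all components are functionals of the \emph{same} jump measure with kernels $e^{-a^*_{\alpha\beta}u}$, $e^{-a_{\alpha\beta}u}$, $ue^{-a_{\alpha\beta}u}$, so the corresponding (confluent Vandermonde) non-degeneracy must be re-established, degenerates whenever $\theta$ shares decay rates with $\theta^*$, and is not addressed at all in your sketch. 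This is a missing step, not a routine one.

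The paper avoids this entirely by \emph{not} enlarging the Markov chain: it keeps $\cale$ (true-parameter excitations only) as the Markov state, truncates the $\theta$-dependent intensities at an intermediate time $t+s$ so that the truncated quantities become measurable with respect to $\sigma\{\cale_v:\ t+s\le v\le t+u\}$, applies the strong Markov property twice together with the $\sqrt{V}$-geometric ergodicity of $\cale$ to bound the covariance by $O(r^{u/2})$, and handles the stability clause of \textnormal{[M2]} separately through the coupling with the stationary Hawkes process and the exponential $\mathbb{L}^1$ bound of Proposition \ref{stability}(iii). If you want to keep your Markovianization route you must (i) add the $(t-s)$-weighted excitations to the state and check the pair still forms a Markov process, and (ii) actually prove the T-chain/petite-set property for the augmented chain uniformly in $\theta$; otherwise the conditioning-plus-truncation argument is the safer path.
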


[B1] is immediate given the explicit expression of the intensity function. Unfortunately, the ergodicity condition [B3] is not a direct consequence of the $V$-geometric ergodicity of the process $\cale$. This is due to the fact that the vector $(\lambda(t,\theta^{*}),\lambda(t,\theta),\partial_\theta \lambda(t,\theta))$ cannot be expressed as a pure function of the marginal $\cale(t)$. On the other hand, since the stochastic structure of the process $(\lambda(.,\theta^{*}),\lambda(.,\theta),\partial_\theta \lambda(.,\theta))$ is globally driven by $\cale$, the exponential rates in (\ref{eqVgeoEpsilon}) and in Proposition \ref{stability} (iii) are strong enough to ensure the mixing assumption [M2]. The detailed proofs can be found in the appendix.







\section{Conclusion}

We have introduced in this paper a general ergodic framework for point process regression models, in which we were able to derive general asymptotic properties of both the Quasi Maximum Likelihood estimator and the Quasi Bayesian estimator. Because such formulation of ergodicity covers a wide range of models, we have shown that the Quasi Likelihood Analysis approach is robust in the sense that it applies to various contexts, provided that they encompass a family of laws of large numbers on the parametric stochastic intensity. \\
\medskip

Since, in practice, for Limit Order Books, parameters (and other variables of interest) are often estimated via such time average limits, we have decided to focus on various point process models that are directly involved in the modelling of the dynamics of LOB's. In particular, we have seen that Markovian models, self-exciting exponential Hawkes processes, and their mixtures that can be found in the literature are examples of applications of the QLA. \\
\medskip

 It is also important to know the limits of such procedure. Parametric estimations are undoubtebly powerfull when it comes to calibrating on real data a model that looks well suited to, say, the reproduction of stylized facts observed in practice. Nevertheless, in many cases the choice of a parametric model is quite arbitrary although it should be investigated with the help of mathematical tools. Information criteria (AIC, BIC), sparsity (e.g. LASSO type estimations), and hypothesis testing, are examples of model selection methods that can quantify in some sense to what extent models perform well (this last statement being deliberately vague here), even in mispecified contexts. Another approach consists in checking the validity of a model by testing the constancy of the parameters over time. In the same spirit as in \cite{potiron2015estimating}, but in a different asymptotic though, empirical tests could be conducted to investigate the stability of parameters. Should they fail, the choice of an ergodic model itself would have to be reconsidered. Although such considerations are beyond the scope of the present paper, they should be investigated in the future.\\
 \medskip

 Let us conclude this work by pointing out that in the litterature there is virtually no LOB model that allows random volumes of orders, except slight generalizations, either to the simple I.I.D case (see e.g. \cite{MuniTokeQueuing2013}), or in a theoretic Markovian framework \cite{huang2015ergodicity}. The lack of such model extensions is probably due to the fact that the very rich and complex structure of Order Books is still poorly understood. The QLA could be generalized to more intricate models for Limit Order Books in order to address the most general structure between random time events $T_i$ and random order sizes $V_i$. Accordingly, statistical inferences for marked point processes should thus be the subject of a further work as well. Along with the abovementioned model selection machinery, applications to real data analysis would certainly shed light on these little-known mechanisms. 

\section*{Acknowledgement}
This work was in part supported by CREST Japan Science and Technology Agency, Japan Society for the Promotion of Science : Grants-in-Aid for Scientific Research No. 24340015 (Scientific Research), No. 26540011 (Challenging Exploratory Research); NS Solutions Corporation; and by a Cooperative Research Program of the Institute of Statistical Mathematics. 

\section{Appendix}

\subsection{Proofs of Section 3}

\begin{proof}[Proof of Proposition \ref{extensionA3}]
Let us start with the identification of the mapping $\pi_\alpha(.,\theta)$ on $C_b(E,\reels)$. For the sake of simplicity we write $X_s^\alpha(\theta) = (\lambda^\alpha(s,\theta^{*}),\lambda^\alpha(s,\theta),\partial_\theta \lambda^\alpha(s,\theta))$, and we consider the sequence of probability measures on $E$ defined for any Borel subset $A \subset E$ :

				\beas
				\pi_{T,\alpha}^\theta(A) = \inv{T}\int_0^T{\proba[X_s^\alpha(\theta) \in A]ds}. 
				\eeas

				For any $\psi \in C_b(E,\reels)$, an immediate application of the dominated convergence theorem and [A3] shows that $\int_E{\psi(x)\pi_{T,\alpha}^\theta(dx)} \to \pi_\alpha(\psi,\theta)$. On the other hand, it is straightforward to see that the family $(\pi_{T,\alpha}^\theta)_{T \in \reels_+}$ is tight since the family $(X_t^\alpha(\theta))_{t \in \reels_+}$ is tight itself. Thus along a subsequence  $(\pi_{T,\alpha}^\theta)_{T \in \reels_+}$ converges weakly to some $\pi_\alpha^\theta$, and for any $\psi \in C_b(E,\reels)$, $\pi_\alpha(\psi,\theta) = \int_{E}{\psi(x)\pi_\alpha^\theta(dx)}$ which determines uniquely $\pi_\alpha^\theta$.\\

 Let us now show (i). Let $\psi \in C_\uparrow(E,\reels)$. Because we can separate $\psi$ as usual $\psi = \psi_+ - \psi_-$, where $\psi_+$ and $\psi_-$ are respectively the positive and negative parts of $\psi$, it is not restrictive to assume that $\psi$ is non-negative. Consider first the case where $\psi$ is continuous on the whole set $E$. Put then $\delta > 0$, and consider for $T \in \reels_+$

\beas
V_T^{\alpha}(\theta) = \frac{1}{T}\int_0^T{\psi(X_s^\alpha(\theta)) ds}
\eeas
and
\beas
V_T^{\delta,\alpha}(\theta) = \frac{1}{T}\int_0^T{\left(\psi(X_s^\alpha(\theta)) \wedge \delta\right) ds},
\eeas
so 

\beas
\esp \l|V_T^{\alpha}(\theta)-V_T^{\delta,\alpha}(\theta)\r| &\leq& \frac{1}{T}\esp \l[ \int_0^T{\l|\left(\psi(X_s^\alpha(\theta))-\delta\r)1_{\{\psi(X_s^\alpha(\theta))\geq \delta\}}\r|ds}\r]\\
&\leq& \frac{1}{T}\esp \l[\int_0^T{\psi(X_s^\alpha(\theta)) 1_{\{\psi(X_s^\alpha(\theta))\geq \delta\}}ds}\r]\\
&\leq& \frac{1}{T\delta} \esp \int_0^T{\psi(X_s^\alpha(\theta))^2ds} \\
&\leq& \frac{Q}{\delta},
\eeas
				
where $Q = \sup_{t \in \reels_+}{\esp \sup_{\theta \in \Theta} \psi(X_t^\alpha(\theta))^2}$. Now taking $T$ and $T^{'}$ positive real numbers, we get :
				
				\beas
				\esp|V_T^\alpha(\theta) -V_{T^{'}}^\alpha(\theta)| &\leq& \esp|V_T^\alpha(\theta) -V_{T}^{\alpha,\delta}(\theta)|+\esp|V_T^{\alpha,\delta}(\theta) -V_{T^{'}}^{\alpha,\delta}(\theta)|+\esp|V_{T^{'}}^\alpha(\theta) -V_{T^{'}}^{\alpha,\delta}(\theta)|\\
				&\leq& 2 \frac{Q}{\delta} +\esp|V_T^{\alpha,\delta}(\theta) -V_{T^{'}}^{\alpha,\delta}(\theta)|.\\
				\eeas
				
				Note that the second term can be controlled since the ergodicity assumption [A3] and the boundedness of moments of $V_T^{\alpha,\delta}(\theta)$ also imply the convergence in $\mathbb{L}^p$ of the LLN for any $p \geq 1$. For any $\epsilon >0$, taking large $\delta$ and sufficiently large $T$ and $T^{'}$ we can thus conclude that $\esp|V_T^\alpha(\theta) -V_{T^{'}}^\alpha(\theta)| \leq \epsilon$, showing that  $V_T^\alpha(\theta)$ is a Cauchy sequence in $T$, and thus converges to a finite value $\pi_\alpha(\psi,\theta)$. In particular, thanks to the monotone convergence theorem applied to the family $\psi \wedge \delta$ and the measure $\pi_\alpha^\theta$ we have also shown that the identity  $\pi_\alpha(\psi,\theta) =  \int_{E}{\psi(x)\pi_\alpha^\theta(dx)}$ holds. This shows that $\pi_\alpha(.,\theta)$ can be extended to $C_\uparrow(E,\reels)$.\\

				In the case where $\psi$ has singularities at points of the form $(0,v,w)$ and $(u,0,w)$, first construct a sequence of function $\psi_\eta \in C_b(E,\reels)$ such that $\psi_\eta \uparrow \psi$ when $\eta \to 0$ and such that for any $(u,v,w) \in E$, $\psi_\eta(u,v,w)=0$ whenever $u\leq \eta $ or $v\leq \eta $, and $\psi_\eta(u,v,w) =\psi_\eta(u,v,w)$ whenever $u\geq 2\eta $ and $v\geq 2\eta $ . Such construction is possible because $\psi$ is lower semi-continuous. It is then straightforward to show that $V_T^\alpha(\theta)$ is again a Cauchy sequence in $\mathbb{L}^1$ as previously.\\  
				
				We now turn to the uniform convergence in $\theta$. Once again, assume first for the sake of simplicity that $\psi$ is continuous on $E$. As $\overline{\Theta}$ is compact, it is sufficient to show that the family $\l(\theta \to V_T^\alpha(\theta)\r)_{T \in \reels_+}$ is equicontinuous for, say, the $\mathbb{L}^1$ norm. Since $\sup_{t \in \reels_+} \esp[ \sup_{\theta \in \Theta} |X_t^\alpha(\theta)| ] < \infty$, the family $(X_t^\alpha(\theta))_{t \in \reels_+,\theta \in \Theta}$ is tight. Consider thus $\epsilon >0$ and a compact $K \in E$ such that $\proba[X_t^\alpha(\theta) \notin K] < \epsilon$ for all $t$ and $\theta$. Let $\tilde{\psi}$ be a $C^1$ function with compact support such that $\|\psi -\tilde{\psi}\|_{\infty, K} < \epsilon$ and $\tilde{\psi} \leq 2\psi$ outside $K$, where $\|.\|_{\infty, K}$ is the infinite norm over $K$. In particular $\tilde{\psi}$ is k-Lipschitzian for some $k > 0$.  Then 

				\beas
				\esp|\psi(X_t^\alpha(\theta)) - \psi(X_t^\alpha(\theta^{'}))| &\leq& \esp|(\psi-\tilde{\psi})(X_t^\alpha(\theta))| +\esp|(\psi-\tilde{\psi})(X_t^\alpha(\theta^{'}))| + \esp|\tilde{\psi}(X_t^\alpha(\theta)) - \tilde{\psi}(X_t^\alpha(\theta^{'}))|,
				\eeas
				but 

				\beas
				\esp|(\psi-\tilde{\psi})(X_t^\alpha(\theta))| &\leq& \epsilon + \esp|(\psi-\tilde{\psi})(X_t^\alpha(\theta)) 1_{\{ X_t^\alpha(\theta) \notin K \}}|\\
				&\leq& \epsilon + \esp \l[\l|(\psi-\tilde{\psi})(X_t^\alpha(\theta))\r|^2\r]^{\half}\proba[X_t^\alpha(\theta) \notin K]^\half\\
			    &\leq& \epsilon + M\epsilon^\half,
				\eeas

				where $M = 3 \sup_{t \in \reels_+}\|\sup_{\theta \in \Theta} \psi(X_t^\alpha(\theta))\|_2$. On the other hand,

				\beas
				\esp|\tilde{\psi}(X_t^\alpha(\theta)) - \tilde{\psi}(X_t^\alpha(\theta^{'}))| &\leq& k\esp|X_t^\alpha(\theta) - X_t^\alpha(\theta^{'})| \\
				&\leq& k|\theta - \theta^{'}|\sup_{t \in \reels_+}\esp \l|\sup_{\theta \in \Theta}  |\partial_\theta X_t^\alpha(\theta)|\r|.
				\eeas

				Thus we deduce that by taking $\theta$ sufficiently close to $\theta^{'}$, 
				\beas
				\esp|V_T^\alpha(\theta)-V_T^\alpha(\theta^{'})| \leq 3\epsilon +M\epsilon^\half, 
				\eeas
				the bound being independent of $T$. Such equicontinuity along with the pointwise convergence ensures the uniform convergence. The case where $\psi$ has singularities is treated similarly using the tightness of the family $\l(X_t^\alpha(\theta), \frac{1_{\{\lambda^\alpha(t,\theta) \neq 0\}}}{\lambda^\alpha(t,\theta)}\r)_{t \in \reels_+,\theta \in \Theta}$. 
\end{proof}

The following lemma is a verification of the permutation rule of the symbols $\partial_\theta$ and $\int_0^T$.

\begin{lemma*}
	The process $\int_0^T{\textnormal{log}(\lambda^\alpha(s,\theta))d\tilde{N}_s^\alpha}$ is well defined and admits a derivative for any $\theta \in \Theta$ and we have :
	$$\partial_\theta \int_0^T{\textnormal{log}(\lambda^\alpha(s,\theta))d\tilde{N}_s^\alpha} =\int_0^T{\partial_\theta \textnormal{log}( \lambda^\alpha(s,\theta))d\tilde{N}_s^\alpha}.$$
	\label{deriv}
\end{lemma*}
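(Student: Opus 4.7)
The plan is to first establish the stochastic integral as a well-defined $L^p$-martingale, and then interchange differentiation and stochastic integration via a dominated-convergence argument adapted to the compensated point process.

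First, I would verify that $F_T^\alpha(\theta) := \int_0^T \log \lambda^\alpha(s,\theta)\,d\tilde N_s^\alpha$ is well-defined. As in the handling of (\ref{eqll}), the integral is implicitly restricted to the set $\{\lambda^\alpha(s,\theta^*) \neq 0\}$, which by [A2](iii) coincides with $\{\lambda^\alpha(s,\theta) \neq 0\}$; on this set $\log \lambda^\alpha(s,\theta)$ is finite and left-continuous in $s$ by [A1](i), hence predictable. The elementary bound $|\log x| \leq x + x^{-1}$ for $x > 0$, combined with [A2](i)-(ii), yields a uniform $L^p$ bound on the integrand for any $p > 1$, and Davis-Burkholder-Gundy then gives $\|F_T^\alpha(\theta)\|_p < \infty$.

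Next, fix a coordinate direction $e_i$ and a small scalar $h \neq 0$. Using the $C^3$ regularity of $\theta \mapsto \lambda^\alpha(s,\theta)$ from [A1](ii) pointwise in $s$, the fundamental theorem of calculus rewrites the difference quotient as a single stochastic integral
\begin{eqnarray*}
\frac{F_T^\alpha(\theta + h e_i) - F_T^\alpha(\theta)}{h} - \int_0^T \partial_{\theta_i} \log \lambda^\alpha(s,\theta)\,d\tilde N_s^\alpha = \int_0^T G_s(h)\,d\tilde N_s^\alpha,
\end{eqnarray*}
where $G_s(h) = \int_0^1 [\partial_{\theta_i} \log \lambda^\alpha(s,\theta + u h e_i) - \partial_{\theta_i} \log \lambda^\alpha(s,\theta)]\,du$. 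The goal is to show the right-hand side tends to $0$ in $L^p$ as $h \to 0$. Continuity of $\partial_{\theta_i} \log \lambda^\alpha(s,\cdot)$ on $\Theta$ gives $G_s(h) \to 0$ pointwise in $(\omega,s)$, and [A2] supplies a uniform $L^p$-integrable envelope for $\sup_{\theta' \in \Theta} |\partial_{\theta_i} \log \lambda^\alpha(s,\theta')|$. Applying BDG to the residual and then dominated convergence on the predictable bracket $\int_0^T G_s(h)^2 \lambda^\alpha(s,\theta^*)\,ds$ delivers the required $L^p$ convergence and hence the derivative identity.

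The main obstacle will be controlling the logarithmic derivative $\partial_{\theta_i} \log \lambda^\alpha = \partial_{\theta_i} \lambda^\alpha / \lambda^\alpha$, which is singular where the intensity vanishes. This is precisely where [A2](ii)-(iii) intervene: the indicator $1_{\{\lambda^\alpha(s,\theta^*) \neq 0\}}$ built into the compensated integral (both the jump contribution and the compensator $\lambda^\alpha(s,\theta^*)\,ds$ vanish on $\{\lambda^\alpha(s,\theta^*)=0\}$) neutralizes the singularity, while the uniform moment bound $\sup_t \|\sup_\theta |\lambda^\alpha(t,\theta)^{-1}| 1_{\{\lambda^\alpha(t,\theta) \neq 0\}}\|_p < \infty$, combined with [A2](i) and Hölder's inequality, provides the integrable dominating functions that legitimize both the well-definedness step and the limit exchange.
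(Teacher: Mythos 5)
Your route is genuinely different from the paper's, and it proves a weaker statement than the one the lemma asserts and than the paper later needs. You establish that the difference quotient converges in $L^p(\Omega)$ to $\int_0^T \partial_\theta \log(\lambda^\alpha(s,\theta))\,d\tilde N_s^\alpha$, i.e.\ differentiability in the mean-$p$ sense at each fixed $\theta$. The lemma, however, is invoked to permute $\partial_\theta$ and $\int_0^T$ \emph{pathwise}: downstream, Sobolev's inequality is applied $\omega$-by-$\omega$ to the random field $\theta\mapsto M_T^\alpha(\theta)$, which requires that for almost every $\omega$ the map $\theta\mapsto \int_0^T \log(\lambda^\alpha(s,\theta))\,d\tilde N_s^\alpha$ be (weakly) differentiable on $\Theta$ with the claimed derivative. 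An $L^p$-derivative at each $\theta$ does not deliver this without an additional Fubini-type upgrade, which you do not supply. The paper avoids the issue entirely by the elementary observation that the compensated integral splits pathwise into $\int_0^T \log(\lambda^\alpha)\,dN_s^\alpha$ (an a.s.\ finite sum of $C^3$ functions of $\theta$, differentiable term by term) minus $\int_0^T \log(\lambda^\alpha(s,\theta))\lambda^\alpha(s,\theta^*)\,ds$ (an ordinary Lebesgue integral, handled by classical dominated convergence for a fixed $\omega$, with the domination coming from $|\log x|\le |x^{-1}|+|x-1|$ and [A2]). That decomposition is the missing idea; once you have it, no martingale machinery is needed at all.

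A secondary technical point: for $p>2$ the Burkholder--Davis--Gundy inequality controls $\mathbb{E}\,\sup|M|^p$ by the quadratic variation $[M]_T=\int_0^T G_s(h)^2\,dN_s^\alpha$, not by the predictable bracket $\int_0^T G_s(h)^2\lambda^\alpha(s,\theta^*)\,ds$ that you propose to dominate; passing from the former to the latter requires the recursive estimate of Lemma \ref{lemmaMajoration}. Your argument is clean as stated only for $p=2$ (where the two coincide in expectation), which would still suffice for convergence in probability but again falls short of the pathwise conclusion. Your treatment of the vanishing-intensity set via [A2](ii)--(iii) and the implicit indicator is correct and matches the paper's conventions.
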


\begin{proof}[Proof of Lemma \ref{deriv}]
		
		For the $\int_0^T{...dN_s}$ part the result is immediate since for almost any $\omega$ the integral is simply a finite sum of terms that are differentiable in $\theta$. For the $\int_0^T{...ds}$ part, we first need to show that $\int_0^T{\text{log}(\lambda^\alpha(s,\theta))\lambda^\alpha(s,\theta^{*})ds}$ is well defined a.s., but we have :
		$$ \esp \int_0^T{\left|\text{log}(\lambda^\alpha(s,\theta))\lambda^\alpha(s,\theta^{*})\right|ds}  <+\infty, $$
		using the majoration $|\text{log}(x)| \leq \left|\frac{1}{x}\right|+ |x-1|$ and [A2]. With a similar argument we can easily show that 
		$$ \int_0^T{\left|\partial_\theta \text{log}(\lambda^\alpha(s,\theta))\lambda^\alpha(s,\theta^{*})\right|ds} \leq \int_0^T{\left|\sup_{\theta \in \Theta}\partial_\theta \text{log}(\lambda^\alpha(s,\theta))\right|\lambda^\alpha(s,\theta^{*})ds} < +\infty \text{  } \proba \text{-a.s.}.$$

		Now, for a given $\omega$ such that these random variables are finite, we can conclude by applying the dominated convergence theorem to the function $s \to \frac{\lambda^\alpha(s,\theta)[\omega]-\lambda^\alpha(s,\theta_0)[\omega]}{\theta-\theta_0}$ when $\theta \to \theta_0$, with respect to the measure $\lambda^\alpha(s,\theta^{*})[\omega]ds$.   
		
\end{proof}

The following lemma is useful to prove the $\mathbb{L}^p$ boundedness of stochastic integrals with respect to the processes $N^\alpha$ or $\tilde{N}^\alpha$.
\begin{lemma*}
	let $f_t$ be a predictable process and consider the martingale $\tilde{N}_t = N_t - \Lambda_t(\theta^{*})$.  Then for any $p \geq 1$, $\alpha \in \mathbf{I}$, we have the following majoration for some constant $C_p$  :
	$$ \esp \left[ \left|\int_0^T{f_s  d\tilde{N}_s^\alpha }\right|^{2^p}\right] \leq C_p\esp \left[ \int_0^T{f_s^{2^p}  \lambda^\alpha(s,\theta^{*})ds} \right] + C_p\esp \left[ \left|\int_0^T{f_s^2  \lambda^\alpha(s,\theta^{*})ds} \right|^{2^{p-1}}\right].$$
	whenever the expectations are well defined.
	\label{lemmaMajoration}
\end{lemma*}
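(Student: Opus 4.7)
The plan is to proceed by induction on $p \geq 1$, building the required inequality from the Burkholder--Davis--Gundy (BDG) inequality applied at exponent $2^p$ and, at each step, reducing the problem to the same type of bound at exponent $2^{p-1}$. The base case $p = 1$ follows at once from the It\^o isometry: since $\tilde N^\alpha$ has continuous compensator $\int_0^\cdot \lambda^\alpha(s,\theta^*)\,ds$, the square-integrable martingale $M_t = \int_0^t f_s\,d\tilde N_s^\alpha$ satisfies $\esp[M_T^2] = \esp[\int_0^T f_s^2 \lambda^\alpha(s,\theta^*)\,ds]$, which is trivially dominated by either summand on the right-hand side of the claim.

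For the inductive step, I would apply the BDG inequality to $M$ at exponent $2^p$, obtaining $\esp[|M_T|^{2^p}] \leq K_p\,\esp[[M,M]_T^{2^{p-1}}]$. Because $N^\alpha$ is a point process (unit jumps) with continuous compensator, the quadratic variation reduces to $[M,M]_T = \int_0^T f_s^2\,dN_s^\alpha = \int_0^T f_s^2\,d\tilde N_s^\alpha + \int_0^T f_s^2 \lambda^\alpha(s,\theta^*)\,ds$. The elementary inequality $(a+b)^q \leq 2^{q-1}(a^q+b^q)$ with $q = 2^{p-1}$ then decouples the two contributions. The absolutely continuous piece is already the second term in the target bound, while for the martingale piece I apply the induction hypothesis with predictable integrand $g_s = f_s^2$, which produces the good term $\esp[\int_0^T f_s^{2^p} \lambda^\alpha(s,\theta^*)\,ds]$ together with a residual $R_p := \esp[(\int_0^T f_s^4 \lambda^\alpha(s,\theta^*)\,ds)^{2^{p-2}}]$.

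The remaining work is to absorb $R_p$ into a constant multiple of the two target terms. For $p = 2$ there is nothing to do since $2^{p-2} = 1$ and the residual already matches $\esp[\int_0^T f_s^{2^p}\lambda^\alpha\,ds]$. For $p \geq 3$, set $A = \int_0^T f_s^2 \lambda^\alpha\,ds$ and $B = \int_0^T f_s^{2^p} \lambda^\alpha\,ds$. I would interpolate via H\"older applied with respect to the positive measure $\lambda^\alpha(s,\theta^*)\,ds$ to obtain $\int_0^T f_s^4 \lambda^\alpha\,ds \leq A^{\alpha} B^{\beta}$ with the choice $\alpha = 1 - 2/(2^p-2)$ and $\beta = 2/(2^p-2)$, dictated by the condition $2\alpha + 2^p \beta = 4$ and $\alpha + \beta = 1$. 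Raising to the power $2^{p-2}$ and applying Young's inequality with the conjugate exponents $u = 2/\alpha$ and $v = 1/(\beta\,2^{p-2})$ (a short computation confirms $1/u + 1/v = 1$) yields $(\int_0^T f_s^4 \lambda^\alpha\,ds)^{2^{p-2}} \leq (\alpha/2)\, A^{2^{p-1}} + \beta\,2^{p-2} B$, i.e.\ exactly the desired combination of the two target terms.

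The overall architecture is routine once the recursion is set up; the main delicacy is the bookkeeping of the H\"older/Young exponents in the residual step, which must be tuned precisely so that $A$ appears to the power $2^{p-1}$ (matching the claim) while $B$ appears to the first power (matching the absolutely continuous target). Apart from this, all invocations (BDG for purely discontinuous martingales, dominated split of the quadratic variation, the induction hypothesis at rank $p-1$ applied with integrand $f^2$) are standard, and the proof closes in a finite number of steps with a constant $C_p$ depending only on $p$ through the BDG constant and the interpolation coefficients.
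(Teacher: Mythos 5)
Your proof is correct, and its skeleton is the same as the paper's: the base case is the isometry $\esp[M_T^2]=\esp[\int_0^T f_s^2\lambda^\alpha(s,\theta^*)\,ds]$, and the inductive step applies Burkholder--Davis--Gundy at exponent $2^p$, writes $[M,M]_T=\int_0^T f_s^2\,dN_s^\alpha=\int_0^T f_s^2\,d\tilde N_s^\alpha+\int_0^T f_s^2\lambda^\alpha(s,\theta^*)\,ds$, and recurses on the martingale part with integrand $f^2$. The only place where you genuinely diverge is in how the intermediate moments are absorbed into the two target terms. The paper unrolls the recursion completely to obtain $Q_p\sum_{q=1}^{p}\esp\bigl|\int_0^T f_t^{2^q}\lambda^\alpha(t,\theta^*)\,dt\bigr|^{2^{p-q}}$ and then dominates each summand by $\max\bigl\{(\int_0^T f_t^2\lambda^\alpha\,dt)^{2^{p-1}},\int_0^T f_t^{2^p}\lambda^\alpha\,dt\bigr\}$, via the normalization $g_t=f_t/(\int_0^T f_t^2\lambda^\alpha\,dt)^{1/2}$, Jensen's inequality for the probability measure $g_t^2\lambda^\alpha\,dt$, and a case split on whether $\int g_t^{2^p}\lambda^\alpha\,dt\geq 1$. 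You instead keep the induction hypothesis in closed form, so only the single residual $\esp[(\int_0^T f_s^4\lambda^\alpha\,ds)^{2^{p-2}}]$ survives, and you absorb it by H\"older interpolation between $\int f^2\lambda^\alpha\,ds$ and $\int f^{2^p}\lambda^\alpha\,ds$ followed by Young's inequality; your exponent bookkeeping checks out (in particular $\alpha/2+\beta\,2^{p-2}=1$ with your choices, and both Young exponents exceed $1$ for $p\geq 3$). The two absorption arguments express the same interpolation fact --- yours trades the paper's crude but simple $\max$ bound for a smoother additive splitting --- and both yield a constant depending only on $p$.
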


\begin{proof}[Proof of Lemma \ref{lemmaMajoration}]
	For $p=1$ it is sufficient to notice that we have :
	$$\esp \left[ \left|\int_0^T{f_s  d\tilde{N}_s^\alpha }\right|^{2}\right]=\esp \left[ \int_0^T{f_s^{2}  \lambda^\alpha(s,\theta^{*})ds} \right]$$
	For $p >1 $, notice that by the Burkholder-Davis-Gundy inequality, one can write :
	
	\begin{eqnarray*}
	\esp \left[ \left|\int_0^T{f_s  d\tilde{N}_s^\alpha }\right|^{2^p}\right]&\leq& D_p \esp\left[\left|\int_0^T{f_s^2  dN_s^\alpha }\right|^{2^{p-1}}\right]\\
	&\leq& 2^{p-1}D_p \esp\left[\left|\int_0^T{f_s^2  d\tilde{N}_s^\alpha }\right|^{2^{p-1}}\right]+2^{p-1}D_p \esp\left[\left|\int_0^T{f_s^2  \lambda^\alpha(s,\theta^{*})ds }\right|^{2^{p-1}}\right]\\
	\end{eqnarray*}
	
	And by induction, one gets for some constant $Q_p$ :
	
	$$\esp \left[ \left|\int_0^T{f_s  d\tilde{N}_s^\alpha }\right|^{2^p}\right] \leq Q_p \sum_{q=1}^{p}{\esp \left| \int_0^T{f_t^{2^q}\lambda^\alpha(t,\theta^{*}) dt}\right|^{2^{p-q}}}. $$
	
	Now we show that for any $q \in \{1,...,p \}$, 
	$$\left(\int_0^T{f_t^{2^q}\lambda^\alpha(t,\theta^{*}) dt}\right)^{2^{p-q}} \leq \text{max}\left\{\left(\int_0^T{f_t^2 \lambda^\alpha(t,\theta^{*})dt}\right)^{2^{p-1}},\int_0^T{f_t^{2^p}\lambda^\alpha(t,\theta^{*})dt}\right\}.$$
	
	If $\int_0^T{f_t^2 \lambda^\alpha(t,\theta^{*})dt} =0$, we trivially get $0\leq0$. On the other hand, assuming $\int_0^T{f_t^2 \lambda^\alpha(t,\theta^{*})dt} >0$, put 
	
	$$ g_t = \frac{f_t}{\left(\int_0^T{f_t^2 \lambda^\alpha(t,\theta^{*})dt}\right)^\frac{1}{2}}.$$
	
	Then $\mu(dt) = g_t^2 \lambda^\alpha(t,\theta^{*}) dt$ is a probability measure on $[0,T]$. We thus have, by Jensen's inequality :
	
	$$\left(\int_0^T{g_t^{2^q}\lambda^\alpha(t,\theta^{*}) dt}\right)^{2^{p-q}} =\left(\int_0^T{g_t^{2^q-2}\mu(dt)}\right)^{2^{p-q}} \leq \left(\int_0^T{g_t^{2^p}\lambda^\alpha(t,\theta^{*})dt}\right)^{\frac{2^{p-1}-2^{-q}}{2^{p-1}-1}}.$$
	
	Depending on whether $\int_0^T{g_t^{2^p}\lambda^\alpha(t,\theta^{*})dt} \geq 1$ or not, we can conclude 
	
	$$\left(\int_0^T{g_t^{2^q}\lambda^\alpha(t,\theta^{*}) dt}\right)^{2^{p-q}} \leq \text{max}\left\{1,\int_0^T{g_t^{2^p}\lambda^\alpha(t,\theta^{*})dt}\right\}.$$
	
	And from the expression of $g$ we finally get the overall result.
	
\end{proof}

\begin{proof}[Proof of Lemma \ref{lemmaQLA}]
 Without loss of generality we can assume that $p$ is of the form $p=2^q>n$. For (\ref{eqDelta}), it is sufficient to apply Lemma \ref{lemmaMajoration} to $f_s = \lambda^\alpha(s,\theta^{*})^{-1}\partial_\theta \lambda^\alpha(s,\theta^{*})1_{\{\lambda^\alpha(s,\theta^{*}) \neq 0\}}$, along with [B2]. Following the same reasoning as for the classical approach, to get (\ref{eqY}) it is sufficient to show that :

$$ \l\|T^\gamma \sup_{\theta \in \Theta} \frac{M_T^\alpha(\theta)}{T} \r\|_p \to 0, $$
and
$$  \l\|T^\gamma\sup_{\theta \in \Theta} |V_T(\theta)-\mathbb{Y}(\theta)| \r\|_p \to 0.$$ 
 
For the first part, once again it is sufficient to apply Sobolev's inequality and then Lemma \ref{lemmaMajoration} to get the convergence for any $p >n$. For the second part, we wish to use Sobolev's inequality once again, but we need first to show that $\mathbb{Y}$ is of class $C^1$ on $\Theta$. This is ensured by the fact that $\sup_{\theta \in \Theta} |\partial_\theta V_T(\theta)-\mathbb{U}(\theta)| \to^\proba 0$ for some $\mathbb{U}$ thanks to [B3]. Thus $\mathbb{Y}$ is $C^1$ and $\partial_\theta \mathbb{Y} = \mathbb{U}$. Now, using Sobolev's inequality, there exists some constant $A(\Theta, p)$ such that :

\beas
T^{\gamma p} \esp \l[ \sup_{\theta \in \Theta} |V_T(\theta) - \mathbb{Y}(\theta)|^p\r] &\leq& A(\Theta, p) T^{\gamma p}\l( \int_\Theta{\esp \l[ |V_T(\theta) - \mathbb{Y}(\theta)|^p\r]d\theta}+\int_\Theta{\esp \l[ |\partial_\theta V_T(\theta) - \partial_\theta \mathbb{Y}(\theta)|^p\r]d\theta}\r).\\
\eeas
for any $p >n$. Because of the ergodicity assumption we have :

$$T^{\gamma p} \sup_{\theta \in \Theta}\esp \l[  |V_T(\theta) - \mathbb{Y}(\theta)|^p\r] +T^{\gamma p} \sup_{\theta \in \Theta}\esp \l[  |\partial_\theta V_T(\theta) - \partial_\theta \mathbb{Y}(\theta)|^p\r]\to 0,$$
which finally shows (\ref{eqY}). (\ref{eqGamma}) is a simple application of [B3], and finally (\ref{eqPartial}) is a straightforward consequence of Sobolev's inequality and Burkholder-Davis-Gundy inequality.

\end{proof}

\begin{proof}[Proof of Lemma \ref{lemmaM1}]
We first show the implication [M1] $\implies$ [A3]. For $T \in \reels_+$, $(\psi, \theta) \in C_b(E,\reels) \times \Theta$, we write 

\beas
X_t^\alpha(\theta) =  (\lambda^\alpha(t,\theta^{*}),\lambda^\alpha(t,\theta),\partial_\theta \lambda^\alpha(t,\theta)), 
\eeas
and
$$V_T^\alpha(\theta) = \inv{T} \int_0^T{\psi(X_t^\alpha(\theta)) dt}.$$

Define $\pi_\alpha$ as above and then, evaluating the mean square error, we get 

\beas 
\esp[(V_T^\alpha(\theta)-\pi(\psi,\theta))^2] &=&\text{Var}[V_T^\alpha(\theta)] + \l(\esp[V_T^\alpha(\theta)-\pi_\alpha(\psi,\theta)]\r)^2\\
&\leq& \frac{1}{T^2}\iint_{[0,T]^2}{\text{Cov}[\psi(X_t^\alpha(\theta)),\psi(X_s^\alpha(\theta))]dsdt} +\frac{1}{T}\int_0^T{\l(\esp[\psi(X_t^\alpha(\theta))-\pi_\alpha(\psi,\theta)]\r)^2dt}.\\
\eeas 

Now, using the mixing property for the first term of the right hand side leads to 

\beas
\frac{1}{T^2}\iint_{[0,T]^2}{\text{Cov}[\psi(X_t^\alpha(\theta)),\psi(X_s^\alpha(\theta))]dsdt}&\leq&\frac{1}{T^2}\iint_{[0,T]^2}{\rho_{|t-s|}dsdt}\\
&\leq& \frac{1}{T^2}\iint_{[0,T]^2}{1_{\{|t-s| \geq \sqrt T\}}\rho_{|t-s|}dsdt}+\frac{\sup_{u\in \reels_+} \rho_u}{T^2}\iint_{[0,T]^2}{1_{\{|t-s| \leq \sqrt T\}}dsdt}.\\
\eeas

The first term tends to $0$ thanks to the mixing property, and the second term is of order $O\l(\inv{\sqrt T} \r)$. The covariance term thus tends to $0$ as $T \to +\infty$. The convergence of the bias term to $0$ is immediate given the convergence in distribution of the argument of $\psi$.
\medskip

It remains to show [M2] $\implies$ [B3]. It is sufficient to prove the $\mathbb{L}^2$ convergence since the convergence in $\mathbb{L}^p$ for any $p \geq 1$ will be a straightforward consequence of the boundedness of moments of the intensity. Write the rescaled bias-variance decomposition 

\beas 
\esp[T^{2\gamma}(V_T^\alpha(\theta)-\pi(\psi,\theta))^2] &=&T^{2\gamma}\text{Var}[V_T^\alpha(\theta)] + T^{2\gamma}\l(\esp[V_T^\alpha(\theta)-\pi_\alpha(\psi,\theta)]\r)^2\\
&\leq& T^{2\gamma -2}\iint_{[0,T]^2}{\text{Cov}[\psi(X_t^\alpha(\theta),\psi(X_s^\alpha(\theta)]dsdt} +T^{2\gamma-1}\int_0^T{\l(\esp[\psi(X_t^\alpha(\theta)-\pi_\alpha(\psi,\theta)]\r)^2dt}\\
&\leq& A_1(T,\theta) + A_2(T,\theta).
\eeas 

Thanks to [M2], we have uniformly in $\theta \in \Theta$ that $\esp[\psi(X_t^\alpha(\theta)-\pi_\alpha(\psi,\theta)] = o(t^{-\gamma})$, and thus the fact that $\sup_{\theta \in \Theta} A_2(T,\theta) \to 0$ is clear. For $A_1(T,\theta)$, take some $0< \delta <1$ to be defined later and write

\beas
A_1(T,\theta)&\leq&T^{2\gamma -2}\iint_{[0,T]^2}{\rho_{|t-s|}dsdt}\\
&\leq& T^{2\gamma -2}\iint_{[0,T]^2}{1_{\{|t-s| \geq T^\delta\}}\rho_{|t-s|}dsdt}+T^{2\gamma -2}\sup_{u\in \reels_+} \rho_u \iint_{[0,T]^2}{1_{\{|t-s| \leq T^\delta\}}dsdt}.\\
\eeas

A short computation leads to the two following dominations (uniformly in $\theta \in \Theta$) :
\beas 
T^{2\gamma -2}\iint_{[0,T]^2}{1_{\{|t-s| \geq T^\delta\}}\rho_{|t-s|}dsdt} &=& o\l(T^{2\gamma -\epsilon\delta}\r),\\
T^{2\gamma -2}\sup_{u\in \reels_+} \rho_u \iint_{[0,T]^2}{1_{\{|t-s| \leq T^\delta\}}dsdt} &=& O\l(T^{2\gamma + \delta -1} \r),
\eeas 
and consequently both terms can be controlled if there exists $\delta$ such that $2\gamma - \epsilon \delta \leq 0$ and $2\gamma +\delta -1 < 0$, that is, if $\frac{2\gamma}{\epsilon} < 1-2\gamma$. But this is exactly the above-stated condition on $\epsilon$.   
\end{proof}
\subsection{Proofs of Section 4}

\begin{proof}[Proof of Proposition \ref{stability}]
Only the third point remains to be proved. We first notice that, for $\alpha \in \mathbf{I}$,

 \beas
 \esp |\lambda^\alpha(t) - \bar{\lambda}^\alpha(t)| &=& \esp  \l|\sum_{\beta \in \mathbf{I}}\int_{0}^{t-}{h_{\alpha\beta}(t-s)dN_s^\beta}-\sum_{\beta \in \mathbf{I}}\int_{-\infty}^{t-}{h_{\alpha\beta}(t-s)d\bar{N}_s^\beta}\r|\\
 &\leq& \esp \l[ \sum_{\beta \in \mathbf{I}}\int_{0}^{t-}{h_{\alpha\beta}(t-s)d\l|N^\beta-\bar{N}^\beta\r|_s}\r]+\esp\l[\sum_{\beta \in \mathbf{I}}\int_{-\infty}^{0}{h_{\alpha\beta}(t-s)\bar{\lambda}^\beta(s)ds}\r]\\
 &\leq& \esp \l[ \sum_{\beta \in \mathbf{I}}\int_{0}^{t}{h_{\alpha\beta}(t-s)\l| \lambda^\beta(s) - \bar{\lambda}^\beta(s)\r|ds}\r]+\esp\l[\sum_{\beta \in \mathbf{I}}\int_{-\infty}^{0}{h_{\alpha\beta}(t-s)\bar{\lambda}^\beta(s)ds}\r],\\
 \eeas
 where $\l|N^\beta-\bar{N}^\beta\r|$ is the counting measure such that $\Delta \l|N^\beta-\bar{N}^\beta\r|_s =\l|\Delta N_s^\beta-\Delta\bar{N}_s^\beta\r|$. In \cite{BremaudStability1996}, the authors show that the expression of its compensator is $\l| \lambda^\beta(s) - \bar{\lambda}^\beta(s)\r|$. Let 
 \beas
 f_t^\alpha &=& \l\| \lambda^\alpha(t) - \bar{\lambda}^\alpha(t)\r\|_1
 \eeas
 and 

 \beas
 r_t^\alpha &=&\esp\l[\sum_{\beta \in \mathbf{I}}\int_{-\infty}^{0}{h_{\alpha\beta}(t-s)\bar{\lambda}^\beta(s)ds}\r].
 \eeas

Finally we define the convolution product between two functions as 
\beas
\phi  \ast \psi (t) = \int_0^t{\phi(t-s)\psi(s)ds} 
\eeas
and the convolution product between two multidimensional applications $U : \reels_+ \to \reels^{m,n}$ and $V : \reels_+ \to \reels^{n,q}$ as $U \ast V : \reels_+ \to \reels^{m,q} $ such that 
\beas
(U \ast V)_{i,j}(t) &=&\sum_{k =1}^n {U_{i,k} \ast V_{k,j}(t)}.
\eeas

The above equation can thus be rewritten in the following vector form :

\beas
f_t &\leq& r_t + h \ast f(t),  
\eeas
and by an immediate induction one gets for any $n \in \naturels$ that 

\beas
f_t &\leq& \sum_{k=0}^n {h^{\ast k} \ast r (t)}  + h^{\ast (n+1)} \ast f(t).  
\eeas

Now, for a multidimensional application $U : \reels_+ \to \reels^{m,n}$, write the norm 

\beas
\|U\|_{1,\text{Leb}} &=& \sum_{i=1}^{m}\sum_{j=1}^{n} \int_0^{+\infty}{|U_{i,j}(t)|dt}. 
\eeas

Since all $f^\alpha$'s are bounded, say, by some $K_f \in \reels_+$, it is immediate to see that 

\beas
|h^{\ast n} \ast f(t)| &\leq& K_f\|h^{\ast n} \|_{1,\text{Leb}} \\
&\leq& K_f \l|\Phi^n \r| \to 0
\eeas
as $\rho(\Phi) <1$. Thus we can define the process 

\beas
R &=& \sum_{k=0}^{+\infty}{h^{\ast k} \ast r }
\eeas
and notice that, first, $f_t \leq R_t$, and that $R$ is a solution to the Markov renewal equation

\beas
R = r + h \ast R.
\eeas

This type of equations is deeply studied in \cite{AsmussenAppliedProbability2003}, chapter VII. Let us define $\Phi_q$ as

\beas
\Phi_q &=& \int_0^{+\infty}{e^{qt}h(t)dt}.
\eeas

It is shown in Theorem 4.6 of \cite{AsmussenAppliedProbability2003}, chapter VII, that if $\Phi_0 = \Phi$ is irreducible, and if there exists a real $q$ such that $\rho(\Phi_q)=1$, then for every $\alpha \in \mathbf{I}$ 
\beas
f_t^\alpha &\leq& R_\alpha(t) = O\l(e^{-qt}\r).
\eeas

$\Phi$ is obviously irreducible since all its coefficients are positive. Now, as for $q$ sufficiently small $\Phi_q =\l[\frac{c_{\alpha\beta}}{a_{\alpha\beta} - q}\r] $, the application $q \to \rho(\Phi_q)$ is continuous, $\rho(\Phi_0) <1$ and $\rho(\Phi_q) \to +\infty$ as $q \to +\infty$ and thus Theorem 4.6 from \cite{AsmussenAppliedProbability2003} applies.
 \end{proof}

\begin{proof}[Proof of Proposition \ref{propVgeo}]
The Markovian property of the elementary excitations is a well-known result, see \cite{OakesMarkovianHawkes1975}. The $V$-geometric ergodicity has also been proved for a linear $V$, see \cite{AbergelHawkes2015}. The case of an exponential $V$ follows a very similar procedure. As in \cite{AbergelHawkes2015}, we need to apply Theorem 6.1 from \cite{MeynTweedieprocessiii1993}.\\

Define $\call$ the infinitesimal generator of $\cale$. If for some $V$, $c>0$, and $f<\infty$, the following drift criterion holds

\beas
\label{Vdrift}
\call V(\epsilon) &\leq& -cV(\epsilon)+f \text{, } \epsilon \in \reels_+^{d \times d},
\eeas
and if every compact set is petite (see \cite{MeynTweedieMarkovChain2009}), then Theorem 6.1 of \cite{MeynTweedieprocessiii1993} applies and $\cale$ is $V$-geometrically ergodic. We first show the drift condition, and postpone the second property to Lemma \ref{Tchain} below. Let us take some $M \in \reels_+^{d \times d}$ to be fixed later. It is not hard to see that the infinitesimal generator of $\cale$ has the following representation  

\beas
\call f(\epsilon) &=& \sum_{\alpha \in \mathbf{I}}{\lambda^\alpha}(f(\epsilon+\tilde{C}_\alpha)-f(\epsilon)) - \sum_{\alpha,\beta \in \mathbf{I}}{a_{\alpha\beta}\partial_{\epsilon_{\alpha \beta}}f(\epsilon)\epsilon_{\alpha\beta}},   
\eeas
with the obvious notation $\lambda^\beta = \nu_\beta + \sum_{\gamma \in \mathbf{I}}{\epsilon_{\beta\gamma}}$, and where $\tilde{C}_\alpha$ is the matrix whose elements are all zeros except for the $\alpha$-th column wich is the $\alpha$-th column of $C$. Applied to the $V$ of the proposition, we get 

\beas
\frac{\call V(\epsilon)}{V(\epsilon)} &=& \sum_{\beta \in \mathbf{I}}{\lambda^\beta}(e^{\sum_{\alpha \in \mathbf{I}} m_{\alpha\beta}c_{\alpha\beta} }-1) - \sum_{\alpha,\beta \in \mathbf{I}}{a_{\alpha\beta}m_{\alpha\beta}\epsilon_{\alpha\beta}}.   
\eeas

Consider now some continuous function $\xi$ such that $\xi(x)=o(x)$ on a vicinity of $0$ and such that the following Taylor expansion holds :

$$ e^{\sum_{\alpha \in \mathbf{I}} m_{\alpha\beta}c_{\alpha\beta} } - 1 \leq \sum_{\alpha \in \mathbf{I}} m_{\alpha\beta}c_{\alpha\beta}  + \xi(|M|).$$

We thus get 

\beas
\frac{\call V(\epsilon)}{V(\epsilon)} &\leq& \sum_{\beta \in \mathbf{I}}{\nu^\beta}\sum_{\alpha \in \mathbf{I}}{\l(m_{\alpha\beta}c_{\alpha\beta}  + \xi(|M|)\r)} +\sum_{\beta, \gamma \in \mathbf{I}}{\epsilon_{\beta\gamma} \sum_{\alpha \in \mathbf{I}}{\l(m_{\alpha\beta}c_{\alpha\beta}  + \xi(|M|)\r)}}- \sum_{\alpha,\beta \in \mathbf{I}}{a_{\alpha\beta}m_{\alpha\beta}\epsilon_{\alpha\beta}}.   
\eeas

Consider now $\kappa \in (\reels_+)^d$ an eigenvector of $\Phi^T$, the transposed matrix of $\Phi$, associated to $\rho(\Phi)$, and put for any $\alpha,\beta \in \mathbf{I}$

\beas
m_{\alpha\beta} &=& \frac{\kappa_{\alpha}}{a_{\alpha\beta}}.
\eeas

Such eigenvector exists thanks to Perron-Frobenius theorem. It is easy to see that we have then 

\beas
\frac{\call V(\epsilon)}{V(\epsilon)} &\leq& (\rho(\Phi)-1) \sum_{\beta,\gamma \in \mathbf{I}}{(\kappa_\beta + d\xi(|\kappa|)) \epsilon_{\beta\gamma}} + \sum_{\beta \in \mathbf{I}}{\nu_\beta\l(\kappa_\beta + d \xi(|\kappa|)\r)}.
\eeas

Let us fix $c>0$. Consider the set 

\beas
K &=& \l\{\epsilon \in \reels_+^{d^2} \l| (\rho(\Phi)-1) \sum_{\beta,\gamma \in \mathbf{I}}{(\kappa_\beta + \xi(|\kappa|)) \epsilon_{\beta\gamma}} + \sum_{\beta \in \mathbf{I}}{\nu_\beta\l(\kappa_\beta + d \xi(|\kappa|)\r)} \geq -c \r.\r\}.
\eeas

Since $\rho(\Phi) - 1 < 0$, $K$ is a compact set. We have thus :

\beas 
\call V(\epsilon) \leq -cV(\epsilon) + \sup_{y \in K} \l|\call V(y)\r| + c \sup_{y \in K} V(y),
\eeas

which is the desired drift condition.
\end{proof}

We now show in the following lemma that $\cale$ is a $\psi$-irreducible T-chain. See \cite{MeynTweedieMarkovChain2009}, Chapters 4 and 6, for more information about this concept.

\begin{lemma*}

For $t \in \reels_+$, let $\mathbf{Q}^t$ be the transition kernel associated to $\cale(t)$, i.e. for any $\epsilon \in \reels_+^{d \times d}$ and $A \in \mathbf{B}(\reels_+^{d \times d})$,
$$\mathbf{Q}^t(\epsilon,A)=\proba[\cale(t) \in A | \cale(0) = \epsilon].$$ 

$\cale$ verifies the following properties :

\bd
\im [(i)] $\cale$ is a T-chain, i.e. there exists a non-trivial kernel $\mathbf{T}$ such that for any $A \in \mathbf{B}(\reels_+^{d \times d})$, $\mathbf{T}(.,A)$ is lower semi-continuous, and 
$$\mathbf{Q}(.,A) \geq \mathbf{T}(.,A).$$ 
\im [(ii)] $\cale$ is $\psi$-irreducible.
\ed

Every compact set is thus a petite set.
\label{Tchain}
\end{lemma*}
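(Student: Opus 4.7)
The strategy is to establish (i) the T-process property and (ii) $\psi$-irreducibility, then to invoke Theorem 6.2.5 of \cite{MeynTweedieMarkovChain2009}, which asserts that for a $\psi$-irreducible T-process every compact set is petite. The construction of the T-kernel will exploit the explicit Markovian structure of $\cale$: between jumps $\cale$ follows the deterministic flow $\phi_u(\epsilon)_{\alpha\beta} = \epsilon_{\alpha\beta}\,e^{-a_{\alpha\beta} u}$, and at a jump of $N^\beta$ at time $s$ the $\beta$-th column of $\cale$ is shifted by $(c_{\alpha\beta})_{\alpha \in \mathbf{I}}$. The total intensity is an explicit, continuous function of the current state.

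For (i), fix $t>0$ and, for each $\beta \in \mathbf{I}$, consider the event $E_\beta(s)$ that exactly one jump of $N$ occurs in $(0, t)$, on coordinate $\beta$, at time $s \in (0, t)$. Because of the exponential form of the kernels, the probability density of $E_\beta(s)$ with respect to $ds$ is an explicit function $p_\beta(\epsilon, s)$ that is jointly continuous in $(\epsilon, s)$ and strictly positive on $\reels_+^{d \times d} \times (0, t)$, while on $E_\beta(s)$ one has $\cale(t) = \Psi_\beta(\epsilon, s)$ for a smooth map $\Psi_\beta$ whose $\beta$-th column varies strictly monotonically in $s$. Defining
\[
\mathbf{T}(\epsilon, A) := \sum_{\beta \in \mathbf{I}} \int_0^t \mathbf{1}_A\bigl(\Psi_\beta(\epsilon, s)\bigr)\,p_\beta(\epsilon, s)\,ds,
\]
one obtains $\mathbf{Q}^t(\epsilon, \cdot) \geq \mathbf{T}(\epsilon, \cdot)$ and $\mathbf{T}(\epsilon, \reels_+^{d \times d}) > 0$ for every $\epsilon$. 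The lower semi-continuity of $\mathbf{T}(\cdot, A)$ for every Borel $A$ will follow from the change of variables $s \mapsto y_\beta := \Psi_\beta(\epsilon, s)_{\alpha_0 \beta}$ (with a fixed reference index $\alpha_0$), which recasts the image measure $\Psi_\beta(\epsilon, \cdot)_{\ast}(p_\beta(\epsilon, \cdot)\,ds)$ as a density $q_\beta(\epsilon, y)\,dy$ on a fixed one-dimensional interval, continuous in $\epsilon$, to which dominated convergence applies.

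For (ii), I observe that from any initial state $\epsilon$ the event of no jump in $(0, u)$ has probability $\exp\bigl(-\sum_\alpha \int_0^u \lambda^\alpha(\phi_v \epsilon)\,dv\bigr) > 0$ and drives $\cale$ to $\phi_u(\epsilon)$, which tends to $0$ as $u \to \infty$. Hence any open neighborhood $U$ of $0$ is charged with positive probability by $\mathbf{Q}^u(\epsilon, \cdot)$ for $u$ sufficiently large, and combined with the continuous component constructed in (i) this yields $\psi$-irreducibility with $\psi := \text{Leb}|_U$. The conclusion that every compact set is petite then follows from Theorem 6.2.5 of \cite{MeynTweedieMarkovChain2009}. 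The main obstacle will be the rigorous verification of lower semi-continuity of $\mathbf{T}(\cdot, A)$ for arbitrary Borel $A$, since the natural image measure lives on an $\epsilon$-dependent one-dimensional curve; the change of variables outlined above is precisely what is needed to reduce the question to an integration against a fixed reference measure with density continuous in the initial state.
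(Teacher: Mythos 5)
There is a genuine gap in your construction of the continuous component, and it is fatal as soon as $d\geq 2$. On the event of exactly one jump, occurring on coordinate $\beta$ at time $s$, the terminal state is $\cale(t)_{\alpha\gamma}=\epsilon_{\alpha\gamma}e^{-a_{\alpha\gamma}t}$ for $\gamma\neq\beta$ and $\cale(t)_{\alpha\beta}=\epsilon_{\alpha\beta}e^{-a_{\alpha\beta}t}+c_{\alpha\beta}e^{-a_{\alpha\beta}(t-s)}$; as $s$ ranges over $(0,t)$ the image $\Psi_\beta(\epsilon,\cdot)$ is a one-dimensional curve inside the $d^2$-dimensional state space, with the columns $\gamma\neq\beta$ frozen at a location depending on $\epsilon$. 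Hence $\mathbf{T}(\epsilon,\cdot)$ is singular and carried by an $\epsilon$-dependent curve. Take $A_0=\bigcup_{\beta\in\mathbf{I}}\Psi_\beta(\epsilon_0,(0,t))$ for a fixed $\epsilon_0$: then $\mathbf{T}(\epsilon_0,A_0)=\mathbf{T}(\epsilon_0,\reels_+^{d\times d})>0$, while for generic $\epsilon$ arbitrarily close to $\epsilon_0$ the curves $\Psi_\beta(\epsilon,\cdot)$ miss $A_0$ entirely (already the frozen columns differ), so $\mathbf{T}(\epsilon,A_0)=0$ and $\mathbf{T}(\cdot,A_0)$ is not lower semi-continuous. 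The change of variables $s\mapsto y_\beta$ you propose only reparametrizes the single free coordinate; it cannot turn a measure concentrated on a moving curve into one that is absolutely continuous with respect to a fixed, $\epsilon$-independent reference measure on $\reels_+^{d\times d}$, and that is what lower semi-continuity for arbitrary Borel $A$ requires (it then follows from Fatou's lemma applied to the density). Your construction does work when $d=1$, where one jump supplies the single degree of freedom needed.

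The paper repairs exactly this point by conditioning on $d^2$ jumps, the first $d$ on $N^1$, the next $d$ on $N^2$, and so on: the map $(t_1,\dots,t_{d^2})\mapsto\cale(t)$ then goes from $\reels^{d^2}$ to $\reels^{d^2}$, and its Jacobian factors into (essentially Vandermonde) determinants, which are nonzero when the inter-jump times are close to a common value $t_0$ and under the non-degeneracy conditions $a_{ik}\neq a_{jk}$ and $c_{ij}\neq 0$ (the degenerate case being handled separately by reducing the dimension of $\cale$). This produces a sub-kernel with a genuine density with respect to Lebesgue measure on $\reels_+^{d^2}$, continuous in $\epsilon$, from which lower semi-continuity for every Borel set follows. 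Your step (ii) also needs adjustment: as written, the candidate $\psi=\mathrm{Leb}|_U$ is charged neither by the no-jump dynamics (a Dirac mass at $\phi_u(\epsilon)$) nor by your one-jump component when $d\geq 2$. The clean route, which the paper takes, is to observe that $0$ is a reachable point and invoke Proposition 6.2.1 of Meyn and Tweedie for T-chains to get $\psi$-irreducibility, after which Theorem 6.2.5 yields that every compact set is petite, as you intended.
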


We denote by $(T_n)_{n \in \naturels}$ the stopping times associated to the jumps of $N$. In other words, $T_k$ represents the time of the $k$-th jump of $N$, and we arbitrarily define $T_0=0$. We also write $\Delta T_n = T_n -T_{n-1}$ and finally $K_n$ for the label in $\mathbf{I}$ for which the jump has occured, i.e. $K_n$ is a random variable that takes values in $\mathbf{I}$ and that is the only $\beta \in \mathbf{I}$ such that $\Delta N_{T_n}^\beta = 1$. The following result is well-known :

\begin{lemma*}

for an initial state $\cale(0)=\epsilon$, write $\mu^\alpha(t,\epsilon) = \nu_\alpha+ \sum_{\beta \in \mathbf{I}}{\epsilon_{\alpha\beta}e^{-a_{\alpha\beta}t}}$, and $\mu(t,\epsilon) = \sum_{\alpha \in \mathbf{I}}{\mu^\alpha(t,\epsilon)}$.

Then :\\
\bd
\im [(i)] $\Delta T_1$ has the distribution density (with respect to Lebesgue measure) $f^{\Delta T_1}(t|\cale(0)=\epsilon)=f(t,\epsilon) = \mu(t,\epsilon)e^{-\int_0^t{\mu(s,\epsilon)ds}}$. 
 
\im [(ii)] Moreover, conditionnally to $\Delta T_1$, $K_1$ has the distribution $\proba[K_1 = \beta | \Delta T_1, \cale(0)=\epsilon] = \frac{\mu^\beta(\Delta T_1,\epsilon)}{\mu(\Delta T_1,\epsilon)}$.
\ed
\label{lawJumps}
\end{lemma*}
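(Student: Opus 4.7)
The plan is to exploit the fact that, conditionally on $\cale(0)=\epsilon$ and before any jump has occurred, the intensity is fully deterministic, so that on $[0,T_1)$ the point process $N$ behaves like a multivariate inhomogeneous Poisson process with independent components of rates $\mu^\alpha(\cdot,\epsilon)$. Both assertions then follow from a standard competing–Poisson argument.

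First, I would note that the elementary excitation $\epsilon_{\alpha\beta}(t)$ satisfies $\epsilon_{\alpha\beta}(t)=\int_0^{t-}c_{\alpha\beta}e^{-a_{\alpha\beta}(t-s)}dN_s^\beta+e^{-a_{\alpha\beta}t}\epsilon_{\alpha\beta}$, which between jumps reduces to the deterministic ODE solution $\epsilon_{\alpha\beta}(t)=\epsilon_{\alpha\beta}e^{-a_{\alpha\beta}t}$. Therefore, on $\{t<T_1\}$,
\[
\lambda^\alpha(t)=\nu_\alpha+\sum_{\beta\in\mathbf{I}}\epsilon_{\alpha\beta}e^{-a_{\alpha\beta}t}=\mu^\alpha(t,\epsilon),
\]
which is a deterministic function of $t$. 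In particular $\lambda^\alpha(t)\mathbf 1_{\{t\le T_1\}}=\mu^\alpha(t,\epsilon)\mathbf 1_{\{t\le T_1\}}$ is predictable and deterministic.

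Next I would invoke the classical characterization that a multivariate point process whose $\F$-intensity is a deterministic function of time on $[0,T_1)$ coincides in law, on that interval, with a family of independent inhomogeneous Poisson processes with the corresponding rates (this follows e.g.\ from Watanabe's theorem applied to each component, using that $N^\alpha$ and $N^\gamma$ have no common jumps by assumption). Let $(\tau_\alpha)_{\alpha\in\mathbf{I}}$ denote the independent first–jump times of these auxiliary Poisson processes; then each $\tau_\alpha$ has survival function $\exp(-\int_0^t\mu^\alpha(s,\epsilon)ds)$, and by construction $T_1=\min_\alpha\tau_\alpha$ and $K_1=\arg\min_\alpha\tau_\alpha$ a.s.\ on the event that the minimum is attained uniquely (which has full probability since each $\tau_\alpha$ is absolutely continuous).

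From independence and the multiplicativity of survival functions I would then compute
\[
\proba[T_1>t\mid\cale(0)=\epsilon]=\prod_{\alpha\in\mathbf{I}}\exp\!\Big(-\int_0^t\mu^\alpha(s,\epsilon)ds\Big)=\exp\!\Big(-\int_0^t\mu(s,\epsilon)ds\Big),
\]
and differentiation gives the density $f(t,\epsilon)=\mu(t,\epsilon)\exp(-\int_0^t\mu(s,\epsilon)ds)$, establishing (i). For (ii) I would write the joint infinitesimal probability
\[
\proba[T_1\in dt,K_1=\beta\mid\cale(0)=\epsilon]=\mu^\beta(t,\epsilon)e^{-\int_0^t\mu^\beta(s,\epsilon)ds}\prod_{\alpha\neq\beta}e^{-\int_0^t\mu^\alpha(s,\epsilon)ds}dt,
\]
which simplifies to $\mu^\beta(t,\epsilon)\exp(-\int_0^t\mu(s,\epsilon)ds)dt$. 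Dividing by $f(t,\epsilon)dt$ yields the stated conditional law.

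The only delicate point, and the main thing I would be careful to justify, is the passage from ``the $\F$-intensity equals $\mu^\alpha(t,\epsilon)$ on $[0,T_1)$'' to ``$N$ on $[0,T_1]$ is distributed as a vector of independent inhomogeneous Poisson processes with those rates'': one needs to argue that the stopping at $T_1$ does not affect the distribution of the first jump, which is standard (one may either apply Brémaud's characterisation of Poisson processes via their compensator to each $N^\alpha$ separately and use the no–common–jumps property to get independence, or argue directly by solving the Kolmogorov equation for $\proba[N_t=0\mid\cale(0)=\epsilon]$ on $[0,T_1)$).
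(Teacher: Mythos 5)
Your argument is correct in substance, but note that the paper does not actually prove this lemma at all: it is stated with the single remark that ``the following result is well-known,'' so there is nothing to compare against line by line. What you supply is the standard competing--Poisson justification, and your computation of $\mu^\alpha(t,\epsilon)$ from $\epsilon_{\alpha\beta}(t)=\epsilon_{\alpha\beta}e^{-a_{\alpha\beta}t}$ before the first jump, the product of survival functions for (i), and the $\arg\min$ identification for (ii) are all right. The one point that deserves more care is exactly the one you flag at the end: the phrase ``$N$ on $[0,T_1]$ is distributed as independent inhomogeneous Poisson processes'' is imprecise, because $T_1$ is a random time determined by $N$ itself and Watanabe's characterization requires a deterministic compensator on a fixed (not process-dependent) interval. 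The clean way to phrase your coupling is via the thinning construction: realize each $N^\alpha$ by thinning an independent Poisson measure $\Pi^\alpha$ below the graph of $\lambda^\alpha$, let $\tau_\alpha$ be the first atom of $\Pi^\alpha$ below the graph of the deterministic function $\mu^\alpha(\cdot,\epsilon)$; since $\lambda^\alpha=\mu^\alpha(\cdot,\epsilon)$ on $[0,T_1]$ the two thinning regions agree there, whence $T_1=\min_\alpha\tau_\alpha$ and $K_1=\arg\min_\alpha\tau_\alpha$ with the $\tau_\alpha$ genuinely independent. Alternatively, the second repair you mention is the shortest fully rigorous route and avoids any coupling: since $\{s\le T_1\}=\{N_{s-}=0\}$ is predictable and $1_{\{T_1\le t,\,K_1=\beta\}}=\int_0^t 1_{\{s\le T_1\}}dN^\beta_s$, taking expectations gives $\proba[T_1\le t,K_1=\beta]=\int_0^t G(s)\mu^\beta(s,\epsilon)\,ds$ with $G(s)=\proba[T_1>s]$, and summing over $\beta$ yields the ODE $G'=-\mu(\cdot,\epsilon)G$, $G(0)=1$, from which both (i) and (ii) follow at once. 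Either way your proof closes; I would just commit to one of the two repairs rather than leaving the delicate step as a remark.
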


\begin{proof}[Proof of Lemma \ref{Tchain}]

By an immediate application of the strong Markovian property and Lemma \ref{lawJumps}, we easily get that for any $j \geq 1$,

\beas
f^{\Delta T_j}(t|\cale( T_{j-1})) = f(t,\cale( T_{j-1})),
\eeas
and
\beas
\proba[K_j = \beta | \Delta T_j, \cale(T_{j-1})] = \frac{\mu^\beta(\Delta T_j,\cale(T_{j-1}))}{\mu(\Delta T_j,\cale(T_{j-1}))},
\eeas
and thus, the joint density of $(\Delta T_1,K_1,...,\Delta T_n,K_n)$ given $\cale(0)=\epsilon$ can be written as 

\beas
f^{(\Delta T_1,K_1,...,\Delta T_n,K_n)}(t_1,k_1,...,t_n,k_n|\epsilon) &=& \frac{\mu^{k_n}(t_n,\bar{\epsilon}(t_1,k_1,...,t_{n-1},k_{n-1}|\epsilon))}{\mu(t_n,\bar{\epsilon}(t_1,k_1,...,t_n|\epsilon))}f(t_n,\bar{\epsilon}(t_1,...,t_{n-1},k_{n-1}|\epsilon))\times... \\
&...&\times \frac{\mu^{k_1}(t_1,\epsilon)}{\mu(t_1,\epsilon)}f(t_1,\epsilon),
\eeas
where $\bar{\epsilon}(t_1,k_1,...,t_j,k_j|\epsilon)$ is the value of $\cale(T_j)$ when $(\Delta T_1,K_1,...,\Delta T_n,K_n) = (t_1,k_1,...,t_j,k_j)$, i.e. 

\beas
\bar{\epsilon}_{\alpha\beta}(t_1,k_1,...,t_j,k_j|\epsilon) = e^{-(t_1+...+t_j)}\epsilon_{\alpha\beta} + \sum_{i \leq j|k_i = \beta }{c_{\alpha\beta}e^{-a_{\alpha\beta}(t_{i+1}+...+t_j)}}.
\eeas

Note that in particular both $\bar{\epsilon}(t_1,k_1,...,t_j,k_j|\epsilon)$ and $f^{(\Delta T_1,K_1...,\Delta T_n,K_n)}$ are $C^\infty$ in $(t_1,...,t_n)$ and in $\epsilon$. We now show that $\mathbf{Q}^t$ has a lower semi-continuous component $\mathbf{T}$. To do this, consider first the following domination, for $\epsilon \in \reels_+^{d \times d}$ and $A \in \mathbf{B}(\reels_+^{d \times d})$ :

\beas
\mathbf{Q}^t(\epsilon,A) &=& \proba[\cale(t) \in A | \cale(0) = \epsilon]\\
&\geq& \proba\l[\{\cale(t) \in A\} \cap \{\#\{j | T_j \leq t\} = d^2\} \cap \bigcap_{\beta \in \mathbf{I}}\l\{\Delta N_{T_{(\beta-1)d+i}}^\beta=1, i \leq d\r\} |\cale(0) = \epsilon\r].
\eeas

We thus select only events for which there are exactly $d^2$ jumps before $t$, and the first $d$ jumps occur on $N^1$, the next $d$ ones occur on $N^2$, and so on. In that case, the value of $\cale(t)$ is completely determined by $(t,T_1,...,T_{d^2})$, and has the form

\beas
\bar{\epsilon}_{\alpha\beta}(t,T_1,...,T_{d^2}|\epsilon) = \epsilon_{\alpha\beta}e^{-a_{\alpha\beta}t}+\sum_{(\beta-1)d \leq i \leq \beta d -1}{c_{\alpha\beta}e^{-a_{\alpha\beta}(t- T_1+...+T_i)} }.
\eeas

Again, $\epsilon \to \bar{\epsilon}(t,t_1,...,t^{d^2} |\epsilon)$ is $C^\infty$ in $\epsilon$. For conciseness, we write $\phi_n(t_1,...,t_n|\epsilon)$ for $f^{(\Delta T_1,K_1,...,\Delta T_n,K_n)}(t_1,k_1,...,t_n,k_n|\epsilon)$ with each $k_i$ equal to the unique $\beta$ such that $ (\beta-1)d \leq i \leq \beta d -1$ and $k_{d^2+1}$ is any arbitrary $\beta$ (it plays no role). We have thus

\beas
\mathbf{Q}^t(\epsilon,A) &\geq& \idotsint_{\reels_+^{d^2+1}}{1_{\{\bar{\epsilon}(t,t_1,...,t_{d^2}|\epsilon) \in A\}}1_{\{t_1+...+t_{d^2} < t\} \cap \{t_1+...+t_{d^2+1} > t\}}\phi_{d^2+1}(t_1,...,t_{d^2}|\epsilon)dt_1..dt_{d^2+1}}. 
\eeas

Since the indicator $1_{\{\bar{\epsilon}(t,t_1,...,t_{d^2}|\epsilon) \in A \}}$ is not lower semi-continous in $\epsilon$ whenever $A$ is not open, we need to remove the dependency of this term in $\epsilon$ by a change of variable. Consider the transformation 
\beas
\Psi_\epsilon : (t_1,...,t_{d^2}) \to \Psi_\epsilon(t_1,...,t_{d^2}) = \epsilon(t,t_1,...,t_{d^2}|\epsilon).
\eeas

$\Psi_\epsilon$ is clearly $C^\infty$ in its argument and in $\epsilon$. We write $J_{\Psi_\epsilon}(t_1,...,t_n) = \l|\text{det}\nabla \Psi_\epsilon(t_1,...,t_n)\r|$ the determinant of the Jacobian matrix of $\Psi_\epsilon$ at the point $(t_1,...,t_n)$. Tedious but straightforward computation leads to the following representation of the Jacobian :

\beas
J_{\Psi^\epsilon}(t_1,...,t_{d^2}) &=& \begin{vmatrix} M_1(t_1,...,t_{d}) & * & \cdots & * \\ 0 & M_2(t_1,...,t_{2d}) & \cdots&* \\ \vdots & \ddots & \ddots &\vdots\\ 0 & \cdots & \cdots & M_d(t_1,...,t_{d^2})
\end{vmatrix}\\
&=& \prod_{ i \in \mathbf{I}} J_i(t_1,...,t_{id}),
\eeas
where $J_k(t_1,...,t_{kd}) = \text{det} M_k(t_1,...,t_{kd})$. Note that in particular it does not depend on $\epsilon$. Thus it is sufficient to show that under some conditions, each $J_k \neq 0$.  Again, after elementary linear operations on the $M_k$'s, it is possible to represent $|J_k|$ as follows : 

\beas
J_k(t_1,...,t_{kd}) &=& \prod_{ i \in \mathbf{I}}{c_{ik}a_{ik}e^{-a_{ik}(t-t_1-...-t_{1+(k-1)d})}} \begin{vmatrix} 1& \cdots & \cdots & 1 \\ e^{a_{1k}t_{2+(k-1)d}} & \cdots & \cdots & e^{a_{dk}t_{2+(k-1)d}} \\ \vdots & \vdots & \vdots&\vdots \\ e^{a_{1k}(t_{2+(k-1)d}+...+t_{dk})} & \cdots & \cdots & e^{a_{1k}(t_{2+(k-1)d}+...+t_{dk})}
\end{vmatrix}.
\eeas

Telling if $J_k \neq 0$ is not easy for general $(t_1,...,t_{d^2})$. On the other hand, if there exists $k \in \mathbf{I}$ and two indices $i,j \in \mathbf{I}$ such that $a_{ik}=a_{jk}$, two columns of the determinant are equal and thus $J_k = 0$. $J_k$ also vanishes if one $c_{ij} = 0$. For now let's assume for simplicity that for any $k \in \mathbf{I}$, for any $i,j \in \mathbf{I}$, $a_{ik} \neq a_{jk}$, and $c_{ij} \neq 0$. \\

Note that if $t_1 = ... = t_{d^2} = t_0 >0$, each $J_k$ is a Vandermonde determinant, and thus 

\beas
J_k(t_0,...,t_0) &=& \prod_{i \in \mathbf{I}}{{c_{ik}a_{ik}e^{-a_{ik}(t-(1+(k-1)d)t_0)}}}\prod_{j \geq l}(e^{-a_{lk}t_0}-e^{-a_{jk}t_0}) \neq 0.
\eeas

By continuity, if all the $t_i$'s are close enough to each other, say, belong to some interval $]t_0 - \eta, t_0+\eta[$ for $\eta$ small enough and such that $t_0 -\eta >0$, we have $J_{\Psi_\epsilon} \neq 0$. Writing 

\beas
B(t_1,...,t_{d^2+1}) = \{(t_1,...,t_{d^2}) \in ]t_0-\eta,t_0+\eta[\} \cap \{t_1+...+t_{d^2} < t\} \cap \{t_1+...+t_{d^2+1} > t\},
\eeas
we have by change of variable

\beas
\mathbf{Q}^t(\epsilon,A) &\geq& \idotsint_{ \reels_+^{d^2+1}}{1_{\{\bar{\epsilon}(t,t_1,...,t_{d^2}|\epsilon) \in A\}}1_{\{B(t_1,...,t_{d^2+1})\}}\phi_{d^2+1}(t_1,...,t_{d^2+1}|\epsilon)dt_1...dt_{d^2+1}} \\
&=&\idotsint_{\Psi_\epsilon\l(\reels_+^{d^2}\r)\times \reels_+}{1_{\{(y_1,...,y_{d^2}) \in A\}}1_{\{B(\Psi_\epsilon^{-1}(y_1,...,y_{d^2}),t_{d^2+1})\}}\cdots}\\
&\cdots&\phi_{d^2+1}\l(\Psi^{-1}(y_1,...,y_{d^2}),t_{d^2+1}|\epsilon\r)J_{\Psi_\epsilon}\l(\Psi_\epsilon^{-1}(y_1,...,y_{d^2})\r)^{-1}dy_1...dy_{d^2}dt_{d^2+1}.
\eeas

Finally, it is easy to see that $B(\Psi_\epsilon^{-1}(y_1,...,y_{d^2}),t_{d^2+1})$ is a countable union of open intervals whose boundaries depend continuously on $\epsilon$, and $1_{B(\Psi_\epsilon^{-1}(y_1,...,y_{d^2}),t_{d^2+1})}$ is thus lower semi-continuous in $\epsilon$. Because $\phi_{d^2+1}$, $\Psi^{-1}$ and $J_\Psi$ are continuous in their component and in $\epsilon$, this finally shows that $\mathbf{Q}^t$ has a non-trivial lower semi-continuous component $\mathbf{T}$, and $\cale$ is a T-chain. \\
\medskip

Since $0$ is trivially a reachable point for $\cale$,  Proposition 6.2.1 in \cite{MeynTweedieMarkovChain2009} implies that $\cale$ is $\psi$-irreducible. Thus Theorem 6.2.5 (ii) in  \cite{MeynTweedieMarkovChain2009} implies that every compact set is petite, and we are done.

\end{proof}

\begin{remark} \rm
If there exists $k, i, j \in \mathbf{I}$ such that $a_{ik}=a_{jk}= a$ the kernel of $\cale$ is degenerate. On the other hand, one can easily see that, if $c_{jk} \neq 0$,  

$$\epsilon_{ik}(t) = c_{ik}\int_0^t{e^{-a(t-s)}dN_s^k} = \frac{c_{ik}}{c_{jk }}\epsilon_{jk}(t).$$ 

Thus by reducing the dimension of the state space of $\cale$, i.e. putting $\tilde{\cale} = \l[\epsilon_{\alpha\beta}\r]_{\alpha\beta \in \mathbf{I}^2-{(i,k)}}$, we see that $\tilde{\cale}$ is obviously still Markovian. We can also do this operation whenever some $c_{ij}$ is null. $\tilde{\cale}$ is then still a Markovian process but verifies the non-degeneracy conditions on $A$ and $C$. In practice we can therefore assume the non-degeneracy condition, since the ergodicity of the reduced process implies obviously the ergodicity of $\cale$.
\end{remark}



We finally turn to the verification of [B2], [M2], and [B4] to complete the proof of Theorem \ref{thmHawkesQLA}.

\begin{lemma*}
	The parametric exponential Hawkes model verifies \textnormal{[B2]}. 
\end{lemma*}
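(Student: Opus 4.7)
The plan is to treat the three conditions of \textnormal{[B2]} separately: (ii) and (iii) are immediate consequences of the positivity of $\nu_\alpha$, while (i) requires a reduction to moment bounds for the excitation process $\cale$.

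First, since $\nu_\alpha \geq \underline{\nu} > 0$ for every $\theta \in \Theta$ and each integral $\int_0^{t-}{c_{\alpha\beta}e^{-a_{\alpha\beta}(t-s)}dN_s^\beta}$ is non-negative, one has $\lambda^\alpha(t,\theta) \geq \underline{\nu}>0$ almost surely. In particular neither $\lambda^\alpha(t,\theta)$ nor $\lambda^\alpha(t,\theta^*)$ ever vanishes, yielding (iii), and $\sup_{\theta \in \Theta}|\lambda^\alpha(t,\theta)^{-1}| \leq \underline{\nu}^{-1}$ uniformly in $t$ and $\omega$, yielding (ii).

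For (i), I would differentiate $\lambda^\alpha$ explicitly. Only $\nu_\alpha$, $(c_{\alpha\beta})_\beta$ and $(a_{\alpha\beta})_\beta$ enter $\lambda^\alpha(t,\theta)$, and successive derivatives in $c$ and $a$ yield linear combinations of terms of the form $c_{\alpha\beta}^{n_1}\int_0^{t-}{(t-s)^{n_2}e^{-a_{\alpha\beta}(t-s)}dN_s^\beta}$ with $n_1 \in \{0,1\}$ and $n_2 \leq 4$. Taking the supremum over $\theta \in \Theta$ and using $c_{\alpha\beta} \leq \bar{c}$ together with $a_{\alpha\beta} \geq \underline{a}$, the problem reduces to establishing, for each $k \in \{0,\dots,4\}$, $\beta \in \mathbf{I}$ and $p > 1$,
\beas
\sup_{t \in \reels_+}\left\|\int_0^{t-}{g_k(t-s)dN_s^\beta}\right\|_p < \infty, \qquad g_k(u) := u^k e^{-\underline{a}u}.
\eeas
I would then decompose $dN_s^\beta = d\tilde{N}_s^\beta + \lambda^\beta(s,\theta^*)ds$ and handle the two pieces separately: Lemma \ref{lemmaMajoration} controls the martingale part, and Minkowski's integral inequality controls the absolutely continuous part. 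Since $g_k \in \mathbb{L}^1(\reels_+) \cap \mathbb{L}^{2p}(\reels_+)$ thanks to $\underline{a}>0$, both contributions are dominated by a constant multiple of $\sup_{s \in \reels_+}\|\lambda^\beta(s,\theta^*)\|_{2p}$.

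The final, and most substantial, step is then to prove $\sup_{t \in \reels_+}\|\lambda^\beta(t,\theta^*)\|_p < \infty$ for every $p > 1$, which is where the structural results of Section 4.2 come into play. Writing $\lambda^\beta(t,\theta^*) = \nu^*_\beta + \sum_{\gamma \in \mathbf{I}} \epsilon_{\beta\gamma}(t)$, the intensity is an affine function of the excitation matrix $\cale(t)$ with non-negative coefficients, so $|\lambda^\beta(t,\theta^*)|^p \leq K_p V(\cale(t))$ for the exponential Lyapunov function $V(\epsilon) = e^{\langle M,\epsilon\rangle}$ of Proposition \ref{propVgeo}. Applying the $V$-geometric ergodicity inequality (\ref{eqVgeoEpsilon}) with $s=0$ and the deterministic initial condition $\cale(0) = 0$ then yields $\sup_t \esp[V(\cale(t))] < \infty$ and closes the estimate. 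The only delicate point I anticipate is ensuring that the exchange of $\sup_\theta$ with $\esp[\cdot]^{1/p}$ leaves behind a deterministic kernel that remains integrable at infinity; this is automatic here since $\underline{a}>0$, so no further work is needed.
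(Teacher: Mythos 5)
Your proposal is correct and follows essentially the same route as the paper: (ii)--(iii) come from $\lambda^\alpha(t,\theta)\geq\underline{\nu}>0$, and (i) is reduced to uniform $\mathbb{L}^p$ bounds on $\int_0^{t-}(t-s)^k e^{-\underline{a}(t-s)}dN_s^\beta$ via Lemma \ref{lemmaMajoration}, with the key moment bound $\sup_{t}\|\lambda^\beta(t,\theta^{*})\|_p<\infty$ drawn from the $V$-geometric ergodicity of $\cale$. The only cosmetic differences are that you use Minkowski's integral inequality for the Lebesgue part where the paper uses Jensen's inequality with a normalized exponential measure, and you spell out the domination of polynomial moments by $V(\epsilon)=e^{\langle M,\epsilon\rangle}$ where the paper simply asserts it as an immediate consequence of Proposition \ref{propVgeo}.
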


\begin{proof}
Points (ii) and (iii) are immediate since for any $t \in \reels_+$ and $\alpha \in \mathbf{I}$ we have $\lambda^\alpha(t,\theta) \geq \underline{\nu} >0$.
Before showing (i), note that any component of $\partial_\theta^i \lambda(t,\theta)$, ($i \leq 3$) can be expressed as a linear combination of the following terms :

\beas
\nu_\alpha &\leq& \bar{\nu},\\
\int_0^{t-}{c_{\alpha\beta}(t-s)^j e^{-a_{\alpha\beta}(t-s)}dN_s^\beta} &\leq& \int_0^{t-}{\bar{c}(t-s)^j e^{-\underline{a}(t-s)}dN_s^\beta},\\
\int_0^{t-}{(t-s)^j e^{-a_{\alpha\beta}(t-s)}dN_s^\beta} &\leq& \int_0^{t-}{(t-s)^j e^{-\underline{a}(t-s)}dN_s^\beta},\\
\eeas
for $j \leq 3$. It is thus sufficient to show that for any $p >0$, $\alpha,\beta \in \mathbf{I}$, 

\beas
\sup_{t \in \reels_+} \esp \l| \int_0^{t-}{(t-s)^j e^{-\underline{a}(t-s)}dN_s^\beta} \r|^p &<& +\infty.
\eeas

Without loss of generality we may assume that $p = 2^k$, $k \geq 1$. Using Lemma \ref{lemmaMajoration} again, we have for some constant $B$ depending on $k$ only, 

\beas
\esp \l| \int_0^{t-}{(t-s)^j e^{-\underline{a}(t-s)}dN_s^\beta} \r|^{2^k} &\leq&B\esp  \int_0^{t-}{(t-s)^{2^kj} e^{-2^k\underline{a}(t-s)}\lambda^\beta(s,\theta^{*})ds} +B\esp \l| \int_0^{t-}{(t-s)^{2j} e^{-2\underline{a}(t-s)}\lambda^\beta(s,\theta^{*})ds} \r|^{2^{k-1}} \\
&+&B\esp \l| \int_0^{t-}{(t-s)^j e^{-\underline{a}(t-s)}\lambda^\beta(s,\theta^{*})ds} \r|^{2^{k}} \\
&=& I + II + III.\\
\eeas

Since for any $q \in \naturels$, $\esp [\lambda(s,\theta^{*})^q]$ converges to a finite value whenever $s \to +\infty$ (it is an immediate consequence of the $V$-geometric ergodicity shown in Proposition \ref{propVgeo}), for all $\beta \in \mathbf{I}$, we bound from above $\esp [\lambda^\beta(s,\theta^{*})^q]$  by some common value $m_q$. We also write $M(m,r) = \int_0^{+\infty}{u^m e^{-ru}du} < +\infty$ for every $ m \in \naturels$ and $r >0$. To control $I$, write : 

\beas
I &\leq& B m_1  \int_0^{t}{(t-s)^{2^kj} e^{-2^k\underline{a}(t-s)}ds}\\
&\leq& Bm_1 M(2^kj,2^k\underline{a}).
\eeas

To deal with $II$, put $q = 2^{k-1}$. If $t=0$, we trivially have $II = 0$. When $t >0$, by Jensen's inequality applied to the probability measure $\mu(ds)= \frac{e^{2\underline{a}s}ds}{\int_0^t{e^{2\underline{a}s}ds}}$ on $[0,t]$, we have 

\beas
II &=&\esp \l| \int_0^{t-}{(t-s)^{2j} e^{-2\underline{a}(t-s)}\lambda^\beta(s,\theta^{*})ds} \r|^{q}\\
&\leq& \l(\int_0^t{e^{2\underline{a}s}ds}\r)^{q} \esp \l| \int_0^{t-}{(t-s)^{2qj} e^{-2\underline{a}qt}\l(\lambda^\beta(s,\theta^{*})\r)^q\mu(ds)} \r|\\
&\leq&\l(\int_0^t{e^{2\underline{a}s}ds}\r)^{q-1}e^{-2\underline{a}(q-1)t}\esp \l| \int_0^{t-}{(t-s)^{2qj} e^{-2\underline{a}(t-s)}\l(\lambda^\beta(s,\theta^{*})\r)^qds} \r|\\
&\leq& \inv{(2\underline{a})^{q-1}}M(2qj,2\underline{a})m_q.
\eeas


Finally $III$ is dominated as $II$.
\end{proof}

\begin{lemma*}
The parametric exponential Hawkes model verifies \textnormal{[M2]}.
\label{hawkesB3}
\end{lemma*}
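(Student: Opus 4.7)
The plan is to represent the triplet $X^\alpha_t(\theta) := (\lambda^\alpha(t,\theta^*),\lambda^\alpha(t,\theta),\partial_\theta \lambda^\alpha(t,\theta))$ as a measurable function of an enlarged Markov process which inherits the $V$-geometric ergodicity of Proposition \ref{propVgeo} uniformly in $\theta$, and to deduce both parts of [M2] from the resulting exponential rate. This works because, since $\lambda^\alpha \geq \underline\nu > 0$, the functions in $D_\uparrow(E,\reels)$ evaluated on $X^\alpha_t(\theta)$ never see the singularities of the class and are simply of polynomial growth in the extended state, and exponential decay dominates any polynomial one.

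First I would augment the state space. Beside the true-parameter excitations $\epsilon^{*}_{\alpha\beta}(t) = \int_0^{t-}c^{*}_{\alpha\beta}e^{-a^{*}_{\alpha\beta}(t-s)}dN_s^\beta$ that make up $\lambda^\alpha(t,\theta^*)$, and the $\theta$-parametrized excitations $\epsilon^{\theta}_{\alpha\beta}(t)$ that make up $\lambda^\alpha(t,\theta)$, the derivative $\partial_\theta \lambda^\alpha(t,\theta)$ brings in the auxiliary processes $\tilde\epsilon^{\theta}_{\alpha\beta}(t) = \int_0^{t-}(t-s)c_{\alpha\beta}e^{-a_{\alpha\beta}(t-s)}dN_s^\beta$ (through $\partial_{a_{\alpha\beta}}$). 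All of these components jump only at jump times of $N$ (the tilded ones with zero jump size since $(t-s)|_{s=t}=0$) and evolve deterministically by linear ODEs between jumps, so the stacked process $\hat{\cale}(t) = (\epsilon^*(t),\epsilon^\theta(t),\tilde\epsilon^\theta(t))$ is Markovian. Its jump intensity $\lambda(t,\theta^*)$ is itself a function of $\hat\cale$, and $X^\alpha_t(\theta) = F_\theta(\hat\cale(t))$ for an explicit $F_\theta$.

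Next I would extend the Lyapunov argument of Proposition \ref{propVgeo} to $\hat\cale$ with $V(\hat\epsilon) = \exp(\langle M,\hat\epsilon\rangle)$, where $M$ is taken as a suitable Perron-Frobenius eigenvector of an enlarged spectral-radius matrix. The generator of $\hat\cale$ combines deterministic exponential decay on each component (strictly positive $a$-coefficients) with a jump part whose intensity is linear in $\epsilon^*$; since $\rho(\Phi^*)<1$ this produces a negative drift for large $\hat\epsilon$, uniformly over the compact parameter range of $\Theta$. The T-chain and $\psi$-irreducibility arguments of Lemma \ref{Tchain} adapt to the larger state, yielding $V$-geometric ergodicity of $\hat\cale$ uniformly in $\theta$; moreover, a small power $V^\eta$ retains the drift condition while $\psi\circ F_\theta$ remains dominated by a constant times $V^\eta$.

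Both parts of [M2] then follow. For stability, $V^\eta$-geometric ergodicity directly gives $\sup_\theta |\esp[\psi(X^\alpha_t(\theta))] - \esp[\psi(\bar X^\alpha(\theta))]| \leq Kr^t = o(t^{-\gamma})$ for any $\gamma<\half$. For mixing, writing $\Psi_\theta = \psi\circ F_\theta$ and using the Markov property of $\hat\cale$,
$$\text{Cov}[\phi(X_s),\psi(X_{s+u})] = \esp[\phi(X_s)(Q^u\Psi_\theta(\hat\cale_s) - \pi(\Psi_\theta))] + \esp[\phi(X_s)](\pi(\Psi_\theta) - \esp[\psi(X_{s+u})]),$$
both terms being exponentially small in $u$ by $V^\eta$-geometric ergodicity combined with Cauchy-Schwarz and the uniform moment bounds of [B2], which dominates any $u^{-\epsilon}$. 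The hard part will be the uniform-in-$\theta$ verification of the drift condition and T-chain property on the enlarged process $\hat\cale$, requiring a careful adaptation of Proposition \ref{propVgeo} and Lemma \ref{Tchain} to the larger state space while tracking how the constants depend on the $\Theta$-bounds.
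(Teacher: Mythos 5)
Your overall strategy --- stack $(\epsilon^{*},\epsilon^{\theta},\tilde\epsilon^{\theta})$ into an enlarged Markov process $\hat{\cale}$, prove its $V$-geometric ergodicity uniformly in $\theta$, and read off both clauses of [M2] from the resulting exponential rate --- is genuinely different from the paper's, and it contains two gaps that are not merely laborious. First, the enlarged chain is \emph{degenerate} on the product state space: $\epsilon^{*}_{\alpha\beta}$ and $\epsilon^{\theta}_{\alpha\beta}$ are integrals of the exponentials $e^{-a^{*}_{\alpha\beta}(t-s)}$ and $e^{-a_{\alpha\beta}(t-s)}$ against the \emph{same} measure $dN^{\beta}_s$, so whenever $a_{\alpha\beta}=a^{*}_{\alpha\beta}$ (in particular at and near $\theta=\theta^{*}$, the very point where you will eventually need the ergodicity) one has $\epsilon^{\theta}_{\alpha\beta}=(c_{\alpha\beta}/c^{*}_{\alpha\beta})\,\epsilon^{*}_{\alpha\beta}$ and $\hat{\cale}$ lives on a proper affine subspace that moves with $\theta$. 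A T-chain/$\psi$-irreducibility argument on a fixed enlarged space therefore fails, and restricting to the attainable subspace destroys the uniformity you need (the paper's remark after its T-chain lemma shows this degeneracy already has to be handled by hand for the base chain alone). Second, even granting a uniform drift condition, Meyn--Tweedie's Theorem 6.1 produces constants $B$ and $r$ in a non-quantitative way; turning ``drift plus petite compacts for each $\theta$'' into ``$\sup_{\theta}$ of the geometric-ergodicity bound is finite'' requires a uniform minorization on a common small set, which is precisely the part you defer to ``careful adaptation'' and is where the real work lies. Your final covariance decomposition and the domination $\psi\circ F_{\theta}\le CV^{\eta}$ are fine once that uniform ergodicity is in hand, but as written the proof rests on an unproved and delicate statement.

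The paper avoids both difficulties by never introducing a $\theta$-dependent Markov chain. It keeps only the single chain $\cale$ driven by $\theta^{*}$, whose $V$-geometric ergodicity is already established in Proposition \ref{propVgeo}. For the mixing clause it replaces $X(t+u,\theta)$ by the truncation $\tilde X(t+s,t+u,\theta)$ that integrates the kernels only over $[t+s,t+u]$; the truncation error is $O(e^{-\tilde a(u-s)})$ uniformly on the compact $\Theta$ because the kernels decay at rate at least $\underline a$, while the truncated variable is measurable with respect to $\sigma\{\cale_v : v\ge t+s\}$, so two applications of the Markov property reduce the covariance to $\bigl|\esp[F(\cale_{t+s},u-s,\theta)\mid\cale_t]\bigr|\le Rr^{s}\sqrt{V(\cale_t)}$ via the $\sqrt V$-ergodicity of the \emph{fixed} chain; choosing $s=u/2$ gives the exponential mixing rate with all $\theta$-dependence absorbed into uniform moment bounds. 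For the stability clause it does not use Markov ergodicity at all but invokes Proposition \ref{stability}(iii), the $\mathbb{L}^1$ exponential coupling of the transient and stationary intensities, which is uniform in $\theta$ for the same reason. If you want to salvage your route, you would at minimum have to (i) quotient out the $\theta$-dependent degeneracies of $\hat{\cale}$ and (ii) establish a minorization uniform over $\Theta$; the paper's conditioning-plus-truncation argument is the standard way to sidestep both.
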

 \begin{proof}[Proof of Lemma \ref{hawkesB3}]
Let us deal with the mixing condition first. Consider two elements $\phi, \psi$ of $D_\uparrow(E,\reels)$. Note that since the exponential Hawkes process intensities are bounded from below it is sufficient to consider the case where $\phi, \psi$ and their derivatives are of polynomial growth in their arguments. Define thus for $\alpha \in \mathbf{I}$,

\beas
X^\alpha(t,\theta) &=& (\lambda^\alpha(t,\theta^{*}),\lambda^\alpha(t,\theta),\partial_\theta \lambda^\alpha(t,\theta)),
\eeas 
and its truncation at $s \leq t$ :

\beas
\tilde{X}^\alpha(s,t,\theta)&=& \l(\lambda^\alpha(t,\theta^{*}),\sum_{\beta \in \mathbf{I}}\int_s^{t–}{h_{\alpha\beta}(t-u,\theta)dN_u^\beta},\sum_{\beta \in \mathbf{I}}\int_s^{t–}{\partial_\theta h_{\alpha\beta}(t-u,\theta)dN_u^\beta}\r).
\eeas

We then write, for some $s \leq t-u$,

\beas
\l|\text{Cov}[\phi(X(t,\theta)),\psi(X(t+u,\theta))]\r| &\leq& \l|\text{Cov}[\phi(X(t,\theta)),\psi(\tilde{X}(t+s,t+u,\theta))]\r| \\
&+& \l|\text{Cov}\l[\phi(X(t,\theta)),\nabla \psi(\xi(t+s,t+u,\theta))\l(X(t+u,\theta)-\tilde{X}(t+s,t+u,\theta)\r)\r]\r|\\
&=& A_1(\theta)+A_2(\theta),
\eeas
where $\xi(t+s,t+u,\theta) \in \l[\tilde{X}(t+s,t+u,\theta),X(t+u,\theta)\r]$. It is easy to see that, given the exponential kernel of the intensities, there exists $\tilde{a}$, such that for any $m \geq 1$,

\beas
\sup_{\theta \in \Theta} \esp \l|X(t+u,\theta)-\tilde{X}(t+s,t+u,\theta)\r|^2 &=& O(e^{-\tilde{a}(u-s)}).
\eeas

Therefore, as $\psi$, $\nabla \psi$ and $\phi$ are of polynomial growth, this leads to 

\beas
\sup_{\theta \in \Theta} A_2(\theta) &=& O(e^{-\tilde{a}(u-s)})
\eeas

For a random variable $\phi(Z)$, we now write $\hat{\phi}(Z)$ for $\phi(Z) - \esp[\phi(Z)]$. We thus have 

\beas
A_1(\theta) &=& \l|\esp \l[\hat{\phi}(X(t,\theta))\hat{\psi}(\tilde{X}(t+s,t+u,\theta))\r]\r| \\
&=&　　\l|\esp \l[\hat{\phi}(X(t,\theta))\esp\l[\hat{\psi}(\tilde{X}(t+s,t+u,\theta)) \l|\r. \calf_{t+s}^{\cale}\r]\r]\r| \\
&=&\l|\esp \l[\hat{\phi}(X(t,\theta))\esp\l[\hat{\psi}(\tilde{X}(t+s,t+u,\theta)) \l|\r. \cale_{t+s}\r]\r]\r|, \\
\eeas
where we have applied the strong Markovian property to the process $\hat{\psi}(\tilde{X}(t+s,t+u,\theta))$ which is measurable with respect to the $\sigma$-algebra $\sigma\{\cale_v | t+s \leq v \leq t+u\}$. As $\esp\l[\hat{\psi}(\tilde{X}(t+s,t+u,\theta)) \l|\r. \cale_{t+s}\r]$ is a function of $\cale_{t+s}$, $u-s$ and $\theta$, we denote it by $F(\cale_{t+s},u-s,\theta)$. Note that $F(\cale_{t+s},u-s,\theta)$ is centered.  We thus get 

\beas
A_1(\theta) &=&\l| \esp \l[\hat{\phi}(X(t,\theta)) F(\cale_{t+s},u-s,\theta)\r]\r| \\
&=& \l|\esp \l[\hat{\phi}(X(t,\theta))\esp\l[F(\cale_{t+s},u-s,\theta) | \calf_{t}^{\cale}\r]\r]\r| \\
&=& \l|\esp \l[\hat{\phi}(X(t,\theta))\esp\l[F(\cale_{t+s},u-s,\theta) | \cale_{t}\r]\r] \r|,\\
\eeas
using once again the strong Markovian property. Again, because of the poynomial growth of $\psi$ and the uniform boundedness of the moments of $\cale
_v$ in $v \in \reels_+$, one can easily check that  $F(\cale_{t+s},u-s,\theta)$ can be bounded from above uniformly in $u-s$ and $\theta$ by some measurable random variable $F(\cale_{t+s})$ independent of $u-s$ and $\theta$, and dominated by $\sqrt{V}$ up to a multiplication by a constant factor. Finally, as $\cale$ is also $\sqrt{V}$-geometrically ergodic (see \cite{MeynTweedieMarkovChain2009}, lemma 15.2.5), we have that, since $F(\cale_{t+s},u-s,\theta)$ is centered,

\beas 
\l|\esp\l[F(\cale_{t+s},u-s,\theta) | \cale_{t}\r]\r| &\leq& R r^s\sqrt{V(\cale_t)}. 
\eeas

From here, we can conclude for $A_1(\theta)$ that

\beas
A_1(\theta) &\leq&  R r^s\esp \l[\l|\hat{\phi}(X(t,\theta))\r|\sqrt{V(\cale_t)}\r]
\eeas
and thus, using Cauchy-Schwarz inequality and the uniform boundedness of  $\esp \l[V(\cale_t)\r]$, (it is an easy consequence of Dynkin's formula, see e.g. \cite{MeynTweedieprocessiii1993}), we get that 

\beas 
\sup_{\theta \in \Theta} A_1(\theta) \leq R r^s \sup_{\theta \in \Theta}\esp \l[\l|\hat{\phi}(X(t,\theta))\r|^2\r]^\half \esp[V(\cale_t)]^{\half},
\eeas
thus
\beas
A_1(\theta) &=&  O(r^s).
\eeas

Setting $s = \frac{u}{2}$, we finally get that there exists some $\tilde{r} <1$ such that 

\beas
\sup_{\theta \in \Theta} \l|\text{Cov}[\phi(X(t,\theta)),\psi(X(t+u,\theta))]\r| &=& O(\tilde{r}^{u}),
\eeas
which shows the mixing condition with an exponential rate. We now turn to the stability condition. As in [M2], for a process $Y$, $\bar{Y}$ designates its stationary version if it exists.  We have 

\beas
t^\gamma \esp\l[\l|\phi(X(t,\theta)) - \phi(\bar{X}(t,\theta)) \r|\r] &\leq& t^\gamma\esp\l[\l|\nabla \phi(\xi(t,\theta))\r| \l|X(t,\theta) - \bar{X}(t,\theta) \r|\r] 
\eeas
for some $\xi(t,\theta) \in [X(t,\theta),\bar{X}(t,\theta)]$. The right hand side can be split as follows : 

\beas
t^\gamma\esp\l[\l|\nabla \phi(\xi(t))\r| \l|X(t,\theta) - \bar{X}(t,\theta) \r|\r] &\leq& t^\gamma\esp\l[\l|\nabla \phi(\xi(t))\r| \l(1_{\{\nabla \phi(\xi(t)) \leq  t^{1+ \gamma}\}}+1_{\{\nabla \phi(\xi(t)) >  t^{1+ \gamma}\}}\r)\l|X(t,\theta) - \bar{X}(t,\theta) \r|\r]\\
&\leq& t^{1+2\gamma}  \l\| |X(t,\theta) - \bar{X}(t,\theta)| \r\|_1 + t^{-1}\esp\l[\l|\nabla \phi(\xi(t))\r|^3  \l|X(t,\theta) - \bar{X}(t,\theta) \r| \r].\\
\eeas

Finally, thanks to Proposition \ref{stability} (iii), and because each $X^\alpha(t,\theta) - \bar{X}^\alpha(t,\theta)$ can be represented as a sum of integrals with respect to the measures $dN^\beta - d\bar{N}^\beta$ for $\beta \in \mathbf{I}$,  $\l\| |X(t,\theta) - \bar{X}(t,\theta)| \r\|_1$ is exponentially decreasing uniformly in $\theta$ and thus the first term on the right hand side tends to zero. For the second term, it is sufficient to notice that the expectation is uniformly bounded in $t$ and $\theta$. 

\end{proof}
\begin{lemma*}
 The exponential Hawkes model statisfies the non-degeneracy condition \textnormal{[B4]}. 
 \end{lemma*}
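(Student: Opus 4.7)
The plan is as follows. By Lemma \ref{lemmaM1} (which applies since [M2] has just been proved for exponential Hawkes), the operator $\pi_\alpha$ identifies with the expectation under the stationary intensity, and since $\lambda^\alpha \geq \underline\nu > 0$ the indicator $1_{\{u>0, v>0\}}$ in the definition of $\mathbb{Y}$ is identically $1$. Hence
\beas
\mathbb{Y}(\theta) = -\sum_{\alpha \in \mathbf{I}} \esp\!\l[\bar\lambda^\alpha(\theta^*)\,\phi\!\l(\frac{\bar\lambda^\alpha(\theta)}{\bar\lambda^\alpha(\theta^*)}\r)\r], \qquad \phi(r):= r-1-\log r \geq 0,
\eeas
with $\phi(r)=0$ iff $r=1$. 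Moreover $\mathbb{Y}$ is $C^2$ on $\overline{\Theta}$ by [B1]--[B2] and dominated convergence, $\mathbb{Y}(\theta^*)=0$, $\nabla_\theta \mathbb{Y}(\theta^*)=0$ (since $\phi'(1)=0$), and $-\partial_\theta^2 \mathbb{Y}(\theta^*) = \Gamma$ (since $\phi''(1)=1$). I would then argue by contradiction: assume $\chi_0=0$ and pick a subsequence $\theta_n \to \theta_\infty \in \overline{\Theta}$ with $-\mathbb{Y}(\theta_n) = o(|\theta_n - \theta^*|^2)$. Two cases arise.

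\emph{Case 1: $\theta_\infty \neq \theta^*$.} Continuity of $\mathbb{Y}$ gives $\mathbb{Y}(\theta_\infty) = 0$, so $\bar\lambda^\alpha(\cdot, \theta_\infty) = \bar\lambda^\alpha(\cdot, \theta^*)$ $\proba$-a.s.\ for every $\alpha$ (by left continuity on the underlying stationary version). I would then prove $\theta_\infty = \theta^*$ by a pathwise identifiability argument: at each jump time of $\bar N^\gamma$, $\bar\lambda^\alpha$ has a deterministic jump of size $c_{\alpha\gamma}$, which yields $c^{(\infty)}_{\alpha\gamma} = c^*_{\alpha\gamma}$; between two consecutive jumps $\bar\lambda^\alpha(t) - \nu_\alpha$ is a finite linear combination of decaying exponentials $e^{-a_{\alpha\beta}(t - T_k^\beta)}$, and on an interval containing at least one past event of each type (which has positive probability in any bounded window) linear independence of the family $\{e^{-a^{(\infty)}_{\alpha\beta} u}, e^{-a^*_{\alpha\beta} u}\}_\beta$ as functions of $u$ forces $a^{(\infty)}_{\alpha\beta} = a^*_{\alpha\beta}$; matching the constant term finally gives $\nu^{(\infty)}_\alpha = \nu^*_\alpha$, a contradiction.

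\emph{Case 2: $\theta_\infty = \theta^*$.} The Taylor expansion $\mathbb{Y}(\theta) = -\half (\theta - \theta^*)^T \Gamma (\theta - \theta^*) + o(|\theta - \theta^*|^2)$ reduces the problem to the positive definiteness of $\Gamma$. Given $v = (v^\nu, v^C, v^A) \neq 0$, one computes
\beas
v^T \partial_\theta \bar\lambda^\alpha(t, \theta^*) = v^\nu_\alpha + \sum_{\beta \in \mathbf{I}} \int_{-\infty}^{t-}\!\l(v^C_{\alpha\beta} - v^A_{\alpha\beta}\, c^*_{\alpha\beta}(t-s)\r) e^{-a^*_{\alpha\beta}(t-s)}\, d\bar N_s^\beta,
\eeas
and the same linear-independence argument, now applied to the richer basis $\{1,\, e^{-a^*_{\alpha\beta} u},\, u\, e^{-a^*_{\alpha\beta} u}\}_\beta$, shows that if $v^T \partial_\theta \bar\lambda^\alpha \equiv 0$ for every $\alpha$ then $v = 0$. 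Hence $v^T \Gamma v = \sum_\alpha \esp\bigl[(v^T \partial_\theta \bar\lambda^\alpha(\theta^*))^2 / \bar\lambda^\alpha(\theta^*)\bigr] > 0$, which rules out Case 2.

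The main obstacle is the linear-independence step used in both cases: one must exclude spurious equalities among sums of decaying exponentials when the rates $a^*_{\alpha\beta}$ may a priori coincide across $\beta$. As pointed out in the remark following Lemma \ref{Tchain}, one may always reduce the parametrization so that for each fixed $\alpha$ the rates $\{a^*_{\alpha\beta}\}_\beta$ are distinct and all $c^*_{\alpha\beta}$ are nonzero, which is precisely the regime in which the linear-independence argument concludes cleanly; the same reduction simultaneously ensures the non-degeneracy of $\Gamma$.
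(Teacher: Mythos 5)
Your overall architecture coincides with the paper's: reduce [B4] to (a) global identifiability, i.e.\ $\mathbb{Y}(\theta)\neq 0$ for $\theta\neq\theta^{*}$, obtained by forcing the stationary intensities to coincide pathwise, and (b) positive definiteness of the Fisher matrix $\Gamma$ to control the behaviour near $\theta^{*}$; the paper leaves the compactness glue between (a) and (b) implicit where you spell it out with a subsequence argument, which is a welcome clarification. Your identification of $C$ from the jump sizes of $\bar{\lambda}^\alpha$ is also exactly the paper's first step.

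The place where you diverge, and where your argument has a real gap, is the identification of the decay rates. You match sums of decaying exponentials between jumps and invoke linear independence, then acknowledge that this breaks down if rates coincide across $\beta$ (or if some $a_{\alpha\beta}$ equals some $a^{*}_{\alpha\beta'}$ with $\beta\neq\beta'$), and you propose to dispose of this via the remark following Lemma \ref{Tchain}. That remark does not do what you need: it reduces the state space of the excitation process $\cale$ when the \emph{true} rates satisfy $a^{*}_{ik}=a^{*}_{jk}$, in order to salvage irreducibility of the Markov chain; it says nothing about the competing parameter $\theta$ of your Case 1, whose rates may coincide in arbitrary ways even when those of $\theta^{*}$ do not, and it does not shrink the set $\Theta$ over which the infimum in [B4] is taken. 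The paper sidesteps the issue entirely: once $C=C^{*}$ is known, the right-hand side of $\nu_\alpha-\nu_\alpha^{*}=\sum_{\beta}\int_{-\infty}^{t}(c^{*}_{\alpha\beta}e^{-a^{*}_{\alpha\beta}(t-s)}-c_{\alpha\beta}e^{-a_{\alpha\beta}(t-s)})d\bar{N}^\beta_s$ becomes continuous and $C^1$ in $t$ with vanishing derivative, and that derivative is again a jump process whose jump at a point of $\bar{N}^\gamma$ equals $c_{\alpha\gamma}(a_{\alpha\gamma}-a^{*}_{\alpha\gamma})$; since $c_{\alpha\gamma}\geq\underline{c}>0$ this yields $A=A^{*}$ with no distinctness hypothesis. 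The same ``jump of the $n$-th derivative'' device is what the paper uses for the non-degeneracy of $\Gamma$ (giving $nc_{\alpha\beta}a_{\alpha\beta}^{n-1}\mathbf{x}_{\alpha,a}^\beta-a_{\alpha\beta}^{n}\mathbf{x}_{\alpha,c}^\beta=0$ for all $n$, hence $\mathbf{x}=0$), and you should substitute it for your linear-independence step in both of your cases.
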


 \begin{proof}
 We first show that whenever $\theta \neq \theta^{*}$, $\mathbb{Y}(\theta) \neq 0$. consider thus $\theta \in \Theta$ and assume that $\mathbb{Y}(\theta) = 0$. As 
 \beas
 -\mathbb{Y}(\theta) &=& \sum_{\alpha \in \mathbf{I}}\esp \left[\bar{\lambda}^\alpha(t,\theta)-\bar{\lambda}^\alpha(t,\theta^{*})-\text{log}\frac{\bar{\lambda}^\alpha(t,\theta)}{\bar{\lambda}^\alpha(t,\theta^{*})}\bar{\lambda}^\alpha(t,\theta^{*})\right],
 \eeas
 and, for any $\alpha \in \mathbf{I}$, 
 \beas
 \bar{\lambda}^\alpha(t,\theta)-\bar{\lambda}^\alpha(t,\theta^{*})-\text{log}\frac{\bar{\lambda}^\alpha(t,\theta)}{\bar{\lambda}^\alpha(t,\theta^{*})}\bar{\lambda}^\alpha(t,\theta^{*}) &\geq& 0,
 \eeas
we thus have 

 \beas
 \bar{\lambda}^\alpha(t,\theta)-\bar{\lambda}^\alpha(t,\theta^{*})-\text{log}\frac{\bar{\lambda}^\alpha(t,\theta)}{\bar{\lambda}^\alpha(t,\theta^{*})}\bar{\lambda}^\alpha(t,\theta^{*}) &=& 0 \text{   } \proba\text{-a.s.},
 \eeas
 and it is easy to see that this in turn implies that for any $t \in \reels_+$, and any $\alpha \in \mathbf{I}$,

 \beas
 \bar{\lambda}^\alpha(t,\theta)&=&\bar{\lambda}^\alpha(t,\theta^{*}) \text{   } \proba\text{-a.s.}
 \eeas

 Now, as the trajectories of $\lambda(.,\theta)$ are left continuous for any $\theta \in \Theta$, we can conclude that $\proba\text{-a.s.}$, the whole trajectories of $\lambda(.,\theta)$ and $\lambda(.,\theta^{*})$ coincide. We have thus, $\proba\text{-a.s.}$, 

 \bea
 \nu_\alpha - \nu_\alpha^{*} &=& \sum_{\beta \in \mathbf{I}}\int_{-\infty}^t{\l(c_{\alpha\beta}^{*}e^{-a_{\alpha\beta}^{*}(t-s)}-c_{\alpha\beta}e^{-a_{\alpha\beta}(t-s)}\r)d\bar{N}_s^\beta},
 \label{eqNu}
 \eea
but as the left hand side is constant, the right hand side, which is a jump process, must have jumps of size zero, and thus $C = C^{*}$. It is then easy to see that the right hand side is now $C^1$ in the variable $t$, and its derivative must be zero. Taking the derivative on both side this gives 

 \beas 
 0 &=& \sum_{\beta \in \mathbf{I}}\int_{-\infty}^t{\l(c_{\alpha\beta}^{*}a_{\alpha\beta}^{*}e^{-a_{\alpha\beta}^{*}(t-s)}-c_{\alpha\beta}a_{\alpha\beta}e^{-a_{\alpha\beta}(t-s)}\r)d\bar{N}_s^\beta}.
 \eeas

Provided that the $c_{\alpha\beta}$ are non zeros, we get that $A = A^{*}$. Finally the right side of (\ref{eqNu}) vanishes and $\nu = \nu^{*}$. We now turn to the proof of the second part of the lemma. [B4] holds true if the Fisher information matrix $\Gamma = \sum_{\alpha \in \mathbf{I}}\esp \l[\frac{1}{\bar{\lambda}^\alpha(t,\theta^{*})} \partial_\theta \bar{\lambda}^\alpha(t,\theta^{*})\partial_\theta \bar{\lambda}^\alpha(t,\theta^{*})^T\r]$ is positive definite. Let us rewrite the elements of $\theta$ in the following order : First $\begin{pmatrix} a_{11} & \cdots & a_{1d} & c_{11} & \cdots & c_{1d} & \nu_1 \end{pmatrix}$, the parameters involved in the writing of $\bar{\lambda}^1(.,\theta^{*})$, then those involved in the writing of $\bar{\lambda}^2(.,\theta^{*})$ and so on. Fix thus $\textbf{x} = \begin{pmatrix} \textbf{x}_1 & \cdots & \textbf{x}_d \end{pmatrix} \in (\reels_+^{2d+1})^d$, such that $\textbf{x}^T \Gamma \textbf{x} = 0$, and let us show that $\textbf{x} = 0$. As the expression of $\Gamma$ is a block diagonal matrix, we have for any $t \in \reels_+$ that 

 \beas
0 &=& \textbf{x}^T \Gamma \textbf{x} \\
&=& \sum_{\alpha \in \mathbf{I}}{\textbf{x}_\alpha^T \esp \l[\frac{1}{\bar{\lambda}^\alpha(t,\theta^{*})} \partial_\theta \bar{\lambda}^\alpha(t,\theta^{*})\partial_\theta \bar{\lambda}^\alpha(t,\theta^{*})^T\r] \textbf{x}_\alpha} \\
&=& \sum_{\alpha \in \mathbf{I}}{\esp \l[\frac{1}{\bar{\lambda}^\alpha(t,\theta^{*})} \l(\partial_\theta \bar{\lambda}^\alpha(t,\theta^{*})\textbf{x}_\alpha \r)^2 \r]}. 
 \eeas

As for the first part of the proof, it implies that for any $\alpha \in \mathbf{I}$ the following process is null almost surely :

\beas
\partial_\theta \bar{\lambda}^\alpha(.,\theta^{*})\textbf{x}_\alpha = 0   \text{   } \proba\text{-a.s.}
\eeas

In particular it admits derivatives and those derivatives have no jumps. Writing $\textbf{x}_\alpha = (\textbf{x}_{\alpha,a},\textbf{x}_{\alpha,c},\textbf{x}_{\alpha,\nu})$, by an immediate induction we get for any $n \geq 1$ and any $\alpha, \beta \in \mathbf{I}$

\beas
nc_{\alpha\beta}a_{\alpha\beta}^{n-1}\textbf{x}_{\alpha,a}^\beta- a_{\alpha\beta}^n\textbf{x}_{\alpha,c}^\beta &=& 0, 
\eeas
and thus

\beas
c_{\alpha\beta}\textbf{x}_{\alpha,a}^\beta- \frac{a_{\alpha\beta}}{n}\textbf{x}_{\alpha,c}^\beta  &=& 0.
\eeas

Taking $n \to +\infty$, we get $\textbf{x}_{\alpha,a}^\beta = 0 $ and thus $\textbf{x}_{\alpha,c}^\beta = 0$. Finally, this implies that 

\beas 
0 = \partial_\theta \bar{\lambda}^\alpha(.,\theta^{*})\textbf{x}_\alpha = \textbf{x}_{\alpha,\nu}, 
\eeas
and thus $\textbf{x}_\alpha =0 $. 

 \end{proof}
\bibliography{biblio}
\bibliographystyle{abbrv} 

\end{document}
